\NewDocumentCommand \boxsize {} {1.5em}
\RenewDocumentCommand \cA     {}     {\mathcal{A}}
\NewDocumentCommand   \cAh    {}     {\mathcal{A}_\hbar}
\NewDocumentCommand   \cI     {}     {\mathcal{I}}
\NewDocumentCommand   \Acoset {O{}g} {\cA_\hbar \Phi_{#1}(\cI_{#2})}
\NewDocumentCommand   \Uh     {}     {U_\hbar}
\NewDocumentCommand   \tfm    {}     {\widetilde{\fm}}
\RenewDocumentCommand \Whit   {}     {\mathsfup{Wh}}
\NewDocumentCommand   \Weyl   { m }  {\mathbf{A}_{#1}\hspace{0pt}}
\NewDocumentCommand   \up     {mm}   {\prescript{}{#1}{\uparrow}^{#2}}
\NewDocumentCommand   \Aadj   {ommo}
  {\underset{\IfValueTF{#1}{#1 \to #2}{#3 \to #4}\vphantom{k}}{A_{#2 #3}}}
\NewDocumentCommand   \liez   { og }
  {\IfValueTF{#1}{\lie{z}_{#1}(#2)}{\lie{z}(#2)}}
\RenewDocumentCommand \vS     {t~}
  {\IfBooleanTF{#1}{\widetilde{\mathscr{S}}}{\mathscr{S}}}
\NewDocumentCommand   \ul     {mo}
  {\IfNoValueTF{#2} {\underline{#1}} {\underline{#1}_{\raisebox{0.5ex}{$\scriptstyle #2$}}}}
\NewDocumentCommand \W   {}     {W\nb}
\NewDocumentCommand \hsl {O{2}} {$\fsl_{#1}$\nb}
\crefname{property}{property}{properties}
\crefname{condition}{condition}{conditions}
\title[Quantum Hamiltonian reduction of W-algebras and
category~\texorpdfstring{$\cO$}{O}]
{Quantum Hamiltonian reduction of W-algebras \\
and category~\texorpdfstring{$\mathbfcal{O}$}{O}}
\author{Stephen Morgan}
\address{Mathematical Sciences Institute, Australian National University,
Acton ACT 2061, Australia}
\email{\emailurl{smorgan@math.utoronto.ca}}
\urladdr{\httpurl{www.math.utoronto.ca/smorgan}}
\thanks{This material is partially based upon work supported by the National
Science Foundation under grant no.~0932078~000 while the author was in residence
at the Mathematical Sciences Research Institute in Berkeley, California during the Fall 2014 semester.}
\begin{document}

\begin{abstract}
We define a quantum version of Hamiltonian reduction by stages, producing a
construction in type~A for a quantum Hamiltonian reduction from the
\W-algebra $U(\fg,e_1)$ to an algebra conjecturally isomorphic to $U(\fg,e_2)$,
whenever $e_2 \ge e_1$ in the dominance ordering.
This isomorphism is shown to hold whenever $e_1$ is subregular, and in $\fsl_n$
for all $n \le 4$.

We next define embeddings of various categories~$\cO$ for the \W-algebras
associated to $e_1$ and $e_2$, amongst them the embeddings
$\cO(e_2,\fp) \hookrightarrow \cO(e_1,\fp)$, where $\fp$ is a parabolic
subalgebra containing both $e_1$ and $e_2$ in its Levi subalgebra.
\end{abstract}

\maketitle

\section{Introduction}

Let $\fg$ be a semi-simple complex Lie algebra with universal enveloping algebra
$U(\fg)$, and let $G$ be the simply-connected algebraic group satisfying
$\Lie(G) = \fg$.
To any nilpotent element $e \in \fg$, one can associate a certain non-commutative
algebra $U(\fg,e)$ known as the \emph{\W-algebra} associated to the
nilpotent~$e$.
There are several definitions of the \W-algebra depending on different choices
and parameters, but it is known that they are all equivalent up to isomorphism,
and depend only on the nilpotent orbit of~$e$ under the adjoint action of~$G$.

The definition of \W-algebras of primary use in this paper is as a \emph{quantum
Hamiltonian reduction} of the universal enveloping algebra $U(\fg)$ with respect
to a choice of nilpotent subalgebra~$\fm$ and character~$\chi$ thereof, both
derived from the nilpotent~$e$.
In short, given a Lie algebra $\fm$ coming from a good grading of~$\fg$ and the
character $\chi \in \fm^*$ associated to~$e$ under the identification $\fg \simeq \fg^*$
given by the Killing form.
Considering the shift $\fm_\chi \coloneqq \set{y - \chi(y) \st y \in \fm}$, the \W-algebra can
be defined as the algebra of invariants in the quotient $U(\fg) \big/ U(\fg)
\fm_\chi$ under the adjoint action of~$\fm$;
i.e.~$U(\fg,e) \coloneqq \smash{\pp[\big]{U(\fg) \big/ U(\fg) \fm_\chi}^\fm}$.
Since all objects involved are filtered by the Kazhdan filtration, the
\W-algebra is itself a filtered algebra.
Taking the associated graded algebra yields the ring of functions on the Slodowy
slice $\vS_\chi \subseteq \fg^*$, and quantum Hamiltonian reduction reduces to ordinary
Hamiltonian reduction of Slodowy slices \cite{GG:QuantSlod}.

With this framework in mind, one can ask whether this quantum Hamiltonian
reduction can be decomposed into steps, analogous to the classical construction
of \emph{Hamiltonian reduction by stages}.
In particular, given a pair of \W-algebras defined by quantum Hamiltonian
reduction, when can an intermediate reduction between the two be found which
commutes with the original reductions up to isomorphism, as in
\cref*{fig:IntRedDiag}.

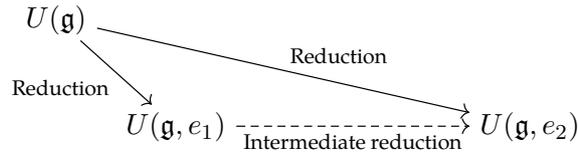
\begin{figure}[th!]
\begin{equation*}
\begin{tikzcd}[column sep=tiny]
U(\fg)
  \ar{dr}[swap]{\text{Reduction}}
  \ar{drrr}{\text{Reduction}} \\
 & U(\fg,e_1) \ar[dashed]{rr}[swap]{\text{Intermediate reduction}}
 &\hspace{6em} & U(\fg,e_2)
\end{tikzcd}
\end{equation*}
\caption{Reduction of W-algebras by stages.}
\label{fig:IntRedDiag}
\end{figure}

In \cref{chap:TypeA}, we give a partial answer to this question in type~A.
We first present a construction using the combinatorics of \emph{pyramids},
which for any pair of nilpotent elements $e_1, e_2 \in \fsl_n$, where $e_2$ covers
$e_1$ in the dominance ordering, produces an intermediate reduction from the
\W-algebra $U(\fg,e_1)$ in type~A to a certain algebra associated to $e_2$.
We conjecture that this algebra is isomorphic to the \W-algebra $U(\fg,e_2)$
(\cref{conj:typeAred}), and present a proof in known cases.

In \cref{chap:CatOWalg}, we turn our attention to the representation theory of
\W-algebras.
This is a subject which has been widely studied, and a number of connections to
the representation theory of the Lie algebra $\fg$ itself have been found.
The construction of quantum Hamiltonian reduction by stages produces a
$(U(\fg,e_1), U(\fg,e_2))$\nb-bimodule for any pair of nilpotents~$e_1$ and
$e_2$ as above.
This in turn provides a pair of adjoint functors
$\Mod{U(\fg,e_1)} \leftrightarrows \Mod{U(\fg,e_2)}$ for any such pair.
A modification of an argument of Loseu \cite{Los:CatOWAlg} can be used to
produce embeddings of the corresponding categories
$\cO(e_2,\fp) \hookrightarrow \cO(e_1,\fp)$, whenever $e_1 \le e_2$.

%In particular, a category equivalence of Mili\v{c}i\'{c} and Soergel \cite{MS:CompWhit}
%combined with one of Loseu \cite{Los:CatOWAlg} relates the singular blocks of
%the classical \ac{BGG} category~$\cO$ to the regular blocks of categories~$\cO$
%for compatible \W-algebras:
%$\smash{\cO_{\xi(\mu)}} \simeq \smash{\cO(e)_{\xi(0)}}$\todo{Decide on W-algebra $\cO$
%notation} for compatible~$\mu$ and~$e$, where $\xi$ is the usual Harish-Chandra
%homomorphism.

%With this equivalence in mind, we seek to study functors between the different
%categories $\cO(e)$ for varying~$e$, inspired by the translation functors in the
%\ac{BGG} category~$\cO$.
%

\subsection*{Acknowledgements}
I would like to thank my former PhD adviser J.~Kamnitzer for suggesting this
problem and providing many useful ideas.
I would also like to thank C.~Dodd for many fruitful discussions, and
R.~Bezrukavnikov, Zs.~Dancso, M.~Gualtieri, L.~Jeffrey, Y.~Karshon, A.~Licata,
I.~Loseu, and V.~Toledano-Laredo for comments and suggestions.

\section{W-algebras and quantum Hamiltonian reduction}

%%fakesubsection
We first recall the definition of \W-algebras via quantum Hamiltonian reduction.
Let $\fg$ be a semisimple Lie algebra over $\CC$, and let $e \in \fg$ be a chosen
nilpotent element.
By the Jacobson--Morozov theorem any non-zero $e$ be completed to an \hsl-triple
$(e,h,f)$.
The semi\-simple element~$h$ determines a $\ZZ$\nb-grading of the Lie
algebra~$\fg$ by declaring $\fg = \bigoplus_{j \in \ZZ} \fg_j$, where
$\fg_j = \set{x \in \fg \st [h,x] = j x}$.
This grading satisfies the following list of useful properties, where
$\liez[\fg]{e}$ is the centraliser of $e$ in $\fg$, and $\killing{\cdot}{\cdot}$
is the Killing form:
\setlength{\multicolsep}{1ex}
\begin{enumerate}[
  label=(\textsf{GG}\arabic*),
  ref=\textsf{GG}\arabic*,
  labelindent=\parindent,
  leftmargin=*]
\item \label[property]{GGprop1}
  $e \in \fg_2$,
\item \label[property]{GGprop2}
  $\ad e \colon \fg_j \to \fg_{j+2}$ is injective for $j \le -1$,
\item \label[property]{GGprop3}
  $\ad e \colon \fg_j \to \fg_{j+2}$ is surjective for $j \ge -1$,
\item \label[property]{GGprop4}
  $\liez[\fg]{e} \subseteq \bigoplus_{j \ge 0} \fg_j$,
\item \label[property]{GGprop5}
  $\killing{\fg_i}{\fg_j} = 0$ unless $i+j=0$,
\item \label[property]{GGprop6}
  $\dim \liez[\fg]{e} = \dim \fg_0 + \dim \fg_1$.
\end{enumerate}

It is a well-known result that any grading which satisfies
\crefrange*{GGprop1}{GGprop3} will necessarily satisfy all of them
(and even more strongly that \cref*{GGprop2,GGprop3} are equivalent for any
$\ZZ$\nb-grading).
This motivates the following \namecref{def:GG}, which provides a generalisation
of the gradings coming from \hsl-triples.

\begin{defn} \label{def:GG}
A $\ZZ$\nb-grading of~$\fg$ is called a \emph{good grading} for the nilpotent $e$
if it satisfies \crefrange*{GGprop1}{GGprop3}.
A good grading which comes from an \hsl-triple containing~$e$ is called a
\emph{Dynkin grading}.
A good grading which vanishes in odd degree is called an \emph{even grading}.
\end{defn}

Note that although all Dynkin gradings are good, there exist good gradings which
are not Dynkin: these non-Dynkin good gradings will be important for our work.
From this point on, fix a good grading of the Lie algebra~$\fg$.

The space $\fg_{-1}$ has a natural symplectic form $\omega$ given by
$\omega(x,y) \coloneqq \killing{e}{[x,y]}$.
Choosing a Lagrangian subspace $\fl \subseteq \fg_{-1}$ with respect to this form, one
can define a \emph{Premet subalgebra}
$\fm \coloneqq \fl \oplus \smash{\bigoplus_{j \le -2}} \fg_j$.
Premet subalgebras enjoy a number of properties we record for future reference.
Let $\vO_e$ be the adjoint orbit through $e$.
\begin{enumerate}[
  label=($\chi$\arabic*),
  ref=$\chi$\arabic*,
  labelindent=\parindent,
  leftmargin=*]
\item \label[property]{PremNil}
    $\fm$ is an ad-nilpotent subalgebra of~$\fg$,
\item \label[property]{PremDim}
    $\dim \fm = \frac{1}{2} \dim \vO_e$,
\item \label[property]{PremZ}
    $\fm \cap \liez[\fg]{e} = 0$,
\item \label[property]{PremChar}
    $\chi \coloneqq \killing{e}{\cdot}$ restricts to a character of~$\fm$,
\end{enumerate}

Given a Lie algebra~$\fm$ with character~$\chi$, one can define the shifted Lie
algebra $\fm_\chi \coloneqq \set{y - \chi(y) \st y \in \fm}$.
With this in hand, we can define the \W-algebra.

\begin{defn} \label{def:Walg}
Let $e \in \fg$ be a nilpotent element with a chosen good grading and Lagrangian
subspace $\fl \subseteq \fg_{-1}$, and let $\fm$ be the associated Premet subalgebra.
The \emph{(finite) \W-algebra} $U(\fg,e)$ is the set of invariants in the
quotient $U(\fg) \big/ U(\fg) \fm_\chi$, under the adjoint action of $\fm$:
\begin{equation*}
U(\fg,e) %\coloneqq \pp[\big]{Q_\chi}^\fm
  \coloneqq \pp[\big]{U(\fg) \big/ U(\fg)\fm_\chi}^\fm
  = \set[\big]{\overline{u} \in U(\fg) \big/ U(\fg)\fm_\chi
      \st [a,u] \in U(\fg)\fm_\chi \;\;\forall \;a \in \fm }.
\end{equation*}
%The quotient $Q_\chi \coloneqq U(\fg) \big/ U(\fg)\fm_\chi$ is known as the \emph{generalised
%Gel'fand--Graev module}.
\end{defn}

\subsection{Slodowy slices}
\label{sec:SlodowySlices}

For any nilpotent element $e \in \fg$, one can construct an \hsl-triple
$(e,h,f)$ by the Jacobson--Morozov theorem.
In fact, any such pair of triples $(e,h,f)$ and $(e',h',f')$ for which
$\vO_e = \vO_{e'}$ are conjugate by the adjoint action, and so in particular one
can speak unambiguously of ``the \hsl-triple associated to~$e$”.
If one additionally has a good grading~$\Gamma$ for the nilpotent~$e$, the \hsl-triple
can be chosen to be $\Gamma$\nb-graded, so that $e$, $h$ and~$f$ lie in graded
degrees 2, 0 and~-2, respectively.

Associated to an \hsl-triple $(e,h,f)$ is a certain subvariety $\vS_e \subseteq \fg$
known as the \emph{Slodowy slice}.
It is an affine space which forms a transverse slice to the nilpotent orbit
$\vO_e$, and is defined as a translate of the centraliser of $f$.
We shall usually deal with the Slodowy slice in the dual Lie algebra~$\fg^*$,
transported via the Killing isomorphism $\kappa \colon \fg \isoto \fg^*$.
\begin{align*}
\vS_e & \coloneqq e + \liez[\fg]{f} \subseteq \fg   &
\vS_\chi & \coloneqq \chi + \pp[\big]{\fg / [\fg,f]}^* = \kappa(\vS_e) \subseteq \fg^*
\end{align*}
The Slodowy slice $\vS_\chi$, and hence $\vS_e$, inherits a natural Poisson
structure from the variety~$\fg^*$ equipped with the Lie--Poisson bracket.

Slodowy slices are of independent interest, but for our purposes we consentrate
on their relation to \W-algebras.
Gan and Ginzburg proved in \cite{GG:QuantSlod} that $U(\fg,e)$ has the structure
of a filtered algebra, and that the corresponding associated graded algebra
$\gr U(\fg,e)$ is the ring of functions on the Slodowy slice
$\Cpoly[\big]{\vS_\chi}$.
Further, taking $M$ to be the algebraic group with Lie algebra~$\fm$, one can
consider the moment map of the co-adjoint action of $M$ on $\fg^*$:
this is just restriction map $\mu \colon \fg^* \to \fm^*$.
The Slodowy slice $\vS_\chi$ can be expressed as a Hamiltonian reduction
of~$\fg^*$:
\begin{equation*}
\vS_\chi \simeq \fg^* \qq{\chi} M \coloneqq \mu^{-1}(\chi) / M.
\end{equation*}
Expressed in terms of rings of functions, and taking $I_\chi$ to be the ideal of
functions which vanish on $\mu^{-1}(\chi)$, this takes the form
\begin{equation}
\Cpoly[\big]{\vS_\chi} \simeq \pp[\big]{\Cpoly{\fg^*} \big/ I_\chi}^M.
\label{eq:SlodowyRed}
\end{equation}

The Poisson structure which $\vS_\chi$ inherits as a Hamiltonian reduction of
$\fg^*$ agrees with the Poisson structure it inherits as a subvariety of $\fg^*$
(cf.~\cite[Section~3.4]{GG:QuantSlod}).
The similarity between \cref{def:Walg,eq:SlodowyRed} has led to \cref{def:Walg}
to be referred to as a \emph{quantum Hamiltonian reduction}, by analogy.
This can be formalised in the language of deformation quantisations.

\subsection{Deformation quantisations and quantum Hamiltonian reduction}

\begin{defn}
\label{def:DefQuant}
Let $A$ be a Poisson algebra with Poisson bracket $\poibr{\cdot,\cdot}$.
A \emph{deformation quantisation} of $A$ is an associative unital product
$\star \colon A \otimes A \to \power{A}{\hbar}$ such that, when extended
$\Cpower{\hbar}$-bilinearly, satisfies the following conditions:
\begin{enumerate}
\item \label[condition]{cond:DefQuantProd}
      $\star$ is an associative binary product on $\power{A}{\hbar}$, continuous
      in the $\hbar$-adic topology;
\item \label[condition]{cond:DefQuantMult}
      $f \star g = fg + O(\hbar)$ for all $f,g \in A$;
\item \label[condition]{cond:DefQuantPoisson}
      $f \star g - g \star f = \poibr{f,g}\hbar + O(\hbar^2)$ for all $f,g \in A$.
\end{enumerate}
Writing $f \star g = \sum_{k\ge0} D_k(f,g) \hbar^{k}$, we shall further require that
$\star$ be a \emph{differential} deformation quantisation, that is one satisfying
the additional condition:
\begin{enumerate}
\setcounter{enumi}{3}
\item for each $k$, $D_k(\cdot,\cdot)$ is a bidifferential operator of order at
      most $k$ in each variable.
\end{enumerate}

The vector space $\power{A}{\hbar}$ equipped with the multiplication $\star$
shall be denoted $\cAh$, and is often referred to as a deformation
quantisation itself.
If~$X$ is a Poisson variety and~$A$ is its ring of functions, we often say that
$\cAh$ is a deformation quantisation of~$X$.
\end{defn}

The product $\star$ can also be used to introduce a new associative product on
$A$ through the projection $\power{A}{\hbar} \to A$, given by sending $\hbar$
to~$1$.
More concretely, define the product $\circ \colon A \otimes A \to A$ by
$f \circ g \coloneqq \smash{\sum_{k\ge0} D_k(f,g)}$.
Let the vector space $A$ equipped with this new algebra structure be
denoted~$\cA$.
By abuse of terminology, the algebra~$\cA$ is often referred to as a deformation
quantisation of~$A$.

By results of Gan and Ginzburg \cite{GG:QuantSlod} and Loseu
\cite{Los:QuantSymplWalg}, the Rees algebra of the \W-algebra considered with
the Kazhdan filtration, denoted $\Uh(\fg,e)$, is a deformation quantisation of
the ring of functions of the Slodowy slice $A = \Cpoly{\vS_\chi}$.
The \W-algebra $U(\fg,e)$ itself is then just $\cA$.

As a special case, we consider the $\Cpower{\hbar}$-extended universal
enveloping algebra $\Uh(\fg)$, which is a deformation quantisation of
$\Cpoly{\fg^*}$.
Consider the vector space $\fg_\hbar \coloneqq \fg \otimes \Cpower{\hbar}$ equipped with the
Lie bracket $[\cdot,\cdot]_\hbar$, defined as $[x,y]_\hbar \coloneqq [x,y] \hbar$ for
$x,y \in \fg$ and extended $\Cpower{\hbar}$-bilinearly.
The algebra $\Uh(\fg)$ is then the universal enveloping algebra of
$\fg_\hbar$, and can be concretely presented as the tensor algebra
$T(\fg_\hbar)$ modulo the relation $xy - yx = [x,y]_\hbar$.
This algebra $\Uh(\fg)$ is just the Rees algebra of $U(\fg)$ considered with the
{\scshape \MakeLowercase{PBW}} filtration.

Assume now that $G$ is an algebraic group which acts on $\cAh$ by
$\Cpower{\hbar}$\nb-algebra automorphisms, and preserves the grading.
This induces an action of $\fg$ on $\cAh$ by derivations, and we denote the
derivation corresponding to $\xi \in \fg$ by $\xi_\cA$.
Let there furthermore exist a \emph{quantum comoment map} for the action
of~$G$ on $\cAh$, i.e.~a linear map $\Phi \colon \fg \to \cAh$, which is
$G$\nb-equivariant and satisfies
$\smash{\tfrac{1}{\hbar}} \liebr{\Phi(\xi),\cdot} = \xi_\cA$.
It shall be useful to extend this $\Cpower{\hbar}$-linearly to a map
$\Phi \colon \Uh(\fg) \to \cAh$.

\begin{defn}
Let $\cAh$ be a deformation quantisation on which $G$ acts with quantum
comoment map $\Phi$.
Let $\gamma \in \fg^*$ be fixed under the co-adjoint action of~$G$, and define $\cI_\gamma$
as the two-sided ideal in $\Uh(\fg)$ generated by
$\fg_{\hbar,\gamma} \coloneqq \set{x - \gamma(x)\hbar \st x \in \fg}$.
The \emph{quantum Hamiltonian reduction} of $\cAh$ at $\gamma$ under the action
of~$G$ is
\begin{equation*}
\cAh \qqq{\gamma} G \coloneqq \pp[\big]{\cAh \big/ \Acoset{\gamma}}^G.
\end{equation*}
This has a natural algebra structure with multiplication given by
\begin{equation*}
\pp{a + \Acoset{\gamma}}\pp{b + \Acoset{\gamma}} = ab + \Acoset{\gamma}.
\end{equation*}
\end{defn}

\begin{rem}
Let $\cAh$ be a deformation quantisation of the Poisson variety $X$, and
let $G$ act on~$\cAh$ with quantum comoment map $\Phi$.
Assume further that the action of~$G$ on~$\cAh$ is induced by an action of~
$G$ on~$X$.
Then the quantum comoment map $\Phi$ induces a classical moment map
$\mu \colon X \to \fg^*$, and for any $\gamma \in \fg^*$ fixed under the co-adjoint
action of $G$, the quantum Hamiltonian reduction $\cAh \qqq{\gamma} G$ is a
deformation quantisation of the classical Hamiltonian reduction $X \qq{\gamma} G$.
\end{rem}

We shall now give a justification for calling \cref{def:Walg} the definition of
the \W-algebra by quantum Hamiltonian reduction.
Choosing a Premet subalgebra $\fm$ for~$e$ naturally produces an algebraic
group~$M \subseteq G$ by exponentiation, since $\fm$ is an ad-nilpotent subalgebra
(\labelcref{PremNil}).
This acts on $\fg^*$ by the restriction of the co-adjoint action, and on
$\Uh(\fg)$ by extending the adjoint action.
Furthermore, this action has a quantum comoment map
$\Phi \colon \fm \to \Uh(\fg)$ which comes from the natural inclusion of~$\fm$
into~$\fg$, and extends to the natural inclusion of $\Uh(\fm)$ into
$\Uh(\fg)$.

Since $\chi \in \fm^*$ is a character of $\fm$ (\labelcref{PremChar}) it is fixed
under the co-adjoint action of~$M$, and so we can consider the quantum
Hamiltonian reduction $\Uh(\fg) \qqq{\chi} M$.
Since $M$ is a unipotent algebraic group, invariants under adjoint action of $M$
are completely equivalent to invariants under the adjoint action of $\fm$.
As a result,
\begin{equation*}
\Uh(\fg) \qqq{\chi} M
 \coloneqq \pp[\big]{\Uh(\fg) \big/ \Uh(\fg)\Phi(\cI_\chi)}^M
 = \pp[\big]{\Uh(\fg) \big/ \Uh(\fg)\cI_\chi}^\fm,
\end{equation*}
and passing through the projection $\hbar \mapsto 1$ results in the definition of
the \W-algebra given in \cref{def:Walg}.
We can therefore without ambiguity denote the above by $U(\fg) \qqq{\chi} \fm$.

\subsection{Quantum Hamiltonian reduction by stages}
\label{sec:ReductionStages}

Consider the Slodowy slice associated to the zero nilpotent,
$\vS_0 = \fg^*$.
\Cref*{eq:SlodowyRed} can then be restated in the following way: the Slodowy
slice~$\vS_\chi$ can be expressed as a Hamiltonian reduction of $\vS_0$.
In order to answer for which other pairs of nilpotent elements $e_1$ and~$e_2$
this can be done, we need to introduce the machinery of \emph{Hamiltonian
reduction by stages}.

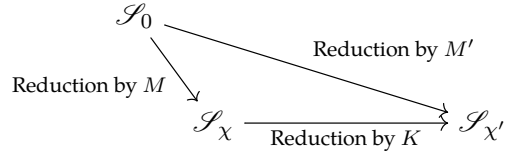
\begin{figure}[ht]
\centering
%% Diagram demonstrating Hamiltonian reduction by stages between Slodowy slices

\begin{tikzcd}[column sep=tiny]
\vS_0
  \ar{dr}[swap]{\text{Reduction by } M}
  \ar{drrr}{\text{Reduction by } M'} \\
& \vS_\chi \ar{rr}[dashed, swap]{\text{Reduction by } K}
& \hspace{5em} & \vS_{\chi'}
\end{tikzcd}
\caption{Hamiltonian reduction of Slodowy slices by stages}
\label{eq:SlodowyReductions}
\end{figure}

Reduction by stages is a technique for decomposing a Hamiltonian reduction into
a sequence of smaller reductions.
The general theory is quite highly developed (cf.~\cite{MMO:HamRed}), but we
shall be interested in the specific case of reduction by a semidirect product.

Let $G \simeq H \rtimes K$ be an algebraic group which is a semidirect product of the
closed subgroups $H$ and~$K$, with $H$ normal in~$G$.
Let $X$ be a Poisson variety with a Hamiltonian action of $G$ with equivariant
moment map $\mu \colon X \to \fg^*$.
Let $\gamma \in \fg^*$ be a regular value of~$\mu$, which is identified with $(\eta,\kappa)$
under the decomposition $\fg^* = \fh^* \times \fk^*$.
Under certain mild conditions on the subgroup $K$ and the values $\eta$
and~$\kappa$, there exists an isomorphism of Poisson varieties
\begin{equation*}
X \qq{\gamma} G \simeq \pp{X \qq{\eta} H} \qq{\kappa} K,
\end{equation*}
where all the induced actions are well-defined and Hamiltonian.

With this in mind, we seek to define an analogous  notion of \emph{quantum
Hamiltonian reduction by stages}.

\begin{thm}
\label{thm:2StageReduction}
Let $\cAh$ be a deformation quantisation, and let $G \simeq H \rtimes K$ be an
algebraic group which acts on it with quantum comoment map
$\Phi \colon \fg \to \cAh$.
Let $\gamma \in \fg^*$ be an invariant under the co-adjoint action of $G$, which
decomposes as $\gamma = (\eta,\kappa)$ under the identification
$\fg^* \simeq \fh^* \times \fk^*$.
Then there exists a natural action of $K$ on $\cAh \qqq{\eta} H$ with an
induced quantum comoment map $\Phi_K \colon \fk \to \cAh \qqq{\eta} H$, and there
is a natural homomorphism of algebras
\begin{equation*}
\pp[\big]{\cAh \qqq{\eta} H} \qqq{\kappa} K \to \cAh \qqq{\gamma} G.
\end{equation*}
\end{thm}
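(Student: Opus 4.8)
The plan is to unwind both sides as subquotients of $\cAh$ and exhibit the map directly. First I would establish that $K$ acts on $\cAh \qqq{\eta} H = \pp{\cAh / \Acoset[H]{\eta}}^H$. Because $H$ is normal in $G$, the subspace $\Acoset[H]{\eta}$ is $K$-stable: $K$ acts on $\Uh(\fh)$ by restriction of the adjoint action of $G$, and since $\eta$ is the $\fh^*$-component of a $G$-invariant functional it is fixed under the coadjoint action of $K$ on $\fh^*$, so $K$ preserves $\fh_{\hbar,\eta}$ and hence the left ideal it generates. Thus $K$ acts on the quotient $\cAh / \Acoset[H]{\eta}$, and this action commutes with that of $H$ (again by normality, after checking the semidirect-product relations), so it descends to the $H$-invariants $\cAh \qqq{\eta} H$ by $\Cpower{\hbar}$-algebra automorphisms preserving the grading.

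Next I would produce the quantum comoment map $\Phi_K$. The inclusion $\fk \hookrightarrow \fg$ composed with $\Phi \colon \fg \to \cAh$ gives a map $\fk \to \cAh$; I would check it lands in a $K$-equivariant way in $\cAh / \Acoset[H]{\eta}$ and takes values in the $H$-invariants. The latter point is the crux of this step: for $\xi \in \fk$ and $a \in \fh$, the element $\tfrac1\hbar\liebr{\Phi(\xi),\Phi(a)} = \Phi([\xi,a])$, and $[\xi,a] \in \fh$ since $\fh$ is an ideal; modulo $\Acoset[H]{\eta}$ one has $\Phi(a) \equiv \eta(a)\hbar$, a scalar, and a short computation with the semidirect-product bracket shows $\liebr{\Phi(\xi), \Acoset[H]{\eta}} \subseteq \Acoset[H]{\eta}$, so the class of $\Phi(\xi)$ normalises the ideal and therefore lies in the $H$-invariants (equivalently the $\fh$-invariants, using that $H$ is unipotent as in the \W-algebra discussion). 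One then checks $\tfrac1\hbar\liebr{\Phi_K(\xi),\cdot} = \xi_\cA$ on $\cAh \qqq{\eta} H$ and $K$-equivariance, both inherited from the corresponding properties of $\Phi$.

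Finally I would construct the homomorphism. On the level of $\cAh$, the composite $\cAh \to \cAh/\Acoset[H]{\eta} \to \pp{\cAh \qqq{\eta} H}\big/\pp{\cAh \qqq{\eta} H}\Phi_K(\cI_\kappa)$ sends the generators of $\cI_\gamma$ — namely $x - \gamma(x)\hbar$ for $x \in \fg = \fh \oplus \fk$ — into the relevant left ideals: the $\fh$-part dies in the first quotient, the $\fk$-part dies by definition of the second. Hence it factors through $\cAh / \Acoset[]{\gamma}$, and restricting to $G$-invariants (which map into $K$-invariants of the $H$-invariants, since $G$ is generated by $H$ and $K$) yields the desired map $\pp{\cAh \qqq{\eta} H} \qqq{\kappa} K \to \cAh \qqq{\gamma} G$. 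Multiplicativity is immediate from the formula $(a + \cdot)(b+\cdot) = ab + \cdot$ on each reduction. The main obstacle I anticipate is the bookkeeping in the second step — verifying that $\Phi_K$ is well-defined, i.e.\ that $\Phi(\fk)$ normalises $\Acoset[H]{\eta}$ and its image is genuinely $H$-invariant rather than merely $H$-stable — since this is exactly where the normality of $H$ and the $G$-invariance (as opposed to mere $K$-invariance) of $\eta$ are both used, and it is the step with no classical shortcut available.
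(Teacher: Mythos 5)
Your treatment of the first two steps — the $K$-action on $\cAh \qqq{\eta} H$ and the quantum comoment map $\Phi_K$ — is essentially the paper's argument: normality of $H$ together with $G$-invariance of $\gamma$ (which forces $\eta([\fk,\fh]) = 0$) are exactly what make the $K$-action descend to $H$-invariants and the image of $\Phi_K$ land in $H$-invariants, and you identify these correctly. (One small wording issue: the $H$- and $K$-actions do not commute; normality gives $\Ad_h \Ad_k = \Ad_k \Ad_{k^{-1}hk}$, and this is what makes the $K$-action preserve $H$-invariance.)

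The final step, however, has a genuine gap. The composite you write, $\cAh \to \cAh/\Acoset[H]{\eta} \to \pp[\big]{\cAh \qqq{\eta} H}\big/\pp[\big]{\cAh \qqq{\eta} H}\Phi_K(\cI_\kappa)$, is not well-defined: $\cAh \qqq{\eta} H$ is a \emph{subalgebra} of $\cAh/\Acoset[H]{\eta}$ (the $H$-invariants), not a quotient, so there is no natural second arrow. Moreover, even granting such a map, the conclusion you draw is reversed: showing that a map out of $\cAh$ kills $\cI_\gamma$ produces a homomorphism \emph{out of} $\cAh/\Acoset{\gamma}$, and hence out of $\cAh \qqq{\gamma} G$, whereas the theorem asserts a homomorphism \emph{into} $\cAh \qqq{\gamma} G$ from the two-stage reduction. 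The correct construction runs the other way: given a class in $\pp[\big]{\cAh \qqq{\eta} H} \qqq{\kappa} K$, lift it to an element of $\cAh \qqq{\eta} H$, include into $\cAh/\Acoset[H]{\eta}$ via the natural inclusion $\iota_H$, and project to $\cAh/\Acoset{\gamma}$ — legitimate because $\Acoset[H]{\eta} \subseteq \Acoset{\gamma}$, as $\eta = \res{\gamma}{\fh}$. One then checks that the result is independent of the chosen lift (using that $\Phi_K(\cI_\kappa)$ maps into $\Acoset{\gamma}$, since $\kappa = \res{\gamma}{\fk}$) and that the image lies in the $G$-invariants $\cAh \qqq{\gamma} G$ when the input is $K$-invariant. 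The underlying intuition of your final paragraph — both sides are subquotients of $\cAh$ and the map is the identity on representatives — is exactly right, but the derivation as written constructs at best a map in the wrong direction, and its intermediate step does not exist.
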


\begin{proof}
We first show that the reduced spaces are properly defined and there exists an
action of $K$ on $\cAh \qqq{\eta} H$ with a quantum comoment map denoted~$\Phi_K$.
First note that restricting the action of $G$ yields an action of $H$ on $\cAh$,
and the restriction $\Phi_H \coloneqq \res{\Phi}{\fh} \colon \fh \to \cAh$ is
$H$\nb-equivariant due to $G$\nb-equivariance and the normality of~$H$.
Further, $\eta$ is fixed by $G$ and hence by $H$, so the reduction
$\cAh \qqq{\eta} H$ is well-defined.

We next define the action of $K$ on $\cAh \qqq{\eta}H$.
Recall that
\begin{equation}
\cAh \qqq{\eta} H
  = \set[\big]{a + \Acoset[H]{\eta} \st \Ad_h (a) \in a + \Acoset[H]{\eta}
  \;\;\forall \; h \in H}.
\label{eq:QHR1stage}
\end{equation}
Define the action of $K$ by
$\Ad_k \pp[\big]{a + \Acoset[H]{\eta}} \coloneqq \Ad_k(a) + \Acoset[H]{\eta}$.
This is independent of the choice of representative, as can be seen by the
following calculation, taking $k \in K$ and $x \in \fh$:
\begin{align*}
\Ad_k \Phi(x - \eta(x)\hbar)
  & = \Phi( \Ad_k x - \eta(x)\hbar )
    && \text{$\Phi$ is $G$- and so $K$\nb-equivariant} \\
  & = \Phi( \Ad_k x - \Ad^*_k(\eta)(x)\hbar )
    && \eta \in \smash{(\fh^*)^G} \subseteq \smash{(\fh^*)^K} \\
  & = \Phi ( \Ad_k x - \eta(\Ad_k x)\hbar ) \in \Phi(\cI_\eta)
    && H \trianglelefteq G \text{ and so } \Ad_k x \in \fh
\end{align*}
That $\Ad_k \pp[\big]{a + \Acoset[H]{\eta}}$ remains $H$\nb-invariant again follows
from the normality of~$H$, as
$\Ad_h \Ad_k (\overline{a})
  = \Ad_{hk} (\overline{a})
  = \Ad_{kh'} (\overline{a})
  = \Ad_k \Ad_{h'} (\overline{a})
  = \Ad_k (\overline{a})$.

\parpic(10em,13em)[r]{%% Diagram demonstrating quantum Hamiltonian comoments

\begin{tikzcd}[row sep=scriptsize, column sep=small]
\fh \ar[swap, hookrightarrow]{dd} \ar{ddr}{\Phi_H}                                  \\
\phantom{A}\\
\fg \ar[two heads, xshift=.5ex]{dd}
      \ar[yshift=.6ex]{r}[inner sep=.2ex]{\Phi_{\hphantom{\eta}}}
      \ar[yshift=-.6ex]{r}[swap]{\Phi_\eta}
  & \cAh \ar[two heads]{d}{\pi_H}      \\
  %& \cAh \ar[two heads, end anchor={[yshift=-.2ex]}]{d}{\pi_H}      \\
  & \cAh \big/ \Acoset[H]{\eta}                                      \\
\fk \ar[hookrightarrow, xshift=-.5ex]{uu} \ar{r}{\Phi_K}
  & \cAh \qqq{\eta} H  \ar[hookrightarrow]{u}[swap]{\iota_H}
  %& \cAh \qqq{\eta} H  \ar[hookrightarrow, start anchor={[yshift=-.2ex]},
  %                            end anchor={[yshift=1ex]}]{u}[swap]{\iota_H}
\end{tikzcd}
}
Lastly, we need to exhibit a quantum comoment map
$\Phi_K \colon \fk \to \cAh \qqq{\eta} H$.
We first define an $\eta$-twisted quantum comoment map, extending $\eta$ by zero on
$\fk$ and defining
\begin{align*}
\Phi_\eta \colon \fg & = \fh \rtimes \fk \to \cAh  && \\
\Phi_\eta(x) & \coloneqq \Phi(x) - \eta(x)\hbar                  &&
\end{align*}
To define the quantum comoment map $\Phi_K$, consider $\fk$ as the quotient
$\fg/\fh$, choose an arbitrary lift from $\fk$ to $\fg$, and apply the function
$\pi_H \circ \Phi_\eta$, where $\pi_H$ is the projection
$\pi_H \colon \cAh \to \cAh / \Acoset[H]{\eta}$.
To see this is well-defined, note that for any $y \in \fk$ and $x \in \fh$,
\begin{equation*}
\Phi_K(y) = \Phi_\eta(y + x) = \Phi(y) + \Phi(x - \eta(x)\hbar) \in \Phi(y) + \Acoset[H]{\eta}.
\end{equation*}
To see that the image of $\Phi_K$ lies within $H$\nb-invariants, note that the
semidirect product structure of $\fg$ guarantees that for $y \in \fk$ and $h \in H$
there exists an $x \in \fh$ such that $\Ad_h y = y + x$.
From this we can see that
\begin{equation*}
\Ad_h \Phi_K(y) = \Phi_\eta(\Ad_h y)
  = \Phi(y) + \Phi(x - \eta(x)\hbar) \in \Phi_K(y) + \Acoset[H]{\eta}.
\end{equation*}
That this map is $K$\nb-equivariant follows directly from the
$G$\nb-equivariance of~$\Phi$, and the quantum comoment condition
$\tfrac{1}{\hbar}\liebr{\Phi_K(\xi),\cdot} = \xi_{\cAh \qqq{\eta} H}$
follows from the corresponding condition for $\Phi$ and the above calculations
showing the action of $K$ is well-defined on $\cAh \qqq{\eta} H$.
Since $\kappa \in \fk^*$ is fixed by the action of~$G$, and hence by~$K$, we can
therefore talk sensibly about the two-stage reduction
$\pp{\cAh \qqq{\eta} H} \qqq{\kappa} K$.
Furthermore, the action of~$K$ on $\cAh \qqq{\eta} H$ descends to an action on
$\pp{ \cAh \qqq{\eta} H } \big/ \pp{ \cAh \qqq{\eta} H } \Phi_K(\cI_\kappa)$.

\begin{figure}[ht]
\centering
%% Diagram demonstrating quantum Hamiltonian reduction by stages

\begin{tikzcd}[column sep=-3em]
\cAh                   \ar[two heads]{dr}{\pi_H}
                       \ar[two heads, bend left]{dddrrr}{\pi_G}           \\
  & \cAh \big/ \Acoset[H]{\eta}   \ar[two heads]{ddrr}{\pi_G^H}              \\
\cAh \qqq{\eta} H         \ar[two heads]{dr}{\pi_K} \ar[hookrightarrow]{ur}{\iota_H}   \\
  & \pp{\cAh \qqq{\eta} H} \big/ \pp{\cAh \qqq{\eta} H} \Phi_K(\cI_\kappa)
                       \ar[dashed]{rr}{\widetilde{\varphi}}
    &\hspace{9em}& \cAh \big/ \Acoset{\gamma}                                \\
\pp{\cAh \qqq{\eta} H} \qqq{\kappa} K                \ar[hookrightarrow]{ur}{\iota_K}
                       \ar[dashed]{rr}{\varphi}
    && \cAh \qqq{\gamma} G  \ar[dashed]{ul}[swap]{\psi} \ar[hookrightarrow]{ur}{\iota_G}
\end{tikzcd}
\caption{Quantum Hamiltonian reduction by stages}
\label{fig:QHRstages}
\end{figure}
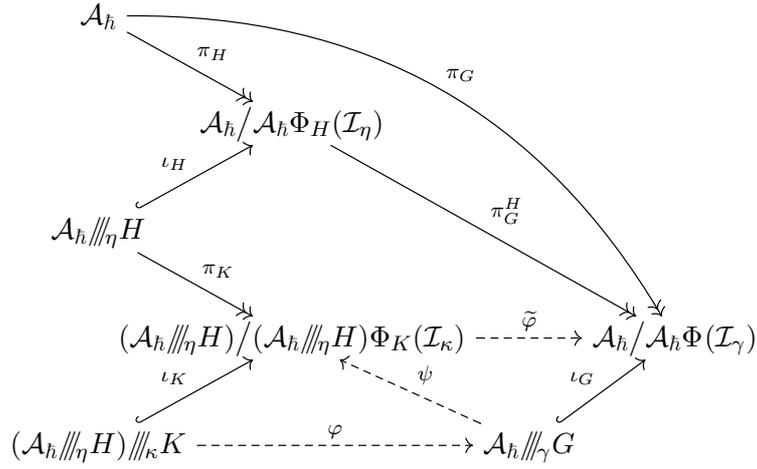

It remains to show that there is a map from the two-stage reduction
$\pp{\cAh \qqq{\eta} H} \qqq{\kappa} K$ to the one-shot reduction $\cAh \qqq{\gamma} G$.
To this end we shall construct the map~$\varphi$ from \cref{fig:QHRstages}.
Consider the maps
\begin{align*}
\widetilde{\varphi} & \colon
  \pp{\cAh \qqq{\eta} H} \big/ \pp{ \cAh \qqq{\eta} H} \Phi_K(\cI_\kappa)
    \to \cAh \big/ \Acoset{\gamma}                           \\
\varphi & \colon
  \pp{\cAh \qqq{\eta} H} \qqq{\kappa} K
    \to \cAh \qqq{\gamma} G,
\end{align*}
where $\widetilde{\varphi}$ is defined by first lifting to $\cAh \qqq{\eta} H$ and then
applying $\pi_G^H \circ \iota_H$, and~$\varphi$ is defined as the composition
$\widetilde{\varphi} \circ \iota_K$.
To show these are well-defined requires checking that~$\widetilde{\varphi}$ doesn't
depend on the lift chosen, and that the image of~$\varphi$ lies in $G$\nb-invariants.
These can both be checked by careful but straightforward calculations.
We have therefore constructed a homomorphism~$\varphi$ from the two-stage to the
one-shot reduction; it is merely the identity map suitably interpreted in the
appropriate cosets: \begin{equation*}
\varphi \pp[\big]{(a + \Acoset[H]{\eta}) + (\cAh \qqq{\eta} H) \Phi_K(\cI_\kappa)}
  = a + \Acoset{\gamma}.\qedhere
\end{equation*}
\end{proof}

\begin{cor}
\label{cor:2StageReductionIso}
If this homomorphism $\varphi$ induces an isomorphism
$\overline{\varphi} \colon \pp[\big]{A \qq {\eta} H} \qq{\kappa} K \isoto A \qq{\gamma} G$
of the corresponding Poisson algebras, then it is itself an isomorphism.
In particular, this holds if $\cAh$ is a deformation quantisation of a Poisson
manifold for which the classical reduction by stages hypotheses hold.
\end{cor}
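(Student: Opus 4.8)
The plan is to use that $\varphi$ is a homomorphism of $\Cpower{\hbar}$-algebras whose reduction modulo $\hbar$ is the Poisson map $\overline{\varphi}$, together with the general principle that a $\Cpower{\hbar}$-linear map between $\hbar$-torsion-free, $\hbar$-adically complete and separated $\Cpower{\hbar}$-modules which becomes an isomorphism after setting $\hbar = 0$ is itself an isomorphism. The first task is thus to check that the source $B \coloneqq \bigl(\cAh \qqq{\eta} H\bigr) \qqq{\kappa} K$ and the target $C \coloneqq \cAh \qqq{\gamma} G$ enjoy these module-theoretic properties. As recalled above, resting on \cite{GG:QuantSlod, Los:QuantSymplWalg}, the quantum Hamiltonian reduction of a deformation quantisation is again a deformation quantisation of the corresponding classical reduction; hence $C$ is a deformation quantisation of $A \qq{\gamma} G$, and, applying this twice --- first to the action of $H$ on $\cAh$, so that $\cAh \qqq{\eta} H$ is a deformation quantisation of $A \qq{\eta} H$, and then to the residual $K$-action with the comoment map $\Phi_{K}$ produced in \cref{thm:2StageReduction} --- the source $B$ is a deformation quantisation of $\bigl(A \qq{\eta} H\bigr)\qq{\kappa} K$. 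Since the underlying $\Cpower{\hbar}$-module of a deformation quantisation of an algebra $D$ is $\power{D}{\hbar}$, which is $\hbar$-torsion-free and $\hbar$-adically complete and separated, both $B$ and $C$ have these properties, with $B/\hbar B \cong \bigl(A \qq{\eta}H\bigr)\qq{\kappa}K$ and $C/\hbar C \cong A \qq{\gamma}G$, and $\varphi$ reduces modulo $\hbar$ to $\overline{\varphi}$.

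Granting this, the argument is formal. For injectivity, if $\varphi(b) = 0$ with $b \neq 0$ then, $B$ being separated, write $b = \hbar^{n} b'$ with $\overline{b'} \neq 0$ in $B/\hbar B$; then $\hbar^{n}\varphi(b') = 0$, and $C$ being torsion-free forces $\varphi(b') = 0$, so $\overline{\varphi}(\overline{b'}) = 0$, contradicting injectivity of $\overline{\varphi}$. For surjectivity, given $c \in C$ use surjectivity of $\overline{\varphi}$ to choose $b_{0} \in B$ with $\overline{\varphi}(\overline{b_{0}}) = \overline{c}$, so that $c - \varphi(b_{0}) = \hbar c_{1}$ for some $c_{1} \in C$; iterating, produce $b_{n} \in B$ with $c_{n} - \varphi(b_{n}) = \hbar c_{n+1}$; then $b \coloneqq \sum_{n \ge 0}\hbar^{n} b_{n}$ converges in the complete module $B$, and since $\varphi$ is $\Cpower{\hbar}$-linear and therefore $\hbar$-adically continuous, $\varphi(b) = \lim_{N}\sum_{n \le N}\hbar^{n}\varphi(b_{n}) = c$. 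Hence $\varphi$ is an isomorphism, which is the first assertion.

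For the ``in particular'' clause, it remains to see that the hypothesis that $\overline{\varphi}$ is an isomorphism is automatic once $\cAh$ quantises a Poisson manifold $X$ satisfying the classical reduction-by-stages hypotheses. In that case the reduction-by-stages theorem recalled in \cref{sec:ReductionStages} (see \cite{MMO:HamRed}) supplies a Poisson isomorphism $\bigl(X \qq{\eta}H\bigr)\qq{\kappa}K \isoto X\qq{\gamma}G$; and $\overline{\varphi}$, being the $\hbar = 0$ shadow of the coset identity map exhibited at the end of the proof of \cref{thm:2StageReduction}, is precisely this identification. Therefore $\overline{\varphi}$ is an isomorphism and the first part applies.

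I expect the only genuine subtlety to lie in the first paragraph --- more precisely, in verifying that the two iterated quantum reductions defining $B$ really do fall under the ``reduction of a deformation quantisation is a deformation quantisation'' statement: one needs $\eta$ to be an appropriate (regular) value for the $H$-moment map, the residual $K$-action on $\cAh \qqq{\eta} H$ to be suitably free with $\kappa$ an appropriate value, and the induced $K$-action there to be the quantisation of the residual classical $K$-action on $A \qq{\eta} H$. These are exactly the conditions bundled into ``the classical reduction-by-stages hypotheses'', so under that assumption the input used above is in place; the ``isomorphism mod $\hbar$ implies isomorphism'' step is then routine.
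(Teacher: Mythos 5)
Your proposal is correct and takes essentially the same route as the paper's proof, which likewise rests on the fact (recorded in the remark following the definition of quantum Hamiltonian reduction) that a quantum reduction of a deformation quantisation is again a deformation quantisation of the corresponding classical reduction, so that an isomorphism at the classical level lifts; the paper's own proof is considerably terser, merely observing that $\varphi$ induces a Poisson homomorphism and that $A$ and $\cA$ share an underlying vector space, then citing \cite[\S5.3]{MMO:HamRed} for the classical reduction-by-stages isomorphism. Your version fills in the $\hbar$-adic bookkeeping (the source and target are $\hbar$-torsion-free, complete, and separated, and a $\Cpower{\hbar}$-linear map which is an isomorphism modulo $\hbar$ between such modules is an isomorphism) that the paper leaves implicit by working with the $\hbar \mapsto 1$ avatar.
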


\begin{proof}
The first statement follows from the fact that $\varphi$ induces a homomorphism of
Poisson algebras, and from the fact that $A$ and $\cA$ have identical underlying
vector spaces.
That this induces an isomorphism if $\cAh$ is a deformation quantisation of a
Poisson manifold is just the classical Hamiltonian reduction by stages
construction, which can be found in e.g.~\cite[§5.3]{MMO:HamRed}.
\end{proof}

\begin{eg}
Consider the algebra $\Uh(\fsl_3)$, which is a deformation quantisation of the
Poisson variety $\fsl_3^*$.
By restriction of the adjoint action of $SL_3$, $\Uh(\fsl_3)$ is acted on by the
following groups:
\begin{align*}
N & = \set*{ \begin{psmallmatrix} 1&0&0 \\ r&1&0 \\ t&s&1 \end{psmallmatrix}
              \st r,s,t \in \CC },
& M & = \set*{ \begin{psmallmatrix} 1&0&0 \\ r&1&0 \\ t&0&1 \end{psmallmatrix}
              \st r,t \in \CC },
& K & = \set*{ \begin{psmallmatrix} 1&0&0 \\ 0&1&0 \\ 0&s&1 \end{psmallmatrix}
              \st s \in \CC }.
\end{align*}
Note that $N = M \rtimes K$, and the quantum comoment maps associated to the
actions are given by the inclusions of their respective Lie algebras into
$\Uh(\fsl_3)$.
Let $\chi \in \fn^*$ be the character corresponding to the regular nilpotent element
$e = \begin{psmallmatrix} 0&1&0 \\ 0&0&1 \\ 0&0&0 \end{psmallmatrix}$ under the
Killing isomorphism, and let $\eta \in \fm^*$ and $\kappa \in \fk^*$ be the restrictions
of~$\chi$.

The quantum Hamiltonian reduction $\Uh(\fsl_3) \qqq{\chi} N$ is the polynomial ring
$\Cpoly{z_1,z_2}$, where
\begin{align*}
z_1 & \coloneqq h_1^2 + h_1 h_2 + h_2^2 + 3\hbar(e_1 + e_2) \\
z_2 & \coloneqq 2h_1^3 + 3h_1^2 h_2 - 3h_1 h_2^2 - 2h_2^3
          + 9\hbar e_1(h_1 + 2h_2) - 9\hbar e_2(2h_1 + h_2)
          + 27\hbar^2(e_3 + e_2).
\end{align*}
These can be lifted to invariants under the action of $M$ in
$\Uh(\fsl_3) \big/ \Uh(\fsl_3)\Phi_M(\cI_{\eta})$ as
\begin{align*}
z_1 & \mapsto z_1 + 3e_2 (f_2 - \hbar) &
z_2 & \mapsto z_2 + 9(e_2h_2 + \hbar e_3 - \hbar e_2) (f_2 - \hbar).
\end{align*}
Passing to the quotient
$\pp[\big]{\Uh(\fsl_3) \qqq{\eta} M} \big/ \pp[\big]{\Uh(\fsl_3) \qqq{\eta} M} \Phi_K(\cI_\kappa)$
yields the well-defined map~$\psi$ of \cref{fig:QHRstages}, and its image lies in
$K$\nb-invariants.
\end{eg}

%Let $\fg \simeq \fh \rtimes \fk$ be a Lie algebra which is a semidirect product of
%the subalgebras $\fh$ and $\fk$, with $\fh$ an ideal in $\fg$.
%Let $X$ be a Poisson variety with a deformation quantisation $\cAh$ and a
%$\fg$\nb-equivariant quantum comoment map
%$\Phi_\hbar \colon \fg \to \cAh$.
%Let $\gamma \in \fg^*$ be invariant under the action of $G$, which is identified with
%$(\eta,\kappa)$ under the decomposition
%$\cramped{(\fg^*)^G} = \cramped{(\fh^* \times \fk^*)^G}
%  \subseteq \cramped{(\fh^*)^H} \times \cramped{(\fk^*)^K}$;
%this implies that $\eta$ is fixed by the action of $K$.
%Let~$\cI_\gamma$ be the two-sided ideal in $\Uh(\fg)$ generated by
%$\smash{\fg_{\hbar,\gamma}} \coloneqq \set{x - \gamma(x)\hbar \st x \in \fg}$, and analogously
%define the corresponding ideals $\cI_\eta \subseteq \Uh(\fh)$ and
%$\cI_\kappa \subseteq \Uh(\fk)$.
%Define the quantum Hamiltonian reduction $\cAh \qqq{\gamma} G$ to be
%$\smash{\cramped{\pp[\big]{\cAh \big/ \Acoset{\gamma}}^G}}$,
%which has a well-defined algebra structure.
%Then there exists an isomorphism of algebras
%\begin{equation}
%\label{eq:QRedByStages}
%\cAh \qqq{\gamma} G \simeq \pp{ \cAh \qqq{\eta} H } \qqq{\kappa} K.
%\end{equation}
%Furthermore, $\cAh \qqq{\gamma} G$ is a deformation quantisation of the Poisson
%reduction $X \qq{\gamma} G$, and similarly for $H$ and~$K$.

\section{Reduction by stages for W-algebras in type~A}
\label{chap:TypeA}

%%fakesubsection
Given the framework of quantum Hamiltonian reduction by stages, we can now try
to find an explicit realisation in the case of \W-algebras, as in
\cref{fig:IntRedDiag}.
From this point we shall work over $\CC$ and in type~A, assuming that
$\fg = \fsl_n$. %, the Lie algebra of traceless $n \times n$ complex matrices.
In this case, we have a simple classification of both the conjugacy classes of
nilpotent elements and of their good gradings.

Recall the set of nilpotent orbits in $\fsl_n$ is parameterised by partitions
of~$n$, corresponding to the sizes of the Jordan blocks for the nilpotent.
The set of nilpotent orbits in a Lie algebra always has a natural partial
ordering, where $\vO_{e_1} \le \vO_{e_2}$ is defined to mean
$\vO_{e_1} \subseteq \smash{\overline{\vO}_{e_2}}$.
In type~A, this coincides with the dominance ordering on partitions:
take $\lambda = (\lambda_1, \dotsc, \lambda_k)$ and $\mu = (\mu_1, \dotsc, \mu_k)$, where $\lambda$ and $\mu$
are padded on the right with zeros if necessary, and define $\lambda \le \mu$ to mean that
$\smash{\cramped{\sum_{i=1}^\ell}} \lambda_i \le \smash{\cramped{\sum_{i=1}^\ell}} \mu_i$ for
every $\ell = 1, \dotsc, k$.
A classical theorem of Gerstenhaber classifies the covering relations in the
dominance ordering, and roughly corresponds to `sliding a box up' in the
corresponding Young diagram.

\begin{lem}{\normalfont [Gerstenhaber]}%\cite[Lemma~6.2.4]{CM:Nilorbits}}
\label{prop:OrbitCover}
The partition $\lambda$ covers $\mu$ if and only if there exist indices $j < k$ with
$\mu_j = \lambda_j - 1$, $\mu_k = \lambda_k + 1$ and $\lambda_i = \mu_i$ otherwise, where $j$ is the
smallest index such that $0 \le \lambda_k < \lambda_j - 1$ and either $k = j+1$ or
$\lambda_k = \lambda_j - 2$.
\end{lem}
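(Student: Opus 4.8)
The plan is to reduce the claim to an analysis of \emph{single box moves}. For a partition $\lambda=(\lambda_1,\lambda_2,\dots)$ and indices $j<k$, let $\lambda^{\langle j,k\rangle}$ denote the sequence obtained from $\lambda$ by subtracting $1$ from $\lambda_j$ and adding $1$ to $\lambda_k$, leaving all other parts fixed; on Young diagrams this removes a box from the end of row $j$ and re-attaches it at the end of row $k$. A direct check shows $\lambda^{\langle j,k\rangle}$ is again a partition exactly when $0\le\lambda_k\le\lambda_j-2$, and, in the case $k>j+1$, additionally $\lambda_j>\lambda_{j+1}$ and $\lambda_{k-1}>\lambda_k$. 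The two things to prove are then: (i)~if $\lambda$ covers $\mu$ then $\mu=\lambda^{\langle j,k\rangle}$ for a unique pair $j<k$; and (ii)~such a box move $\lambda\to\lambda^{\langle j,k\rangle}$ is a covering precisely when no partition lies strictly in between, which turns out to be equivalent to the numerical side conditions in the statement.

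For part~(i) I would first record the elementary fact that a single box move strictly lowers the dominance order: writing $s_\ell(\nu)=\nu_1+\dots+\nu_\ell$ for partial sums, the move $\lambda\to\lambda^{\langle j,k\rangle}$ leaves every $s_\ell$ unchanged except for $j\le\ell\le k-1$, where it decreases by exactly $1$, so $\lambda^{\langle j,k\rangle}<\lambda$. The substantive input is the classical converse-type statement: if $\mu<\lambda$ then there exists a single box move $\lambda\to\nu$ with $\mu\le\nu<\lambda$. One proves this by choosing the box to slide with care --- take $j$ to be the bottom row of the constant block of $\lambda$ containing the first row where $\lambda$ and $\mu$ differ (so $\lambda_j>\lambda_{j+1}$ and $\mu_j<\lambda_j$), and take $k$ to be the first index below row $j$ at which the partial sums of $\lambda$ meet those of $\mu$, adjusted upward to the top of its constant block so that the move remains legal; a short partial-sum computation then confirms $\nu:=\lambda^{\langle j,k\rangle}$ is a partition with $\mu\le\nu$. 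Granting this, if $\lambda$ covers $\mu$ then the partition $\nu$ so produced cannot lie strictly between $\mu$ and $\lambda$, so $\nu=\mu$; hence $\mu$ is a single box move below $\lambda$, and the pair $(j,k)$ is then determined, since $\mu$ and $\lambda$ differ in exactly the positions $j<k$, by $-1$ and $+1$ respectively.

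For part~(ii), fix $\mu=\lambda^{\langle j,k\rangle}$. The partial sums of $\mu$ and of $\lambda$ agree outside $\{j,\dots,k-1\}$ and differ there by exactly $1$, so any partition $\nu$ with $\mu\le\nu\le\lambda$ has $s_\ell(\nu)=s_\ell(\lambda)$ off that range and $s_\ell(\nu)\in\{s_\ell(\lambda)-1,\,s_\ell(\lambda)\}$ on it. For the ``if'' direction: when $k=j+1$ only the single partial sum $s_j$ can vary, forcing $\nu\in\{\mu,\lambda\}$; when $\lambda_k=\lambda_j-2$ and every part strictly between rows $j$ and $k$ equals $\lambda_j-1$, I use that $\nu$ being a partition is equivalent to the sequence $\ell\mapsto s_\ell(\nu)$ having non-increasing increments, and check that such a sequence which coincides with $s_\bullet(\lambda)$ at $\ell=j-1$ and at $\ell=k$ and drops by at most $1$ in between must either never drop (so $\nu=\lambda$) or drop at every position of $\{j,\dots,k-1\}$ (so $\nu=\mu$); the flat shape of $\lambda$ between rows $j$ and $k$ is exactly what is used here. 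For the ``only if'' direction: when a side condition fails one exhibits an explicit intermediate partition --- for instance $\nu=\lambda^{\langle j,j+1\rangle}$ if $k>j+1$ and $\lambda_j>\lambda_{j+1}+1$; $\nu=\lambda^{\langle k-1,k\rangle}$ if $\lambda_{k-1}>\lambda_k+1$; and $\nu=\lambda^{\langle j,i+1\rangle}$ if $k\ge j+3$ and $\lambda_i>\lambda_{i+1}$ for some $j<i<k-1$ --- and a one-line partial-sum check shows $\mu<\nu<\lambda$ in each case. Assembling these, $\lambda$ covers $\mu$ if and only if $\mu=\lambda^{\langle j,k\rangle}$ with $0\le\lambda_k\le\lambda_j-2$ and either $k=j+1$ or ($\lambda_k=\lambda_j-2$ together with $\lambda_{j+1}=\dots=\lambda_{k-1}=\lambda_j-1$); phrasing the last alternative as the minimality of $j$ among indices satisfying $0\le\lambda_k<\lambda_j-1$ recovers the statement as written.

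\textbf{Main obstacle.} The genuinely delicate step is the box-sliding lemma in part~(i). The naive choices of which box to slide break down when $\lambda$ has repeated parts near the source or target rows --- a box removed from the top of a block of equal parts need not give a partition --- so one must slide a box out of the \emph{bottom} of a suitable constant block and into a carefully chosen row, and then verify both legality and the inequality $\mu\le\nu$ by comparing partial sums. Everything else --- the strict-decrease observation, the increment argument of part~(ii), and the explicit intermediate partitions --- is routine bookkeeping with partial sums.
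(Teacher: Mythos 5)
The paper does not prove this lemma; it is cited as a classical result of Gerstenhaber and used as a black box, so there is no proof of record to compare against. Assessed on its own terms, your argument is sound and follows the standard route via partial sums: reduce covers to single box moves, show a box move strictly decreases dominance, prove the ``Hardy--Littlewood--P\'olya''-type statement that $\mu<\lambda$ admits a box move $\nu$ with $\mu\le\nu<\lambda$, and then characterise which box moves are covers by an analysis of when an intermediate sequence of partial sums with $\{0,-1\}$ defects on $[j,k-1]$ and non-increasing increments can exist. The explicit intermediates you exhibit in the ``only if'' direction of part~(ii) exhaust the failure modes: given $\lambda_j>\lambda_k+2$ and that $\mu$ is a partition one has $\lambda_j>\lambda_{j+1}\ge\lambda_{k-1}>\lambda_k$, so at least one of $\lambda_j-\lambda_{j+1}\ge2$, $\lambda_{k-1}-\lambda_k\ge2$, or $\lambda_i>\lambda_{i+1}$ for some $j<i<k-1$ must hold, and your three $\nu$'s cover exactly these.

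The one place I would ask you to write out a further line is the box-sliding lemma of part~(i), which you correctly flag as the delicate step. With $j$ the bottom of the constant block through the first differing row $a$, $b$ the first index past $a$ where the partial sums re-agree, and $k$ the top of the constant block of $\lambda$ through $b$, the only case in which legality of $\nu=\lambda^{\langle j,k\rangle}$ is not immediate from $\lambda_j>\lambda_{j+1}$ and $\lambda_{k-1}>\lambda_k$ is $k=j+1$, where one also needs $\lambda_j\ge\lambda_{j+1}+2$. That this holds is a short consequence of $\mu$ being a partition: if $\lambda_j=\lambda_{j+1}+1$ then $\mu_j<\lambda_j$ forces $\mu_j\le\lambda_{j+1}=\lambda_b$, while $s_{b-1}(\mu)<s_{b-1}(\lambda)$ and $s_b(\mu)=s_b(\lambda)$ force $\mu_b>\lambda_b$, contradicting $\mu_b\le\mu_{j+1}\le\mu_j$. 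Adding this sentence closes the only genuine gap; everything else is routine partial-sum bookkeeping as you say.
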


\begin{eg}
\label{eg:OrbitCover}
The partitions $\lambda = (3)$, $\mu = (2,1)$ and $\nu = (1,1,1)$ cover one another in
turn.
\begin{align*}
\ytableausetup{centertableaux,boxsize=1.1em}
\lambda & = \ydiagram{3}     &
\mu & = \ydiagram{2,1}   &
\nu & = \ydiagram{1,1,1}
\ytableausetup{nocentertableaux,boxsize=\boxsize}
\end{align*}
\end{eg}

\subsection{Pyramids}
\label{sec:TypeAPyramids}

The problem of classifying all good gradings has been solved by Elashvili
and Kac \cite{EK:ClassGG};
in the classical types, this is accomplished using a combinatorial structure
known as a \emph{pyramid}.
In type~A, pyramids are an enriched version of Young diagrams, allowing
horizontal shifts of the rows according to certain conditions.
In this paper we shall use the French convention for Young diagrams.

\begin{defn}
\label{def:typeApyramid}
Let $\lambda = (\lambda_1, \dotsc, \lambda_k)$ be a partition of $n$.
A \emph{pyramid of shape $\lambda$} is a Young diagram of shape $\lambda$ consisting of
boxes of size~2, along with integer horizontal row shifts such that
the co-ordinates of the first (resp.\ last) boxes in each row form an
increasing (resp.\ decreasing) sequence.

A \emph{filling} of a pyramid is a labelling of each of the boxes with a number
between~1 and~$n$, such that there are no repeated labels.
Given a filled pyramid, the column and row of the box labelled~$k$ are denoted
$\col(k)$ and $\row(k)$, respectively.
We say $\ell$ is \emph{right-adjacent} to $k$, denoted $k \to \ell$, if
$\row(k) = \row(\ell)$ and $\col(k) + 2 = \col(\ell)$.
\end{defn}

\begin{note}
The row and column of a box are only well-defined up to an integer shift.
However, since we'll only ever be concerned with differences of row and column
numbers, this will not cause a problem.

When filling pyramids, we shall most often choose the labelling so that it
increases first up columns and then left to right.
\end{note}

\begin{eg}
The three pyramids of shape $(4,3)$ follow, each with a sample filling.
\begin{subequations}
\label{eg:pyramids}
\begin{minipage}{.333\textwidth}
  \begin{equation}
  \label{eg:pyramidLeft}
  % \lambda = (4,3), left
  \raisebox{-1.5em}{
  \begin{picture}(80,40)
  \put(0,0) {\line(1,0){80}} \put(0,20){\line(1,0){80}}
  \put(0,40){\line(1,0){60}}
  \put(0,0) {\line(0,1){40}} \put(20,0){\line(0,1){40}}
  \put(40,0){\line(0,1){40}} \put(60,0){\line(0,1){40}}
  \put(80,0){\line(0,1){20}}
  %\put(40,0){\circle*{3}}
  \put(10,10){\makebox(0,0){{1}}}
  \put(10,30){\makebox(0,0){{2}}}
  \put(30,10){\makebox(0,0){{3}}}
  \put(30,30){\makebox(0,0){{4}}}
  \put(50,10){\makebox(0,0){{5}}}
  \put(50,30){\makebox(0,0){{6}}}
  \put(70,10){\makebox(0,0){{7}}}
  \end{picture}
  }
  \end{equation}
\end{minipage}
\begin{minipage}{.333\textwidth}
  \begin{equation}
  \label{eg:pyramidDynkin}
  % \lambda = (4,3), Dynkin
  \raisebox{-1.5em}{
  \begin{picture}(80,40)
  \put(0,0)  {\line(1,0){80}} \put(0,20) {\line(1,0){80}}
  \put(10,40){\line(1,0){60}}
  \put(0,0)  {\line(0,1){20}} \put(20,0) {\line(0,1){20}}
  \put(40,0) {\line(0,1){20}} \put(60,0) {\line(0,1){20}}
  \put(80,0) {\line(0,1){20}}
  \put(10,20){\line(0,1){20}} \put(30,20){\line(0,1){20}}
  \put(50,20){\line(0,1){20}} \put(70,20){\line(0,1){20}}
  %\put(40,0) {\circle*{3}}
  \put(10,10){\makebox(0,0){{1}}}
  \put(20,30){\makebox(0,0){{2}}}
  \put(30,10){\makebox(0,0){{3}}}
  \put(40,30){\makebox(0,0){{4}}}
  \put(50,10){\makebox(0,0){{5}}}
  \put(60,30){\makebox(0,0){{6}}}
  \put(70,10){\makebox(0,0){{7}}}
  \end{picture}
  }
  \end{equation}
\end{minipage}
\begin{minipage}{.333\textwidth}
  \begin{equation}
  \label{eg:pyramidRight}
  % \lambda = (4,3), right
  \raisebox{-1.5em}{
  \begin{picture}(80,40)
  \put(0,0) {\line(1,0){80}} \put(0,20){\line(1,0){80}}
  \put(20,40){\line(1,0){60}}
  \put(0,0) {\line(0,1){20}} \put(20,0) {\line(0,1){40}}
  \put(40,0) {\line(0,1){40}} \put(60,0) {\line(0,1){40}}
  \put(80,0) {\line(0,1){40}}
  %\put(40,0){\circle*{3}}
  \put(10,10){\makebox(0,0){{1}}}
  \put(30,10){\makebox(0,0){{2}}}
  \put(30,30){\makebox(0,0){{3}}}
  \put(50,10){\makebox(0,0){{4}}}
  \put(50,30){\makebox(0,0){{5}}}
  \put(70,10){\makebox(0,0){{6}}}
  \put(70,30){\makebox(0,0){{7}}}
  \end{picture}
  }
  \end{equation}
\end{minipage}
\end{subequations}
\end{eg}

\begin{thm}{\normalfont \cite[Theorem~4.2]{EK:ClassGG}}
There is a bijection between the pyramids of size $n$ and the set of good
gradings in $\fsl_n$ up to conjugacy. The same holds in $\fgl_n$.
\end{thm}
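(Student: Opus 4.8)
The plan is to produce every good grading of $\fgl_n$ from a grading of the defining representation, read off from it a Young diagram with integer row shifts, and recognise the goodness conditions as exactly the inequalities that turn such a diagram into a pyramid.

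First I would reduce to $\fgl_n$. Since $\fgl_n = \fsl_n \oplus \CC I$ with $\CC I$ the (degree-$0$) centre, every $\ZZ$\nb-grading of $\fsl_n$ extends uniquely to $\fgl_n$ by placing $I$ in degree~$0$, and the extension is good for $e \in \fsl_n$ exactly when the original grading is; moreover $GL_n$\nb-conjugacy and $SL_n$\nb-conjugacy of gradings coincide, since scalars act trivially by conjugation. So it is enough to work in $\fg = \operatorname{End}(V)$ with $V = \CC^n$. Every $\ZZ$\nb-grading of $\fg$ is inner, with $\fg_j = \bigoplus_i \operatorname{Hom}(V(i), V(i+j))$ for a $\ZZ$\nb-grading $V = \bigoplus_i V(i)$ that is unique up to an overall shift $i \mapsto i+c$ (which does not change the grading of $\fg$), and two gradings of $\fg$ are $GL_n$\nb-conjugate exactly when they have the same dimension vector $(\dim V(i))_i$, up to such a shift. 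Fix a nilpotent $e$ of Jordan type $\lambda = (\lambda_1 \ge \dotsb \ge \lambda_k)$.

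Given a grading good for $e$, the condition $e \in \fg_2$ says $e(V(i)) \subseteq V(i+2)$, so $V$ becomes a graded module over the graded polynomial ring $\CC[e]$ (with $\deg e = 2$), and hence decomposes as a direct sum of shifted cyclic modules $\CC[e]/(e^{m})$, that is, of graded Jordan chains. The underlying ungraded module depends only on the Jordan type of $e$, so the chain lengths are exactly $\lambda_1, \dotsc, \lambda_k$, and each chain $J_r$ carries an integer \emph{starting degree} $c_r$. Displaying the chains as the rows of a Young diagram of shape $\lambda$, with the $r$-th row occupying columns $c_r, c_r+2, \dotsc, c_r+2(\lambda_r-1)$ in boxes of size~$2$ and a basis of $V$ indexed by the boxes, the operator $e$ sends each box to its right neighbour, and the dimension vector of $V$ is the multiset of column coordinates. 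Conversely, any Young diagram of shape $\lambda$ with integer row shifts produces, via the right-neighbour rule on the box basis, a nilpotent $e$ of type $\lambda$ lying in degree~$2$ of the associated grading. So, modulo the overall shift, good gradings and shifted diagrams of shape $\lambda$ encode the same data; it remains to decide which diagrams occur.

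This is the crux, and the step I expect to be the main obstacle. For a grading with $e \in \fg_2$, goodness is equivalent to \cref*{GGprop4}, $\liez[\fg]{e} \subseteq \bigoplus_{j\ge0}\fg_j$: one implication is the one noted just after \cref{def:GG}, and for the converse $\ker(\ad e\colon \fg_j \to \fg_{j+2}) = \liez[\fg]{e}\cap\fg_j$ vanishes for $j \le -1$ (giving \cref*{GGprop2}), while by \cref*{GGprop5} the cokernel of $\ad e\colon \fg_j\to\fg_{j+2}$ is dual to $\liez[\fg]{e}\cap\fg_{-j-2}$, which vanishes for $j \ge -1$ (giving \cref*{GGprop3}). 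I would then identify $\liez[\fg]{e}$ with the algebra of $\CC[e]$\nb-linear endomorphisms of $V$, so that $\liez[\fg]{e} = \bigoplus_{r,s}\operatorname{Hom}_{\CC[e]}(J_s,J_r)$; a graded basis of $\operatorname{Hom}_{\CC[e]}(J_s,J_r)$ consists of the maps sending the generator of $J_s$ to each admissible basis vector of $J_r$, and tracking their column coordinates shows that all the degrees so obtained, over all $r$ and $s$, are $\ge 0$ if and only if, for every pair of rows with $\lambda_r \ge \lambda_s$, the column interval of row $s$ lies inside that of row $r$, i.e.\ $c_r \le c_s$ and $c_s + 2(\lambda_s-1) \le c_r + 2(\lambda_r-1)$. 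As the rows are listed in weakly decreasing length, this nesting of intervals is precisely the condition that the first-box coordinates increase and the last-box coordinates decrease, which is the pyramid condition of \cref{def:typeApyramid}. Finally, the row data $(\lambda_r, c_r)$ depends only on the conjugacy class of the good grading (any two nilpotents good for a fixed good grading are conjugate under the degree-$0$ automorphisms, hence give isomorphic graded $\CC[e]$\nb-modules) and conversely recovers the dimension vector up to shift, hence the conjugacy class; so the assignment is a bijection between pyramids of shape $\lambda$ and good gradings good for the orbit $\lambda$ up to conjugacy. Ranging over all partitions $\lambda$ of $n$ gives the claim for $\fgl_n$, and the first step transfers it to $\fsl_n$.
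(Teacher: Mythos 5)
The paper does not prove this statement; it cites it verbatim from Elashvili and Kac \cite{EK:ClassGG}, so there is no internal proof to compare against. Your sketch reproduces the argument of that reference in its essentials and the reasoning is sound: reduce to $\fgl_n$ via the centre, encode a grading as a graded decomposition of $V = \CC^n$, reduce goodness for $e \in \fg_2$ to \cref*{GGprop4} (noting GG5 holds automatically since any $\ZZ$\nb-grading of a reductive algebra is by $\ad$ of a semisimple element and the Killing form is invariant), decompose $V$ as a graded $\CC[e]$\nb-module into shifted Jordan chains, and translate non-negativity of the degrees appearing in $\operatorname{End}_{\CC[e]}(V)$ into the nesting-of-row-intervals condition that defines a pyramid. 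The one step I would make more explicit in a written-out version is the claim that the graded $\CC[e]$\nb-module type of $V$ is an invariant of the grading alone: this rests on the Richardson-orbit fact that the elements of $\fg_2$ for which a given grading is good form a single open dense $G_0$\nb-orbit, which deserves a citation rather than a parenthetical.
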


Consider a filled pyramid $P$.
The nilpotent element $e_P$ associated to $P$ is just the nilpotent element
associated to $P$ considered as a Young tableau, namely
$\smash{\sum_{i \to j}} E_{ij}$.
The grading $\Gamma_P$ associated to $P$ is defined by declaring $E_{ij}$ to be of
graded degree $\col(j) - \col(i)$.
It can be checked that this grading is good for $e_P$.

%\begin{eg}
%For pyramid \labelcref{eg:pyramidDynkin}, the corresponding nilpotent and good
%grading are
%\begin{align*}
%e_P & =
%\begin{pmatrix}
%0 & 0 & 1 & 0 & 0 & 0 & 0 \\
%0 & 0 & 0 & 1 & 0 & 0 & 0 \\
%0 & 0 & 0 & 0 & 1 & 0 & 0 \\
%0 & 0 & 0 & 0 & 0 & 1 & 0 \\
%0 & 0 & 0 & 0 & 0 & 0 & 1 \\
%0 & 0 & 0 & 0 & 0 & 0 & 0 \\
%0 & 0 & 0 & 0 & 0 & 0 & 0 \\
%\end{pmatrix}  &  \quad\text{and}\quad
%\Gamma_P & \colon
%\begin{pmatrix}
% 0 &  1 &  2 &  3 &  4 &  5 & 6 \\
%-1 &  0 &  1 &  2 &  3 &  4 & 5 \\
%-2 & -1 &  0 &  1 &  2 &  3 & 4 \\
%-3 & -2 & -1 &  0 &  1 &  2 & 3 \\
%-4 & -3 & -2 & -1 &  0 &  1 & 2 \\
%-5 & -4 & -3 & -2 & -1 &  0 & 1 \\
%-6 & -5 & -4 & -3 & -2 & -1 & 0 \\
%\end{pmatrix},
%\end{align*}
%where we here denote the grading $\Gamma_P$ using a matrix whose $(i,j)$-entry is
%$\deg E_{ij}$.
%\end{eg}

%\begin{note}
%\todo[noline]{Is this note necessary?}
%The pyramids which are symmetric under horizontal reflection are called
%\emph{symmetric pyramids}, and are in bijection with Dynkin gradings.
%Pyramids for which $\col(j) - \col(i)$ is even for every pair~$j$ and~$i$ are
%called \emph{even pyramids}, and correspond to even good gradings.
%Hence pyramids \labelcref{eg:pyramidLeft,eg:pyramidRight} are even pyramids,
%while \labelcref{eg:pyramidDynkin} corresponds to a Dynkin grading for a
%nilpotent of type $(4,3)$.
%\end{note}

\subsection{Reduction by stages for W-algebras}
\label{sec:SlodowyReduction}

In this section we shall use the machinery of pyramids to produce a quantum
Hamiltonian reduction by stages for \W-algebras in type~A.
Since our reductions are by nilpotent groups, it will suffice to work with the
Lie algebras, which completely determine the actions of the corresponding
algebraic groups.

\begin{obj}
\label{thm:SlodowyReductionStages}
Let $\fg = \fsl_n$, and $e_1, e_2 \in \fg$ be two nilpotent elements such
that $\vO_{e_1} < \vO_{e_2}$.
We would like to construct an algebraic group $K$ with a quantum Hamiltonian
action on $U(\fg,e_1)$, along with a character $\kappa \in \fk^*$, such that
$U(\fg,e_2) \simeq U(\fg,e_1) \qqq{\kappa} K$.
\end{obj}

It will suffice to produce such a construction for every pair such that
$\vO_2$ covers~$\vO_1$.
For any such pair we will chose nilpotent elements $e_i \in \vO_i$ with
respective duals $\chi_i \in \fg^*$ for $i = 1,2$, a good grading $\Gamma_1$ for~$e_1$
with a Premet subalgebra~$\fm_1$, and a subalgebra $\fm_2 \supseteq \fm_1$ satisfying:
\begin{enumerate}[label=\textsf{SR\arabic*}.,ref=\textsf{SR\arabic*},
                  labelindent=\parindent,leftmargin=*]
\item \label[condition]{cond:SubalgDecomp}
      $\fm_2$ decomposes as a semidirect product $\fm_2 = \fm_1 \rtimes \fk$.
\item \label[condition]{cond:SubalgCharacter}
      $\chi_2$ restricts to a character of $\fm_2$, and $\chi_2 = (\chi_1, \kappa)$ in the
      above decomposition.
\item \label[condition]{cond:KStabilise}
      the subalgebra $\fk$ annihilates $\chi_1$.
%\item \label[condition]{cond:KRegular}
%      the value $\kappa \in \fk^*$ is a regular value of the moment map
%      $\mu_K \colon \fg^* \qq{\chi_1} M_1 \to \fk^*$ of the action of $K$, the
%      algebraic group corresponding to $\fk$, on the Hamiltonian reduction
%      $\fg^* \qq{\chi_1} M_1$, and the action of $K$ on $\mu_K^{-1}(\kappa)$ is free and
%      proper.
\end{enumerate}

Since $\fm_1$ is a Premet subalgebra for $e_1$, the corresponding quantum
Hamiltonian reduction by stages will ensure that
\begin{equation*}
U(\fg) \qqq{\chi_2} \fm_2 \simeq \pp[\big]{U(\fg) \qqq{\chi_1} \fm_1} \qqq{\kappa} \fk =
  U(\fg,e_1) \qqq{\kappa} \fk.
\end{equation*}
We will therefore provide a construction satisfying these conditions, and
conjecture that the algebra $U(\fg) \qqq{\chi_2} \fm_2$ is isomorphic to the
\W-algebra $U(\fg,e_2)$.
Provided this conjecture holds, this will accomplish
\cref*{thm:SlodowyReductionStages}.

\subsubsection{The general construction}
\label{sec:GeneralConstruction}
Let $\mu$ be the partition corresponding to the nilpotent~$e_1$.
We will construct a right-aligned pyramid for~$\mu$, i.e.~a pyramid for which the
rightmost boxes in each row all lie in the same column, and number the boxes
from bottom to top and left to right.
This determines an even good grading~$\Gamma_1$ and Premet subalgebra~$\fm_1$
for~$e_1$.
By \cref{prop:OrbitCover}, for every partition~$\lambda$ which covers~$\mu$ in the
dominance ordering, there is a pair of integers $i < j$ for which~$\lambda$ is
obtained from~$\mu$ by `sliding a box down' from row~$j$ to row~$i$.
Define $e_2$ as
\begin{equation}
e_2 \mspace{10mu} \coloneqq \mspace{10mu} e_1 \mspace{20mu} + \mspace{-25mu}
  \sum_{\substack{
    \row(k) = i, \row(\ell) = j \\
    \col(k) = \col(\ell)}}
      \mspace{-35mu} E_{k\ell},
\label{eq:GenConstructionE}
\end{equation}
and define the Lie algebras $\fm_2$ and $\fk$ by
\begin{align}
\fk   & \coloneqq \gen[\big]{ E_m }_{m=1}^{j-i} &
\text{and}\quad
\fm_2 & \coloneqq \fm_1 + \fk, &
\text{where}\quad
E_m   & \mspace{10mu} \coloneqq \mspace{-30mu}
          \sum_{\substack{
          i \leq \row(k) < \row(\ell) \leq j \\
          \row(\ell) - \row(k) = m           \\
          \col(k) = \col(\ell)}}
            \mspace{-30mu} E_{\ell k}.
\label{eq:GenConstructionM}
\end{align}

Let us further define a semisimple element $h'_2$:
\begin{equation}
h'_2 \coloneqq \mspace{-20mu}
  \sum_{\substack{
    \row(\ell)=s, \; s \neq i,j \\
    t = 0,\dotsc,\lambda_s-1 \\
    \text{$\ell$ is $t$-th from the left}}}
      \mspace{-20mu} (\lambda_s - 1 - 2t) E_{\ell \ell}
  \mspace{20mu} + \mspace{-25mu} \sum_{\substack{
    \row(\ell)=i, \; \row(m)=j \\
    t=0,\dotsc,\lambda_i \\
    \col(m) = \col(\ell)-2 \\
    \text{$\ell$ is $t$-th from the left}}}
      \mspace{-20mu} (\lambda_i + K - 2t) (E_{\ell \ell} + E_{m m}).
\label{eq:GenConstructionH}
\end{equation}
In the second term, the $E_{m m}$ term is omitted if there is no $m$ satisfying
the conditions for the given $t$, and $E_{\ell \ell}$ is omitted for $t = \lambda_i$.
Here, $K$ is the unique constant so that $h'_2$ has trace zero.
Note that $h'_2$ is a semisimple element for which $[h'_2, e_2] = 2e_2$; we
shall show that $h'_2$ determines a good grading for $e_2$ in \cref{prop:h2Good},
however $\fm_2$ is not in general a Premet subalgebra for this grading, nor does
there necessarily exist an \hsl-triple containing $e_2$ and $h'_2$.

\begin{rem}
\label{rem:NilpGens}
Note that $\fk$ is an abelian Lie algebra, and that
\begin{equation*}
\liebr{\fm_2, \fm_2} = \liebr{\fm_2, \fm_1} \subseteq
  \liebr[\big]{\bigoplus_{k\le0} \fg_k, \bigoplus_{\ell\le-2} \fg_\ell} \subseteq
  \bigoplus_{k\le-2} \fg_k = \fm_1 \subseteq \fm_2.
\end{equation*}
This confirms that $\fm_2$ is closed under the Lie bracket, and further that
$\fm_1$ is an ideal in $\fm_2$; hence, $\fm_2$ is a semi-direct product
$\fm_1 \rtimes \fk$.
\end{rem}

\begin{eg}
\label{eg:sl6middle}
Let $\fg = \fsl_6$ and consider $\mu = (2,2,2)$.
The right-aligned pyramid~$P_1$, nilpotent element~$e_1$ and Premet
subalgebra~$\fm_1$ are as follows:
\begin{align*}
P_1 & = \raisebox{1.05em}{\large \ytableaushort{36,25,14}} &
e_1 & =
\begin{pmatrix}
0 & 0 & 0 & 1 & 0 & 0\\
0 & 0 & 0 & 0 & 1 & 0\\
0 & 0 & 0 & 0 & 0 & 1\\
0 & 0 & 0 & 0 & 0 & 0\\
0 & 0 & 0 & 0 & 0 & 0\\
0 & 0 & 0 & 0 & 0 & 0
\end{pmatrix} &
\fm_1 & =
\begin{pmatrix}
0 & 0 & 0 & 0 & 0 & 0\\
0 & 0 & 0 & 0 & 0 & 0\\
0 & 0 & 0 & 0 & 0 & 0\\
* & * & * & 0 & 0 & 0\\
* & * & * & 0 & 0 & 0\\
* & * & * & 0 & 0 & 0
\end{pmatrix}
\intertext{The unique covering partition is $\lambda = (3,2,1)$, which is obtained by
`sliding a box from row~$3$ to row~$1$'.
Applying the above procedure with $i = 1$ and $j = 3$ results in}
&&
e_2 & = e_1 + E_{13} + E_{46} &
\fm_2 & = \fm_1 + \gen[\big]{E_1, E_2} \\
&&
& =
\begin{pmatrix}
0 & 0 & 1 & 1 & 0 & 0\\
0 & 0 & 0 & 0 & 1 & 0\\
0 & 0 & 0 & 0 & 0 & 1\\
0 & 0 & 0 & 0 & 0 & 1\\
0 & 0 & 0 & 0 & 0 & 0\\
0 & 0 & 0 & 0 & 0 & 0
\end{pmatrix} &
& =
\begin{pmatrix}
0 & 0 & 0 & 0 & 0 & 0\\
a & 0 & 0 & 0 & 0 & 0\\
b & a & 0 & 0 & 0 & 0\\
* & * & * & 0 & 0 & 0\\
* & * & * & a & 0 & 0\\
* & * & * & b & a & 0
\end{pmatrix}
\end{align*}
where $E_1 = E_{21} + E_{32} + E_{54} + E_{65}$ and $E_2 =  E_{31} + E_{64}$.
Further,
\begin{equation*}
h'_2 = (E_{22} - E_{55}) + (2 E_{11} + 0 E_{44} + 0 E_{33} - 2 E_{66})
\end{equation*}
\end{eg}

\subsubsection{Properties of the construction}

\begin{thm}
\label{thm:typeAred}
Under the above circumstances, $e_2$ is a nilpotent element of type~$\lambda$, $\fm_2$
is a Lie algebra and \crefrange{cond:SubalgDecomp}{cond:KStabilise} hold.
Consequently, \cref{thm:2StageReduction} holds, and so there is homomorphism
from the quantum Hamiltonian reduction by stages to the one-shot reduction:
\begin{equation*}
\pp[\big]{U(\fg) \qqq{\chi_1} \fm_1} \qqq{\kappa} \fk = U(\fg,e_1) \qqq{\kappa} \fk
  \to U(\fg) \qqq{\chi_2} \fm_2.
\end{equation*}
\end{thm}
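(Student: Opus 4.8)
The statement has four parts: that $e_2$ is nilpotent of type $\lambda$, that $\fm_2$ is a Lie algebra, that \crefrange{cond:SubalgDecomp}{cond:KStabilise} hold, and that the homomorphism exists. My plan is to deduce the last three almost entirely from \cref{rem:NilpGens} together with one identity, and to treat the type of $e_2$ separately as the main combinatorial input. The identity is that $\fk$ centralises $e_1$, i.e.\ $\fk \subseteq \liez[\fg]{e_1}$. Granting this: each $E_{\ell k}$ occurring in a generator $E_m$ has $\col(\ell)=\col(k)$, so $\fk \subseteq \fg_0$ for $\Gamma_1$, whereas $\fm_1 = \bigoplus_{r \le -2} \fg_r$ since $\Gamma_1$ is even; hence $\fm_1 \cap \fk = 0$, and with \cref{rem:NilpGens} this gives $\fm_2 = \fm_1 \rtimes \fk$ — so $\fm_2$ is a Lie algebra and \cref{cond:SubalgDecomp} holds. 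Writing $f := e_2 - e_1 \in \fg_0$ and $\chi_2 := \killing{e_2}{\cdot}$, property \cref{GGprop5} gives $\killing{f}{\fg_r} = 0$ for $r \ne 0$, so $\chi_2$ agrees with the Premet character $\chi_1 = \killing{e_1}{\cdot}$ on $\fm_1$. Since $[\fm_2,\fm_2] = [\fm_1,\fm_1] + [\fk,\fm_1] \subseteq \fm_1$ (using $[\fk,\fk]=0$ from \cref{rem:NilpGens}), $\chi_2$ kills $[\fm_1,\fm_1] \subseteq \bigoplus_{r \le -4}\fg_r$ because $\chi_1$ does and $\killing{f}{\bigoplus_{r\le -4}\fg_r} = 0$, and $\chi_2$ kills $[\fk,\fm_1] \subseteq \bigoplus_{r\le -2}\fg_r$ because $\killing{f}{\bigoplus_{r\le -2}\fg_r}=0$ while $\killing{e_1}{[\fk,\fm_1]} = \killing{[e_1,\fk]}{\fm_1} = 0$ by the identity. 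Thus $\chi_2$ is a character of $\fm_2$ with $\chi_2 = (\chi_1,\kappa)$ — \cref{cond:SubalgCharacter} — and the relation $\killing{[e_1,\fk]}{\fm_1} = 0$ is precisely \cref{cond:KStabilise}.

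To prove $\fk \subseteq \liez[\fg]{e_1}$ I would argue combinatorially. For a box pair $(\ell,k)$ in $E_m$ one computes $[e_1, E_{\ell k}] = E_{\ell^- k} - E_{\ell k^+}$, where $\ell^-$ is the box immediately to the left of $\ell$ in its row and $k^+$ the box immediately to the right of $k$ in its row, each term dropped when the relevant box is absent. Summing over the box pairs of $E_m$, every matrix unit $E_{PQ}$ that can occur has $\row(P) - \row(Q) = m$ with both rows in $[i,j]$ and $\col(Q) = \col(P)+2$, and appears exactly once with a $+$ sign (from the pair $(P^+, Q)$, present iff $P^+$ exists) and once with a $-$ sign (from $(P, Q^-)$, present iff $Q^-$ exists). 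Because the pyramid is right-aligned, $\col(Q) = \col(P)+2$ forces both: $\col(P)$ is strictly below the common rightmost column, so $P^+$ exists, and $Q$ sits in a row weakly longer than $P$'s, so $\col(Q) = \col(P)+2$ exceeds the leftmost column of $Q$'s row and $Q^-$ exists. Hence every coefficient cancels, $[e_1, E_m] = 0$, and the lemma follows. This telescoping is the one place where right-alignment is genuinely used.

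The element $e_2$ is strictly upper-triangular in the box ordering, hence nilpotent; the substance is that its Jordan type is $\lambda$, and here I would exhibit an explicit Jordan basis. With $d := \mu_i - \mu_j$, the new arrows of \cref{eq:GenConstructionE} send the $s$-th box from the left of row $j$ to the $(s+d)$-th box from the left of row $i$, so under $e_2$ the Jordan chain of row $j$ is spliced onto that of row $i$; a triangular change of basis resolves the resulting configuration into one chain of length $\mu_i + 1 = \lambda_i$ and one of length $\mu_j - 1 = \lambda_j$ (the covering hypothesis of \cref{prop:OrbitCover} being what guarantees $\lambda$ is again a partition), leaving the chains of all other rows untouched. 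Thus $e_2$ has Jordan type $\lambda$ — equivalently $\operatorname{rank} e_2^m = \sum_a \max(\lambda_a - m, 0)$ for all $m$, which one may also verify directly. I expect this identification to be the main obstacle: it is the only genuinely combinatorial step, though once the chains are written out the verification is routine.

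Finally, with \crefrange{cond:SubalgDecomp}{cond:KStabilise} established I apply \cref{thm:2StageReduction} to $\cAh = \Uh(\fg)$, $H = M_1$, $K = K$, with quantum comoment map the inclusion $\fm_2 \hookrightarrow \Uh(\fg)$ and $\gamma = \chi_2 = (\chi_1, \kappa)$; the latter is fixed by the coadjoint action of $M_2 = \exp\fm_2$ since $\chi_2$ is a character of $\fm_2$ and $M_2$ is connected unipotent. That theorem produces a homomorphism $(\Uh(\fg) \qqq{\chi_1} M_1) \qqq{\kappa} K \to \Uh(\fg) \qqq{\chi_2} M_2$; passing to $\hbar = 1$, using that reduction by a unipotent group coincides with reduction by its Lie algebra, and that $U(\fg) \qqq{\chi_1} \fm_1 = U(\fg, e_1)$ because $\fm_1$ is a Premet subalgebra for $e_1$, yields the asserted homomorphism $U(\fg,e_1) \qqq{\kappa} \fk \to U(\fg) \qqq{\chi_2} \fm_2$.
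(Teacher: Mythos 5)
Your proposal is correct and takes essentially the same route as the paper: exhibit a Jordan basis for $e_2$ built by splicing and then triangularly resolving the chains of rows $i$ and $j$, verify \crefrange{cond:SubalgDecomp}{cond:KStabilise} using \cref{rem:NilpGens} and degree bookkeeping in the even good grading $\Gamma_1$, and then invoke \cref{thm:2StageReduction}. The one organisational difference is that you isolate and give a direct telescoping proof of the identity $[e_1,E_m]=0$ (and thus $\fk\subseteq\liez[\fg]{e_1}$), whereas the paper obtains the same fact implicitly by recognising each $E_m$ as $\sum_{i\le s\le j-m}E_s^{s+m}[0]$ in the Elashvili--Kac centraliser basis of \cref{lem:EKCentraliserBasis}; both hinge on the right-alignment of the pyramid, and beyond that the character and annihilation computations are the same argument phrased slightly differently (you split $\chi_2=\killing{e_1}{\cdot}+\killing{f}{\cdot}$ with $f=e_2-e_1\in\fg_0$, the paper computes $[e_2,E_m]\in\fg_0$ directly and applies \cref{GGprop5}).
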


We shall prove in \cref{thm:SlodRedByStagesIso} that this homomorphism is, in
fact, an isomorphism, but will leave this discussion until the necessary framework has been developed. 
Before proving the \namecref{thm:typeAred}, we should introduce a result of
\cite{EK:ClassGG}: given any filled pyramid $P$ with corresponding nilpotent
element $e$, the centraliser $\liez[\fg]{e}$ can be read off the pyramid~$P$.
Let $\mu = (\mu_1, \dotsc, \mu_k)$, so the $i$th row of the pyramid has $\mu_i$ boxes,
and let $b_{i,j}$ be the standard basis vector corresponding to the index of
the box in the $i$th~row, $j$th from the right in the filled pyramid.
We can represent an endomorphism of $\CC^n$ by specifying where each of the
basis vectors $b_{i,j}$ is sent in an arrow diagram.

Elashvili and Kac define a collection of endomorphisms in $\fgl_n$, denoted
$\smash{\cramped{E_i^j[r]}}$, where $i$ and~$j$ range over the rows of the
pyramid and~$r \in \NN$ varies over a range depending on $\mu_i$ and~$\mu_j$.
%ranges from~$0$ to $\smash{\mu_j}-1$ if $\smash{\mu_i \ge \mu_j}$ (i.e.~if
%$i \le j$), or from~$\mu_j-\mu_i$ to $\mu_j - 1$ if $\mu_i < \mu_j$ (i.e.~if $i > j$).
These endomorphisms are defined in \cref{fig:ggnilcommute}, where any basis
vector not specified is sent to zero.

%% Commuting endomorphisms

\begin{figure}[ht]
\centering

\tikzstyle{dot}=[circle,fill=black,inner sep=0,minimum size=3mm]
\tikzstyle{e}=[color=gray!50]

\begin{subfigure}[b]{\textwidth}
\centering
%% Commuting endomorphisms 1

\begin{tikzpicture}[very thick,bend angle=45,scale=.8]

  \path node (n3) at (5,0)  [dot,label=below:$b_{i,\mu_i}$] {}
        node (n4) at (7,0)  [dot,label=below:$b_{i,\mu_i-1}$] {}
          edge [->,e] (n3)
        node (d3) at (8,0)  {}
          edge [->,e] (n4)
        node (d4) at (11,0) {} edge [loosely dotted] (d3)
        node (n5) at (12,0) [dot,label=below:$b_{i,r+2}$] {}
          edge [->,e] (d4)
        node (n6) at (14,0) [dot,label=below:$b_{i,r+1}$] {}
          edge [->,e] (n5)
        node (d5) at (15,0) {}
          edge [->,e] (n6)
        node (d6) at (16,0) {} edge [loosely dotted] (d5)
        node (n7) at (17,0) [dot,label=below:$b_{i,2}$] {}
          edge [->,e] (d6)
          edge [->, bend right] (n5)
        node (n8) at (19,0) [dot,label=below:$b_{i,1}$] {}
          edge [->,e] (n7)
          edge [->, bend right] (n6);

  \begin{scope}
    \clip (n3) rectangle +(2.3,1.2);
    \draw [<-] (n3) to [bend left] +(5,0);
  \end{scope}

  \begin{scope}
    \clip (n4) rectangle +(1.5,1);
    \draw [<-] (n4) to [bend left] +(5,0);
  \end{scope}

  \begin{scope}
    \clip (n5) rectangle +(-1.5,1);
    \draw [->] (n5) to [bend right] +(-5,0);
  \end{scope}

  \begin{scope}
    \clip (n6) rectangle +(-2.3,1.2);
    \draw [->] (n6) to [bend right] +(-5,0);
  \end{scope}

\end{tikzpicture}

\caption{The endomorphism $E_i^i[r]$ on row $i$ with shift $r$, for $0 \le r < \mu_i$.
         Note that $e = \sum_i E_i^i[1]$.}
\label{fig:ggnilcommute1}
\end{subfigure}
\\
\begin{subfigure}[b]{\textwidth}
\centering
%% Commuting endomorphisms 3

\begin{tikzpicture}[very thick,scale=.7]

  \path node (n1) at (0,0)  [dot,label=below:$b_{i,\mu_i}$] {}
        node (d1) at (1,0)  {}
          edge [->,e] (n1)
        node (d2) at (3.4,0)  {} edge [loosely dotted] (d1)
        node (n2) at (4.4,0)  [dot,label=below:$b_{i,\mu_j-r}$] {}
          edge [->,e] (d2)
        node (n3) at (7,0)  [dot,label=below:$b_{i,\mu_j-1+r}$] {}
          edge [->,e] (n2)
        node (d3) at (8,0)  {}
          edge [->,e] (n3)
        node (d4) at (9,0) {} edge [loosely dotted] (d3)
        node (n4) at (10,0)  [dot,label=below:$b_{i,2}$] {}
          edge [->,e] (d4)
        node (n5) at (12.6,0)  [dot,label=below:$b_{i,1}$] {}
          edge [->,e] (n4);

  \path node (m1) at (.7,2)  [dot,label=above:$b_{j,\mu_j}$] {}
          edge [<-] (n2)
        node (m2) at (3.3,2)  [dot,label=above:$b_{j,\mu_j-1}$] {}
          edge [->,e] (m1)
          edge [<-] (n3)
        node (c1) at (4.3,2)  {}
          edge [->,e] (m2)
        node (c2) at (5.3,2)  {} edge [loosely dotted] (c1)
        node (m3) at (6.3,2)  [dot,label=above:$b_{j,2+r}$] {}
          edge [->,e] (c2)
          edge [<-] (n4)
        node (m4) at (8.9,2)  [dot,label=above:$b_{j,1+r}$] {}
          edge [->,e] (m3)
          edge [<-] (n5)
        node (c3) at (9.9,2)  {}
          edge [->,e] (m4)
        node (c4) at (10.9,2) {} edge [loosely dotted] (c3)
        node (m5) at (11.9,2) [dot,label=above:$b_{j,1}$] {}
          edge [->,e] (c4);

\end{tikzpicture}

\caption{The endomorphism $E_i^j[r]$ from row~$i$ to row~$j$ for $i<j$ with
shift~$r$, for $0 \le r < \mu_j$.}
\label{fig:ggnilcommute3}
\end{subfigure}
\begin{subfigure}[b]{\textwidth}
\centering
%% Commuting endomorphisms 2

\begin{tikzpicture}[very thick,scale=.7]

  \path node (n1) at (0,0)  [dot,label=below:$b_{j,\mu_j}$] {}
        node (n2) at (2.6,0)  [dot,label=below:$b_{j,\mu_j-1}$] {}
          edge [->,e] (n1)
        node (d1) at (3.6,0)  {}
          edge [->,e] (n2)
        node (d2) at (4.6,0)  {} edge [loosely dotted] (d1)
        node (n3) at (5.6,0)  [dot,label=below:$b_{j,2+r}$] {}
          edge [->,e] (d2)
        node (n4) at (8.2,0)  [dot,label=below:$b_{j,1+r}$] {}
          edge [->,e] (n3)
        node (d3) at (9.2,0)  {}
          edge [->,e] (n4)
        node (d4) at (11.6,0) {} edge [loosely dotted] (d3)
        node (n5) at (12.6,0) [dot,label=below:$b_{j,1}$] {}
          edge [->,e] (d4);

  \path node (m1) at (.7,2)  [dot,label=above:$b_{i,\mu_i}$] {}
        node (c1) at (1.7,2)  {}
          edge [->,e] (m1)
        node (c2) at (2.7,2)  {} edge [loosely dotted] (c1)
        node (m2) at (3.7,2)  [dot,label=above:$b_{i,\mu_j-r}$] {}
          edge [->,e] (c2)
          edge [->] (n1)
        node (m3) at (6.3,2)  [dot,label=above:$b_{i,\mu_j-1-r}$] {}
          edge [->,e] (m2)
          edge [->] (n2)
        node (c3) at (7.3,2)  {}
          edge [->,e] (m3)
        node (c4) at (8.3,2) {} edge [loosely dotted] (c3)
        node (m4) at (9.3,2)  [dot,label=above:$b_{i,2}$] {}
          edge [->,e] (c4)
          edge [->] (n3)
        node (m5) at (11.9,2) [dot,label=above:$b_{i,1}$] {}
          edge [->,e] (m4)
          edge [->] (n4);

\end{tikzpicture}

\caption{The endomorphism $E_i^j[r]$ from row~$i$ to row~$j$ for $i>j$ with
 shift~$r$, for $\mu_j-\mu_i \le r < \mu_j$. Note that some basis vector is always mapped
 to $b_{j,\mu_j}$.}
\label{fig:ggnilcommute2}
\end{subfigure}
\\
\caption{Endomorphisms of $\CC^n$ commuting with $e$.}
\label{fig:ggnilcommute}

\begin{note}
The nilpotent~$e$ is shown in grey in each row for reference.
\end{note}

\end{figure}
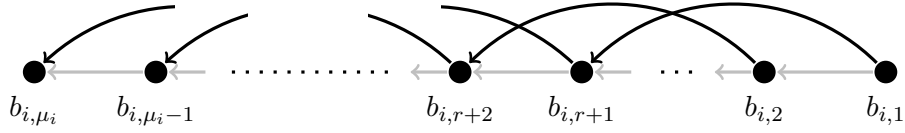
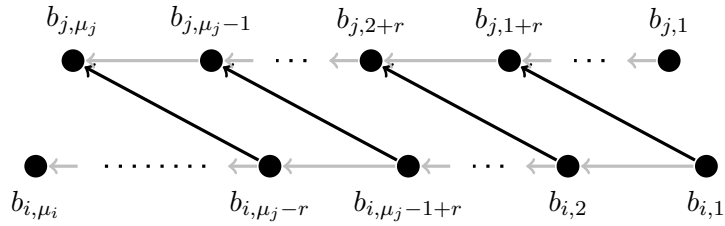
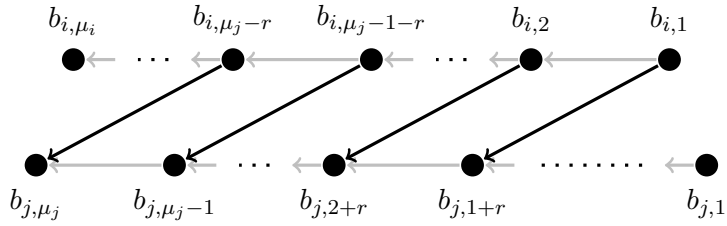

\begin{lem}{\normalfont \cite{EK:ClassGG}}
\label{lem:EKCentraliserBasis}
Let $\mu = (\mu_1,\dotsc,\mu_k)$ be a partition of $n$, and consider a filled pyramid
of shape~$\mu$ with associated nilpotent~$e$.
Then the collection $\smash{\set[\big]{E_i^j[r]}}$, where
\begin{align*}
i,j & \in \set{1, \dotsc, k} &
\text{and}\qquad\quad
\begin{aligned}
        0 & \le r < \mu_j \quad \text{if } i \le j \\
\mu_j - \mu_i & \le r < \mu_j \quad \text{if } i > j
\end{aligned}\;,
\end{align*}
forms a basis of the centraliser $\liez[{\fgl_n\mspace{-7mu}}]{e}$, and those
which lie in $\fsl_n$ form a basis of $\liez[{\fsl_n\mspace{-7mu}}]{e}$.
\end{lem}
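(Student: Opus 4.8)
The plan is to identify $\liez[{\fgl_n}]{e}$ with the endomorphism ring of $\CC^n$ as a $\CC[e]$-module and then to recognise the $E_i^j[r]$ as the obvious monomial basis of that ring. Put $V = \CC^n$. A filled pyramid of shape $\mu$ gives a basis $\{b_{i,t}\}$ of $V$ ($1 \le i \le k$, $1 \le t \le \mu_i$, with $b_{i,t}$ the $t$th box from the right in row $i$) in which $e\,b_{i,t} = b_{i,t+1}$ for $t < \mu_i$ and $e\,b_{i,\mu_i} = 0$; here the horizontal shifts of the pyramid are irrelevant, as this is merely the Jordan form of $e$. So $V = \bigoplus_{i=1}^{k} V_i$, where $V_i := \langle b_{i,1},\dots,b_{i,\mu_i}\rangle$ is a cyclic $\CC[e]$-module generated by $b_{i,1}$ and isomorphic to $\CC[x]/(x^{\mu_i})$.

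Since a $\CC$-linear endomorphism commutes with $e$ exactly when it is $\CC[e]$-linear, $\liez[{\fgl_n}]{e} = \operatorname{End}_{\CC[e]}(V) = \bigoplus_{i,j}\operatorname{Hom}_{\CC[e]}(V_i,V_j)$. A $\CC[e]$-map $V_i \to V_j$ is determined by the image $w$ of the generator $b_{i,1}$, subject only to $e^{\mu_i}w = 0$; as $\ker(e^{\mu_i}|_{V_j}) = \langle b_{j,t} : t > \mu_j - \mu_i\rangle$ has dimension $\min(\mu_i,\mu_j)$ with basis the $b_{j,r+1}$ for $\max(0,\mu_j-\mu_i) \le r < \mu_j$, the maps $\varphi_{i,j,r}\colon b_{i,t}\mapsto b_{j,t+r}$ (read as $0$ when $t+r>\mu_j$), for $r$ in that range, form a basis of $\operatorname{Hom}_{\CC[e]}(V_i,V_j)$.

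It then remains to check that the $E_i^j[r]$ drawn in \cref{fig:ggnilcommute} are exactly these monomial intertwiners up to scalars: for $i = j$, $E_i^i[r]$ is $e^r|_{V_i}$ extended by zero (consistent with $e = \sum_i E_i^i[1]$), and for $i \ne j$ the $E_i^j[r]$ shift row $i$ onto row $j$, with the index ranges stipulated in the statement---$0 \le r < \mu_j$ when $i \le j$ (so $\mu_i \ge \mu_j$) and $\mu_j - \mu_i \le r < \mu_j$ when $i > j$---being precisely those for which the shifted image lands in $\ker(e^{\mu_i}|_{V_j})$, i.e.~yields a well-defined intertwiner. Hence $\{E_i^j[r]\}_r$ is a basis of $\operatorname{Hom}_{\CC[e]}(V_i,V_j)$ for each $(i,j)$, and their union is a basis of $\liez[{\fgl_n}]{e}$. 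For the $\fsl_n$ statement: $\operatorname{id}_V = \sum_i E_i^i[0] \in \liez[{\fgl_n}]{e}$ has trace $n \ne 0$, so $\liez[{\fsl_n}]{e} = \liez[{\fgl_n}]{e}\cap\fsl_n$ is a hyperplane; the only $E_i^j[r]$ of nonzero trace are the block projectors $E_i^i[0]$, of trace $\mu_i$, and after the innocuous change of basis replacing $E_i^i[0]$ by $\mu_1 E_i^i[0] - \mu_i E_1^1[0]$ for $i \ge 2$ (which spans the same subspace), exactly one basis vector has nonzero trace, so the members lying in $\fsl_n$ form a basis of $\liez[{\fsl_n}]{e}$.

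The conceptual content---that the centraliser of a nilpotent is the endomorphism ring of its associated torsion $\CC[x]$-module, a direct sum of $\operatorname{Hom}$'s between cyclic modules each with an evident monomial basis---is standard. The main obstacle, and where the effort really lies, is the bookkeeping of the third step: confirming that the combinatorially-defined $E_i^j[r]$, with their stated index ranges, match the intertwiners $\varphi_{i,j,r}$ and the admissible offsets, together with the minor trace normalisation required for the $\fsl_n$ case.
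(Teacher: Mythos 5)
The paper does not prove this lemma; it is stated as a cited result from Elashvili--Kac \cite{EK:ClassGG}, with \cref{fig:ggnilcommute} serving only to define the endomorphisms $E_i^j[r]$. So there is no in-text argument to compare against, and your proof should be judged on its own. It is correct, and it is the natural argument: commuting with $e$ is the same as $\CC[e]$-linearity, so $\liez[{\fgl_n}]{e}=\operatorname{End}_{\CC[e]}(\CC^n)$; the rows of the pyramid give the Jordan decomposition $\CC^n=\bigoplus_i V_i$ with $V_i\cong\CC[x]/(x^{\mu_i})$; and each $\operatorname{Hom}_{\CC[e]}(V_i,V_j)$ has the monomial basis $b_{i,t}\mapsto b_{j,t+r}$ with $\max(0,\mu_j-\mu_i)\le r<\mu_j$, which is precisely the well-definedness constraint and which matches the pictures. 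The two index ranges in the statement are just this single constraint split into the cases $\mu_i\ge\mu_j$ (i.e.\ $i\le j$) and $\mu_i<\mu_j$ (i.e.\ $i>j$).

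You also correctly flagged that the final clause, ``those which lie in $\fsl_n$ form a basis of $\liez[{\fsl_n}]{e}$,'' is not literally accurate when the pyramid has $k>1$ rows: the only non-traceless $E_i^j[r]$ are the $k$ row-projectors $E_i^i[0]$ (of trace $\mu_i$), whereas $\dim\liez[{\fgl_n}]{e}-\dim\liez[{\fsl_n}]{e}=1$, so simply discarding them leaves $k-1$ too few elements. Your remedy --- replacing $E_i^i[0]$ for $i\ge2$ by the traceless combinations $\mu_1 E_i^i[0]-\mu_i E_1^1[0]$, after which exactly one basis vector fails to lie in $\fsl_n$ --- is the right way to make the assertion precise. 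This is an informality inherited with the citation, not a defect in your argument.
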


\begin{proof}[Proof of \cref{thm:typeAred}]
To prove that $e_2$ has the correct Jordan type, it suffices to exhibit a Jordan
basis.
Note that a Jordan basis can be read off the rows of the pyramid, proceeding
from right to left.
\begin{equation*}
\ytableausetup{boxsize=3em}
\begin{ytableau}
\none     & \none        & \none       & b_{j,\mu_j} & b_{j,\mu_j-1} & \none[\dots] & b_{j,2} & b_{j,1} \\
b_{i,\mu_i} & \none[\dots] & b_{i,\mu_j+1} & b_{i,\mu_j} & b_{i,\mu_j-1} & \none[\dots] & b_{i,2} & b_{i,1}
\end{ytableau}
\ytableausetup{boxsize=\boxsize}
\end{equation*}
The Jordan basis for $e_1$ in row~$i$ of this pyramid is therefore
\begin{equation*}
b_{i,\mu_i} \gets \dotsb \gets b_{i,\mu_j+1} \gets b_{i,\mu_j}
  \gets b_{i,\mu_j-1} \gets \dotsb \gets b_{i,2} \gets b_{i,1}.
\end{equation*}

The Jordan basis for $e_2$ is identical to that of $e_1$ except for those
strings corresponding to rows~$i$ and~$j$.
The Jordan basis in those rows is
\begin{gather*}
\mu_j b_{i,\mu_i} \gets \dotsb \gets \mu_j b_{i,\mu_j}
  \gets \pp[\big]{(\mu_j-1)b_{i,\mu_j-1} + b_{j,\mu_j}} \gets \dotsb
  \gets \pp[\big]{kb_{i,k} + b_{j,k+1}} \gets \dotsb
  \gets b_{j,1}, \\
\pp[\big]{b_{i,\mu_j-1} - b_{j,\mu_j}} \gets \dotsb
  \gets \pp[\big]{(\mu_j-k)b_{i,k} - b_{j,k+1}} \gets \dotsb
  \gets \pp[\big]{(\mu_j-1)b_{i,1} - b_{j,2}},
\end{gather*}
of lengths $\mu_i+1 = \lambda_i$ and $\mu_j-1 = \lambda_j$, respectively.

\Cref{cond:SubalgDecomp} is shown in \cref{rem:NilpGens}.
To check \labelcref{cond:SubalgCharacter}, note that
$\smash{\res{\chi_2}{\fm_1}} = \chi_1$ by construction, and so $\chi_2 = (\chi_1,\kappa)$ for
some $\kappa \in \fk^*$.
To confirm that $\chi_2$ is a character of~$\fm_2$, recall from \cref{rem:NilpGens}
that $\liebr{\fm_2,\fm_2} = \liebr{\fm_1 + \fk, \fm_1}$.
However, since
$\chi_2 \pp[\big]{\liebr{\fm_1,\fm_1}} = \chi_1 \pp[\big]{\liebr{\fm_1,\fm_1}} = 0$,
it remains only to check that $\chi_2(\liebr{\fk, \fm_1}) = 0$.
We shall check this on the generating set
$\set[\big]{\liebr{E_m, E_{\ell k}} \st 1 \le m \le j-i, \col(k) < \col(\ell)}$.
\begin{equation*}
\chi_2 \pp[\big]{ \liebr{E_m, E_{\ell k}} }
  = \inn[\big]{e_2}{\liebr{E_m, E_{\ell k}}}
  = \inn[\big]{\liebr{e_2, E_m}}{E_{\ell k}}
\end{equation*}
Using the language of \cref{fig:ggnilcommute}, note that
\begin{equation}
\label{eq:chiAnnihilate}
\liebr{e_2, E_m} = \liebr[\Big]{e_1 + E^i_j[0], \sum_{i\le s\le j-m} E^{s+m}_{s}[0]}
  = \sum_{i\le s\le j-m} \liebr[\Big]{E^i_j[0], E^{s+m}_s[0]} \in \fg_0.
%  = E^i_{j-m}[0] - E^{i+m}_j[0].
\end{equation}
Here the second equality follows from the fact that $E^{s+m}_s[0]$ commutes
with~$e_1$;
that $\chi_2$ annihilates \cref*{eq:chiAnnihilate} now follows from \cref{GGprop5},
and the fact that $E_{\ell k} \in \bigoplus_{t<0}\fg_t$.
This further establishes the claim that $\fk$ annihilates both $\chi_2$ and $\chi_1$:
hence \cref{cond:KStabilise} also holds.
This completes the proof of \cref{thm:typeAred}.
\end{proof}

\begin{thm}
\label{thm:PremetConjProps}
The pair $e_2$ and~$\fm_2$ satisfy \crefrange{PremNil}{PremChar}, .
\end{thm}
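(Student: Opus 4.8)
The goal is to verify the four Premet-type axioms \labelcref{PremNil,PremDim,PremZ,PremChar} for the pair $(e_2, \fm_2)$ produced by the construction in \cref{sec:GeneralConstruction}. The plan is to handle them roughly in increasing order of difficulty, leveraging what has already been established in \cref{thm:typeAred}.

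First, \cref{PremChar} is immediate: it was already shown in the proof of \cref{thm:typeAred} (via \cref{cond:SubalgCharacter}) that $\chi_2 = \killing{e_2}{\cdot}$ restricts to a character of $\fm_2$. Next, \cref{PremNil}: since $\fm_2 = \fm_1 \rtimes \fk$ with $\fm_1 = \bigoplus_{k \le -2}\fg_k$ ad-nilpotent and $\fk$ spanned by the lower-triangular elements $E_m$ defined in \cref*{eq:GenConstructionM}, I would observe that every element of $\fm_2$ is strictly lower-triangular as a matrix (with respect to the ordering of basis vectors coming from the pyramid $P_1$), hence ad-nilpotent in $\fgl_n$ and a fortiori in $\fsl_n$; alternatively, \cref{rem:NilpGens} already shows $\liebr{\fm_2,\fm_2}\subseteq\fm_1$, and iterating the bracket lands in deeper and deeper pieces of the $\Gamma_1$-grading, forcing nilpotency.

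The substantive points are \cref{PremZ} and \cref{PremDim}, and these should be proved together since they are closely linked: once $\fm_2 \cap \liez[\fg]{e_2} = 0$ is known, the dimension count $\dim\fm_2 = \tfrac12\dim\vO_{e_2}$ follows from $\dim\fm_1 = \tfrac12\dim\vO_{e_1}$ together with $\dim\fk = j-i$ and the change in orbit dimension when sliding a box from row $j$ to row $i$, which one can compute directly from the partition data (or from $\dim\vO_e = n^2 - \sum(\mu_i')^2$ in $\fgl_n$, where $\mu'$ is the transpose partition). So the real crux is \cref{PremZ}: showing $\fm_2 \cap \liez[\fg]{e_2} = 0$. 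For this I would use the explicit basis of $\liez[\fg]{e_2}$ furnished by \cref{lem:EKCentraliserBasis} applied to a pyramid adapted to $e_2$ — but since $\fm_2$ is described in terms of the pyramid $P_1$ for $e_1$, the matching is delicate. My approach: take an arbitrary $x \in \fm_2$, write $x = x_1 + y$ with $x_1 \in \fm_1$ and $y \in \fk$, and suppose $[e_2, x] = 0$. Using $e_2 = e_1 + E^i_j[0]$ and the fact that $\fm_1 \cap \liez[\fg]{e_1} = 0$ (the Premet property for $e_1$), together with the grading constraints, I would argue that the $\fm_1$-component and $\fk$-component must each be controlled, and derive $x = 0$. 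Concretely, the abelian subalgebra $\fk$ only touches rows $i$ through $j$, so the computation reduces to a question about the two rows $i$ and $j$ of the pyramid; there one checks by the explicit arrow-diagram description in \cref{fig:ggnilcommute} that no nonzero strictly-lower-triangular combination of the $E_m$ (plus lower $\fm_1$-corrections) can commute with $e_1 + E^i_j[0]$, essentially because $E^i_j[0]$ "connects" the two rows in a way incompatible with the strict lower-triangularity of $\fm_2$.

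The main obstacle I anticipate is precisely this compatibility check in \cref{PremZ}: reconciling the $P_1$-based description of $\fm_2$ with the $P_2$-based (or $h'_2$-based) description of $\liez[\fg]{e_2}$. One clean way to sidestep a messy matrix computation is to note that $[h'_2, \fk] \subseteq \fk$ and $[h'_2, \fm_1]\subseteq \fm_1$ with all eigenvalues on $\fm_2$ negative (this should follow from the definition \cref*{eq:GenConstructionH} of $h'_2$), so that $\fm_2 \subseteq \bigoplus_{t<0}\fg^{h'_2}_t$ in the $h'_2$-grading; since $\liez[\fg]{e_2}$ lies in non-negative $h'_2$-degree by the good-grading property \cref{GGprop4} (once $h'_2$ is shown to define a good grading for $e_2$, as promised in \cref{prop:h2Good}), the intersection is forced to vanish. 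If \cref{prop:h2Good} is available at this point this gives a one-line proof of \cref{PremZ}; if not, I would fall back on the direct basis argument above, which is longer but elementary. In either case \cref{PremDim} then closes out by the dimension bookkeeping described above, completing the proof.
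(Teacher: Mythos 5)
Your handling of \cref{PremNil}, \cref{PremChar}, and \cref{PremDim} is sound and matches the paper's approach: \cref{PremChar} is a subclaim of \cref{cond:SubalgCharacter} established in the proof of \cref{thm:typeAred}, \cref{PremNil} is immediate from the construction, and \cref{PremDim} follows from the orbit--stabiliser theorem together with the Elashvili--Kac description of the centraliser in \cref{lem:EKCentraliserBasis}. The genuine gap is in \cref{PremZ}.

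Your preferred ``one-line'' argument --- observe that $\fm_2 \subseteq \bigoplus_{t<0}\fg^{h'_2}_t$ and that $\liez[\fg]{e_2} \subseteq \bigoplus_{t\ge 0}\fg^{h'_2}_t$ once \cref{prop:h2Good} is in place --- would indeed close the case if its premise held, but the premise is false. In the $\fsl_6$ example of \cref{eg:sl6middle} one has $h'_2 = \operatorname{diag}(2,1,0,0,-1,-2)$, so the element $E_{43}$, which lies in $\fm_1 \subseteq \fm_2$, has $h'_2$-degree $0-0 = 0$, not strictly negative. The paper itself flags this incompatibility just after \cref*{eq:GenConstructionH} (``$\fm_2$ is not in general a Premet subalgebra for this grading''): $\fm_1$ was built from the right-aligned pyramid for $e_1$, and its $\Gamma_1$-degree $\le -2$ part straddles $h'_2$-degree $0$.

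Your fallback --- write $x = x_1 + y$ with $x_1 \in \fm_1$, $y \in \fk$, and appeal to $\fm_1 \cap \liez[\fg]{e_1} = 0$ --- is under-specified and, as sketched, does not decouple the pieces. Since $\fk$ commutes with $e_1$, the equation $[e_2,x] = 0$ reads $[e_1,x_1] + [E^i_j[0],x_1] + [E^i_j[0],y] = 0$; the first term shifts $\Gamma_1$-degree by $+2$ while the other two do not, so grading alone neither isolates $[e_1,x_1]$ nor rules out cancellations among the last two terms, and the Premet property for $e_1$ controls only the first. The paper's proof of \cref{PremZ} is instead a hands-on multi-case computation: it expands $[e_2,\cdot]$ on a generic element of $\fm_2$, collects the coefficient of each $E_{ks}$ as in \cref{eq:PremZ}, and runs an induction on horizontal position in the pyramid over five cases, with the $B_m$-contributions from $\fk$ killed by summing the coefficients along the matched boxes of rows $i$ and $j$. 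You correctly identified the obstacle --- reconciling the $P_1$-based $\fm_2$ with the $e_2$-centraliser --- but did not supply an argument that overcomes it.
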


\begin{proof}
\Cref{PremNil} is manifest from the construction, and \cref{PremChar} is a
subclaim of \cref{cond:SubalgCharacter}.
\Cref{PremDim} follows from the fact that $e_1$ itself satisfies it, along with
an application of the orbit--stabiliser theorem and
\cref{lem:EKCentraliserBasis}.

We prove \cref{PremZ} by directly calculating $\fm_2 \cap \liez[\fg]{e_2}$.
In the coming calculation, we use the following conventions:
\begin{itemize}
\item Recall that $k \to \ell$ means that $\ell$ is right-adjacent to $k$.
\item If $\row(k) = i$, then $\up{k}{p}$ indicates that $p$ is the box such that
      $\row(p) = j$ and $\col(p) = \col(k)$, if such exists.
      Similarly, if $\row(s) = j$, then $\up{q}{s}$ indicates that~$q$ is such
      that $\row(q) = i$ and $\col(q) = \col(s)$, if such exists.
\item $A_{k\ell} = 0$ if there do not exist $k$ and $\ell$ which satisfy the
      adjacency relations specified below, and~$B_m = 0$ if $m < 1$ or $m > j - i$.
\end{itemize}
Taking the commutator of~$e_2$ with a generic element of~$\fm_2$ results in the
following:
\begin{align}
\MoveEqLeft[3] \liebr[\Big]{e_2,
  \mspace{-15mu} \sum_{\col(v) < \col(u)} \mspace{-15mu} A_{uv} E_{uv}
  + \sum_{m=1}^{j-i} B_m E_m} = \notag \\
 = {} & \sum_{\col(s) < \col(k)} \pp*{
    \Aadj[k]{\ell}{s} - \Aadj{k}{r}[s]
    + \begin{cases}
        A_{ps} & \row(k) = i, \up{k}{p} \\
        0      & \text{otherwise}
      \end{cases}
    - \begin{cases}
        A_{kq} & \row(s) = j, \up{q}{s} \\
        0      & \text{otherwise}
      \end{cases}
  } E_{ks} \notag \\
 & + \mspace{-20mu} \sum_{\col(s) = \col(k)} \pp*{
    \Aadj[k]{\ell}{s} - \Aadj{k}{r}[s]
    + \begin{cases}
        B_m  & \row(k) = i, \row(s) = j-m \\
        -B_m & \row(s) = j, \row(k) = i+m \\
        0    & \text{otherwise}
      \end{cases}
  } E_{ks}. \label{eq:PremZ}
\end{align}
For \cref{eq:PremZ} to vanish, we will prove that all of $A_{uv}$ and $B_m$
must vanish as well.

\begin{case}
[{$A_{uv} = 0$ for $\col(v) < \col(u)$, $\row(u) \neq i$ and $\row(v) \neq j$}]
\hspace{0pt}\\
\label{case:PremZGen}
\indent
Examining the coefficient of $E_{uw}$ for $v \to w$ yields $A_{tw} - A_{uv}$,
where $u \to t$.
We can prove the claim by induction on the distance of $u$ from the right of the
pyramid.
The base case is when~$u$ is rightmost in its row:
then $A_{tw} = 0$, and so $A_{uv} = 0$.
The same argument assuming the inductive hypothesis for all~$u$ within $n$~boxes
of the right proves the claim for all $u$ within $n+1$ boxes of the right,
completing the induction.
\end{case}

\begin{case}
[{$A_{uv} = 0$ for $\col(v) < \col(u)-1$, $\row(u) = i$ and $\row(v) \neq j$}]
\hspace{0pt}\\
\label{case:PremZi}
\indent
Examining the coefficient of $E_{uw}$ for $v \to w$ yields
$A_{tw} - A_{uv} + A_{pw}$, where $u \to t$ and $\up{u}{p}$.
But $A_{pw} = 0$ by \cref{case:PremZGen}, and so the same argument as above
completes the case.
\end{case}

\begin{case}
[{$A_{uv} = 0$ for $\col(v) < \col(u)-1$, $\row(u) \neq i$ and $\row(v) = j$}]
\hspace{0pt}\\
\label{case:PremZj}
\indent
Apply the argument of \cref{case:PremZi} \emph{mutatis mutandis}.
\end{case}

\begin{case}
[{$A_{uv} = 0$ for $\col(v) < \col(u)-1$, $\row(u) = i$ and $\row(v) = j$}]
\hspace{0pt}\\
\label{case:PremZij}
\indent
The conclusions of \cref{case:PremZi,case:PremZj} allow the same argument to
again be applied \emph{mutatis mutandis}.
\end{case}

\begin{case}
[{$A_{uv} = 0$ for $\col(v) = \col(u)-1$}]
\hspace{0pt}\\
\label{case:PremZm}
\indent
If neither $\row(u) = i$ or $\row(v) = j$ this is dealt with by
\cref{case:PremZGen}, while if both of these hold there is no contribution from
any $B_m$.
Since the arguments are symmetric, we'll assume that $\row(u) = i$.

Assume that $\row(v) = j-m$ for $1 \le m \le j-i$; otherwise there is no
contribution from any $B_m$ and we're done.
Since $e_2$ covers $e_1$ there are exactly the same number of boxes in the two
rows; let the boxes of row~$i$ be labelled from the left $u_1, \dotsc, u_k$ and
the boxes of row~$j$ be labelled $v_1, \dotsc, v_k$.
The sum of the co-efficients of $E_{u_1 v_1}$ up to $E_{u_k v_k}$ is $k B_m$,
which proves that $B_m = 0$.
The argument of \cref{case:PremZGen} proves that the remaining $A_{uv}$ must
vanish, which completes this last case and the proof of the theorem. \qedhere
\end{case}
\end{proof}

Finally, recalling the element $h'_2$ from \cref{eq:GenConstructionH}, we
establish the following result for future reference.
\begin{lem}
The grading coming from the semisimple element $h'_2$ is good for~$e_2$.
Further, $\ad h'_2$ preserves $\fm_1$.
\label{prop:h2Good}
\end{lem}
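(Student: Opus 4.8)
The plan is to verify the two good-grading axioms \cref{GGprop1,GGprop2} (equivalently \cref{GGprop1,GGprop3}) for the $\ZZ$-grading $\Gamma'_2$ determined by $\ad h'_2$, and then separately check that $\ad h'_2$ preserves $\fm_1$. The first axiom, $e_2 \in (\fg)_2$, is essentially immediate: we already observed $[h'_2, e_2] = 2 e_2$ when $h'_2$ was introduced, so $e_2$ lies in the degree-$2$ part. The real content is the injectivity (equivalently surjectivity) of $\ad e_2$ in the appropriate degrees.

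For that, I would argue as follows. First, by the remark after \cref{GGprop6}, it suffices to check either \cref{GGprop2} or \cref{GGprop3}; I would aim for surjectivity of $\ad e_2 \colon (\fg)_{j} \to (\fg)_{j+2}$ for $j \ge -1$. A clean way to get at this is via a dimension count using the centraliser. By \cref{lem:EKCentraliserBasis}, we have an explicit basis of $\liez[\fg]{e_2}$ read off the pyramid $\lambda$ associated to $e_2$ (the covering partition). Using this, together with \cref{GGprop6} for $e_2$ — which holds for \emph{any} good grading, but here is exactly the identity we want to establish — one reduces to showing that the $h'_2$-eigenvalues are distributed correctly, i.e.\ that $\dim (\fg)_0 + \dim (\fg)_1 = \dim \liez[\fg]{e_2}$ \emph{and} that $\ad e_2$ has no kernel in negative degrees. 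Concretely, I would partition the basis elements $E_{k\ell}$ of $\fg$ (or $E^s_t[r]$ basis of the centraliser) according to their $h'_2$-degree, which by the definition \cref{eq:GenConstructionH} is a sum of two ``local'' pyramid degrees — the row-$s$ contribution for $s \ne i,j$ is the usual symmetric string $(\lambda_s - 1 - 2t)$, while for the $i,j$ rows the combined column structure gives the string $(\lambda_i + K - 2t)$ on the merged columns. The upshot is that $h'_2$ induces on $\CC^n$ precisely the grading one would get from the \emph{pyramid of shape $\lambda$} in which rows $i$ and $j$ are stacked into a single ``column block'' (up to the overall trace shift $K$), and the remaining rows keep their standard symmetric centering. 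Since such a pyramid-type grading is good for the nilpotent $e_2 = \sum_{k \to \ell} E_{k\ell}$ of shape $\lambda$ — this is exactly the content of the Elashvili–Kac classification quoted just before \cref{sec:SlodowyReduction}, since every pyramid grading is good — we conclude $\Gamma'_2$ is a good grading for $e_2$.

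The cleanest route therefore is: (1) read off from \cref{eq:GenConstructionH} that the action of $h'_2$ on the $k$-th basis vector $b$ of $\CC^n$ equals $(c_b - 2 t_b)$ where $t_b$ is the horizontal position of the box and $c_b$ is a constant depending only on the row-block; (2) observe this is the weight diagram of a genuine pyramid of shape $\lambda$ (the box positions satisfy the pyramid shift inequalities because rows $i,j$ are aligned by column in the construction \cref{eq:GenConstructionE}, and the other rows are unchanged); (3) invoke that every pyramid grading is good for its associated nilpotent. For the final sentence, that $\ad h'_2$ preserves $\fm_1$: since $\fm_1$ is spanned by root vectors $E_{\ell k}$ with $\col(k) < \col(\ell)$ in the \emph{original} right-aligned pyramid $\Gamma_1$, and each such $E_{\ell k}$ is an $\ad h'_2$-eigenvector (as $h'_2$ is diagonal), it is enough to note that $\ad h'_2$ merely rescales each such $E_{\ell k}$ — it cannot move it out of $\fm_1$. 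One still wants to confirm this eigenvector is genuinely in $\fm_1$ and not just in $\fg$, but that is immediate since $\fm_1$ is a span of coordinate root vectors and being diagonal, $\ad h'_2$ stabilises any such coordinate subspace.

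The main obstacle is step (2): verifying that the two-term formula \cref{eq:GenConstructionH} really does reproduce an honest pyramid shape for $\lambda$ rather than some ad hoc grading. One must check the ``increasing first boxes / decreasing last boxes'' condition of \cref{def:typeApyramid} for the merged $i$–$j$ block against the untouched rows, keeping track of the global shift constant $K$; the subtlety is the corner cases in \cref{eq:GenConstructionH} where the $E_{mm}$ term is dropped (no matching box) or the $t = \lambda_i$ term is dropped. I expect these boundary cases to require a short but careful bookkeeping argument showing the degrees still interlace correctly. Once that is pinned down, goodness is a black-box consequence of Elashvili–Kac and the rest is routine.
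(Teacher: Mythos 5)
Your route diverges from the paper's, and it has a gap that goes beyond the bookkeeping you flag. You propose to read off the $h'_2$-eigenvalues on $\CC^n$, recognise them as the column degrees of a pyramid of shape $\lambda$, and then invoke Elashvili--Kac to conclude the grading is good for ``its associated nilpotent $e_2 = \sum_{k\to\ell} E_{k\ell}$''. But $e_2$ is \emph{not} a pyramid nilpotent in the standard basis: by construction it is $e_1 + \sum E_{k\ell}$ with the extra column-matching terms from \cref{eq:GenConstructionE}, so it has more nonzero matrix entries than a filled $\lambda$-pyramid produces. (In \cref{eg:sl6middle}, $e_2 = E_{14}+E_{25}+E_{36}+E_{13}+E_{46}$ has five terms, whereas a $(3,2,1)$-pyramid nilpotent has three.) Consequently, even if the $h'_2$-eigenvalues match some pyramid's column coordinates, the associated pyramid nilpotent $e_P$ is a \emph{different} element, and you would still need to conjugate $e_2$ to $e_P$ by an element of the degree-zero subgroup $G_0$ --- equivalently, pass to a Jordan basis for $e_2$ and check that this basis diagonalises $h'_2$ --- before goodness for $e_2$ follows. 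That observation is exactly what the paper leans on, and it cannot be avoided.

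The paper's proof skips the pyramid identification entirely. It reduces \cref{GGprop2} to the statement $\smash{\bigoplus_{i<0}\fg_i \cap \liez[\fg]{e_2}} = \{0\}$ and then verifies it directly using the Elashvili--Kac centraliser basis of \cref{lem:EKCentraliserBasis}: the basis for $\liez[\fg]{e_2}$ looks like the one for $\liez[\fg]{e_1}$ except that, in rows $i$ and $j$, the standard basis vectors are replaced by the Jordan-basis linear combinations appearing in the proof of \cref{thm:typeAred}, and those combinations are $h'_2$-eigenvectors by \cref{eq:GenConstructionH} (the two basis vectors combined always sit in the same $h'_2$-eigenspace). Hence the centraliser basis elements are $h'_2$-homogeneous of nonnegative degree and GG2 holds, with no pyramid-shape inequalities to verify and no global shift constant $K$ to control. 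This is both shorter and avoids the conjugation issue altogether, so while your approach is not unsalvageable, it reintroduces precisely the work the paper was organised to bypass. Your treatment of the final sentence --- that $\ad h'_2$ preserves $\fm_1$ because $h'_2$ is diagonal and $\fm_1$ is spanned by matrix units --- matches the paper exactly.

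Concretely, to repair your argument you would need to (i) prove the Jordan basis for $e_2$ from \cref{thm:typeAred} consists of $h'_2$-eigenvectors (this is the same input the paper uses), (ii) rewrite $e_2$ and $h'_2$ in that basis, and only then (iii) check that the resulting degree sequence satisfies the pyramid-shift condition of \cref{def:typeApyramid}, with care at the dropped boundary terms in \cref{eq:GenConstructionH}. Once (i) is granted, the paper's centraliser argument discharges GG2 immediately, making (ii)--(iii) unnecessary detours.
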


\begin{proof}
By construction we have that $[h'_2, e_2] = 2e_2$, so all that remains is to
show that $\bigoplus_{i<0} \fg_i \cap \liez[\fg]{e_2} = \set{0}$, which is equivalent
to \cref{GGprop2}.
Considering the basis of the centraliser $\liez[\fg]{e_1}$ given in
\cref{lem:EKCentraliserBasis}, note that the basis of the centraliser
$\liez[\fg]{e_2}$ is closely related:
it differs only in that endomorphisms which involve basis vectors in rows~$i$
and~$j$ have these replaced by an appropriate linear combination of basis
vectors as in the proof of \cref{lem:EKCentraliserBasis}.
However, these linear combinations lie in the zero weight space of $h'_2$ by
construction;
this proves that any element of the centraliser cannot lie in strictly negative
degree.
That $\ad h'_2$ preserves $\fm_1$ follows immediately from the fact that $h'_2$
is diagonal.
\end{proof}

\subsection{Relation to W-algebras}

\begin{conj}
\label{conj:typeAred}
For nilpotents $e_1, e_2 \in \fg$ and subalgebras $\fm_1, \fm_2 \subseteq \fg$ as defined
in \cref{eq:GenConstructionE,eq:GenConstructionM}, the reduction by stages
$U(\fg,e_1) \qqq{\kappa} \fk \simeq U(\fg) \qqq{\chi_2} \fm_2$ is isomorphic to the
\W-algebra $U(\fg,e_2)$.
\end{conj}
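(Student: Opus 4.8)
By \cref{thm:typeAred}, together with the forthcoming \cref{thm:SlodRedByStagesIso} in which the stages map is shown to be an isomorphism, we already have $U(\fg,e_1) \qqq{\kappa} \fk \simeq U(\fg) \qqq{\chi_2} \fm_2$, so the whole content of \cref{conj:typeAred} is the single identification $U(\fg) \qqq{\chi_2} \fm_2 \simeq U(\fg,e_2)$. The plan is to deduce this from a general independence principle: for \emph{any} ad-nilpotent subalgebra $\fn \subseteq \fg$ with $\fn \cap \liez[\fg]{e_2} = 0$, with $\dim \fn = \tfrac{1}{2}\dim \vO_{e_2}$, and on which $\killing{e_2}{\cdot}$ restricts to a character, the reduction $\pp[\big]{U(\fg)/U(\fg)\fn_{\chi_2}}^{\fn}$ ought to be isomorphic to $U(\fg,e_2)$. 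By \cref{thm:PremetConjProps} the pair $(e_2,\fm_2)$ satisfies precisely these hypotheses. One would then fix the good grading $\Gamma_2$ of $e_2$ coming from $h'_2$ --- good by \cref{prop:h2Good} --- together with a genuine Premet subalgebra $\fm_2'$ adapted to it, so that $\pp[\big]{U(\fg)/U(\fg)(\fm_2')_{\chi_2}}^{\fm_2'} \simeq U(\fg,e_2)$ by \cref{def:Walg}, and the task becomes the comparison of the reductions by $\fm_2$ and by $\fm_2'$.

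The first step is the classical one, which should go through: on passing to associated graded algebras for the Kazhdan filtration, both reductions become Hamiltonian reductions of $\fg^*$ at $\chi_2$, of the expected dimension $\dim \liez[\fg]{e_2}$, and a transversality argument generalising \cite{GG:QuantSlod} identifies each with the Slodowy slice $\vS_{\chi_2}$ (for $\fm_2$ this can also be approached via classical reduction by stages from $\vS_{\chi_1}$). Thus $U(\fg) \qqq{\chi_2} \fm_2$ and $U(\fg,e_2)$ are both filtered quantisations of $\Cpoly{\vS_{\chi_2}}$, and what remains is to show they are \emph{the same} quantisation. Filtered quantisations of $\Cpoly{\vS_{\chi_2}}$ form a family --- parametrised, after Losev, by a space attached to the reductive part of $\liez[\fg]{e_2}$ --- in which $U(\fg,e_2)$ sits at a distinguished point, and one must verify that $U(\fg) \qqq{\chi_2} \fm_2$ lies at the same point. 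A natural route is to interpolate in the spirit of Gan--Ginzburg's proof of independence of the choice of Lagrangian, building a chain $\fm_2 = \fn_0, \fn_1, \dots, \fn_N = \fm_2'$ of ad-nilpotent subalgebras, each satisfying the hypotheses above, along which successive reductions are identified by explicit unipotent conjugations (``isotropic shifts'').

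The main obstacle is that $\fm_2$ is genuinely non-standard: in general $\fm_2$ meets $\fg_0$ for the $h'_2$-grading --- for instance $E_{43} \in \fm_1$ in \cref{eg:sl6middle} has $h'_2$-degree $0$ --- and, more strongly, it is not contained in the strictly negative part of \emph{any} good grading of $e_2$, since the terms of $e_2$ impose linear relations on a grading element that force certain of its diagonal entries to agree. Consequently the grading-adapted methods of Gan--Ginzburg and of Brundan--Goodwin, and Skryabin's equivalence of categories, do not apply directly: the interpolation must pass through subalgebras that are themselves not grading-adapted, and with no triangular decomposition or compatible good grading to lean on one must control, along the entire chain, exactly the quantisation parameter that singles out $U(\fg,e_2)$ within the family of quantisations of $\Cpoly{\vS_{\chi_2}}$. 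Ruling out an unwanted shift at this step is where the real difficulty lies.

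In type~A there is a more computational route, which covers the cases established in this paper and should in principle settle the conjecture in general: both $U(\fgl_n, e_1)$ and $U(\fgl_n, e_2)$ admit presentations as quotients of shifted Yangians, the effect of the further reduction by $\fk$ on $U(\fg,e_1) \qqq{\kappa} \fk$ can be tracked explicitly on the Yangian generators, and one then checks that the result matches the presentation of $U(\fgl_n, e_2)$. For $e_1$ subregular and for $n \le 4$ this can be pushed through by direct computation; the barrier to the general case is the combinatorial size of the generators rather than a conceptual gap.
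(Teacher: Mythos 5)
The statement you are asked to prove is stated in the paper as a \emph{conjecture}, and the paper does not prove it in general; the text explicitly frames it as a special case of a conjecture of Premet (proven by him only in positive characteristic), and establishes it only when $e_1$ is subregular and $e_2$ regular (Proposition~\ref{prop:typeAsubreg}) together with its mild generalisation in the following remark, and by direct computation in $\fsl_n$ for $n\le4$. Your proposal correctly recognises this, reduces the content to the single identification $U(\fg)\qqq{\chi_2}\fm_2\simeq U(\fg,e_2)$ via \cref{thm:typeAred} and \cref{thm:SlodRedByStagesIso}, and accurately diagnoses the obstruction: $\fm_2$ does not sit inside the strictly negative part of any good grading for $e_2$, so the Gan--Ginzburg and Brundan--Goodwin machinery that establishes independence of choices for genuine Premet subalgebras does not apply, and one must control a quantisation parameter attached to the reductive part of $\liez[\fg]{e_2}$ with no triangular decomposition to lean on. That analysis is faithful to why the statement remains a conjecture.

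Where your proposal diverges from what the paper actually does is in the verified cases. For $e_1$ subregular the paper does not interpolate or compute anything: it simply observes that the constructed $\fm_2$ \emph{is} the full maximal nilpotent $\fn^-$, which is a Premet subalgebra for the regular nilpotent, so the conjecture reduces to \cref{def:Walg}; the subsequent remark extends this by a compatible change of Lagrangian to all pairs $\mu=(\mu_1,\dotsc,\mu_k,1)$, $\lambda=(\mu_1,\dotsc,\mu_k+1)$. For $\fsl_4$ with $e_1$ of type $(2,2)$, the paper carries out an explicit Poisson-algebra computation exhibiting an isomorphism $\Cpoly{\vS_{\chi_2}}\simeq\Cpoly{\fg^*\qq{\chi_2}M_2}$ preserving brackets, and then appeals to uniqueness of the $G$-equivariant deformation quantisation of the slice. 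Both of these are considerably more elementary than the shifted-Yangian route you sketch, and neither touches Yangian presentations.

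Finally, your concluding claim that the Yangian route ``should in principle settle the conjecture in general'' with only combinatorial size as the barrier is in tension with the genuine conceptual obstacle you correctly identified two paragraphs earlier. Tracking the reduction by $\fk$ on Brundan--Kleshchev generators for the right-aligned pyramid of $\mu$ and matching the output against the presentation of $U(\fgl_n,e_2)$ for \emph{some} pyramid of $\lambda$ faces exactly the same difficulty: the generator formulae are tied to a good grading for $e_2$, and since $\fm_2$ is not adapted to any such grading there is no reason a priori that the images of the Yangian generators under the stages map close up into a recognisable set of relations without an additional uncontrolled twist. In other words, the ``quantisation parameter'' you flagged earlier does not disappear by passing to generators and relations; if it did, this would not still be a conjecture. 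So the proposal is an honest and largely accurate assessment of the problem, but it does not constitute a proof, and its claimed Yangian shortcut overstates what is known.
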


\begin{rem}
This conjecture is a special case of a more general conjecture due to Premet,
based on his work in \cite{Pre:TransSlice}.
Specifically, Premet conjectures that for any pair of subalgebra $\fm$ and
nilpotent $e$ which satisfy \crefrange{PremNil}{PremChar}, the quantum
Hamiltonian reduction $U(\fg) \qqq{\chi} \fm$ is isomorphic to the \W-algebra
$U(\fg,e)$.
In fact, Premet has proven this conjecture in the case that the base field is of
non-zero characteristic~$p$.
\end{rem}

\begin{prop}
\label{prop:typeAsubreg}
\Cref{conj:typeAred} holds for $e_1$ a subregular and $e_2$ a regular nilpotent.
\end{prop}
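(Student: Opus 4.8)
The plan is to check that, for $e_1$ subregular and $e_2$ regular, the construction of \cref{sec:GeneralConstruction} specialises to Kostant's Whittaker reduction, so that $U(\fg)\qqq{\chi_2}\fm_2$ is \emph{literally} one of the standard realisations of the \W-algebra $U(\fg,e_2)$.

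First I would pin down the combinatorial data. The subregular orbit in $\fsl_n$ is attached to the partition $(n-1,1)$, and by Gerstenhaber's classification (\cref{prop:OrbitCover}) the unique partition covering it is $(n)$; hence $e_2$ is automatically regular and the construction is forced, with $i=1$ and $j=2$. Taking the right-aligned pyramid for $\mu=(n-1,1)$ and numbering its boxes from bottom to top and left to right, one reads off from \cref{eq:GenConstructionE,eq:GenConstructionM} that $e_1=\sum_{k=1}^{n-2}E_{k,k+1}$, that $e_2=e_1+E_{n-1,n}=\sum_{k=1}^{n-1}E_{k,k+1}$, and that $\fk=\gen{E_{n,n-1}}$. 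Since $\Gamma_1$ is even one has $\fg_{-1}=0$, so that necessarily $\fl=0$ and $\fm_1=\bigoplus_{d\le-2}\fg_d$; here this is spanned by the $E_{k\ell}$ with $\ell<k\le n-1$ together with the $E_{n,\ell}$ with $\ell\le n-2$, i.e.\ $\fm_1$ is the algebra $\fn_-$ of strictly lower-triangular matrices with the single line $E_{n,n-1}$ removed. Adjoining $\fk$ reinstates that line, so $\fm_2=\fm_1\oplus\fk=\fn_-$; and $\chi_2=\killing{e_2}{\cdot}$ restricts to the character of $\fn_-$ that is non-zero precisely on the simple negative root vectors $E_{21},\dots,E_{n,n-1}$.

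Next I would identify the target as a \W-algebra. For the regular nilpotent $e_2$ the Dynkin grading puts $E_{k\ell}$ in degree $2(\ell-k)$ and is therefore even, so $\fg_{-1}=0$, the only Lagrangian is $\fl=0$, and the corresponding Premet subalgebra is $\bigoplus_{d\le-2}\fg_d=\fn_-$, with associated character $\killing{e_2}{\cdot}=\chi_2$. Hence \cref{def:Walg}, applied to this good grading and Lagrangian, produces exactly
\begin{equation*}
\pp[\big]{U(\fg)\big/U(\fg)(\fn_-)_{\chi_2}}^{\fn_-}=U(\fg)\qqq{\chi_2}\fm_2 ,
\end{equation*}
one of the admissible models of the \W-algebra $U(\fg,e_2)$. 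Since the \W-algebra is independent of the choice of good grading and Lagrangian up to isomorphism, $U(\fg)\qqq{\chi_2}\fm_2\cong U(\fg,e_2)$, which is the statement of \cref{conj:typeAred} in this case. (Equivalently: the classical reduction-by-stages hypotheses of \cref{cor:2StageReductionIso} hold for the Slodowy slices in question --- this is the classical Whittaker/Kostant picture for the regular nilpotent --- so that the reduction-by-stages homomorphism of \cref{thm:typeAred} is itself an isomorphism here.)

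I do not expect a real obstacle. Essentially all the content sits in the first step, namely checking that the horizontal shifts of the right-aligned pyramid make $\fm_1$ equal to $\fn_-$ with the $E_{n,n-1}$-line removed, so that $\fm_2=\fn_-$, and that $\chi_2$ is non-degenerate on the simple root spaces. The one point requiring a little care is that $U(\fg,e_2)$ must be computed using the \emph{Dynkin} (even) grading for $e_2$, rather than any grading inherited from $\Gamma_1$: by the remark following \cref{eq:GenConstructionH}, the element $h'_2$ need not have $\fm_2$ as its Premet subalgebra, but this is harmless precisely because the \W-algebra does not depend on such choices.
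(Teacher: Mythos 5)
Your proposal is correct and takes essentially the same route as the paper: the paper's own proof consists of the two observations that $\fm_2=\fn^-$ and that $\fn^-$ is a Premet subalgebra for the regular nilpotent $e_2$, so that $U(\fg)\qqq{\chi_2}\fm_2$ coincides by \cref{def:Walg} with $U(\fg,e_2)$. You have simply spelled out the pyramid combinatorics and the regular Dynkin grading that the paper leaves implicit; the final parenthetical appeal to classical reduction by stages is unnecessary once one has the Premet-subalgebra identification, but it does no harm.
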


\begin{proof}
The subalgebra $\fm_2$ constructed is simply the maximal nilpotent subalgebra of
lower-triangular matrices $\fn^-$.
This is a Premet subalgebra for $e_2$.
\end{proof}

\begin{rem}
The construction detailed in this \namecref{sec:GeneralConstruction} can be
modified slightly to give a stronger version of \cref*{prop:typeAsubreg}.
Instead of choosing a right-aligned pyramid of shape $\mu$, one can choose a
pyramid which is right-aligned but for a leftward shift of 1 at row~$i$ and
another leftward shift of 1 at row~$j+1$.
This necessitates a choice of Lagrangian $\fl \subseteq \fg_{-1}$;
this choice can be made so that the resulting Premet subalgebra can be extended
to a Premet subalgebra for a pyramid of shape~$\lambda$,
which is right-aligned but for a leftward shift of 1 at row~$i+1$ and another
leftward shift of 2 at row~$j$.
\begin{equation*}
\begin{picture}(200,113)(0,-13)
%\lining
% \lambda = (3,2,2,2,1)
\put(0,0)  {\line(1,0){60}} \put(0,20) {\line(1,0){60}}
\put(10,40){\line(1,0){40}} \put(10,60){\line(1,0){40}}
\put(10,80){\line(1,0){40}} \put(20,100){\line(1,0){20}}
\put(0,0)  {\line(0,1){20}} \put(20,0) {\line(0,1){20}}
\put(40,0) {\line(0,1){20}} \put(60,0) {\line(0,1){20}}
\put(10,20){\line(0,1){60}} \put(30,20){\line(0,1){60}}
\put(50,20){\line(0,1){60}}
\put(20,80){\line(0,1){20}} \put(40,80){\line(0,1){20}}
 \put(10,10){\makebox(0,0){{1}}}
 \put(30,10){\makebox(0,0){{5}}}
 \put(50,10){\makebox(0,0){{10}}}
 \put(20,30){\makebox(0,0){{2}}}
 \put(40,30){\makebox(0,0){{7}}}
 \put(20,50){\makebox(0,0){{3}}}
 \put(40,50){\makebox(0,0){{8}}}
 \put(20,70){\makebox(0,0){{4}}}
 \put(40,70){\makebox(0,0){{9}}}
 \put(30,90){\makebox(0,0){{6}}}
 \put(30,-13){\makebox(0,0){{$\mu = (3,2,2,2,1)$}}}

\put(100,40){\makebox(0,0){{<}}}

% \lambda = (3,3,2,1,1)
\put(140,0)  {\line(1,0){60}} \put(140,20) {\line(1,0){60}}
\put(140,40) {\line(1,0){60}} \put(150,60){\line(1,0){40}}
\put(150,80){\line(1,0){20}} \put(150,100){\line(1,0){20}}
\put(140,0)  {\line(0,1){40}} \put(160,0) {\line(0,1){40}}
\put(180,0) {\line(0,1){40}} \put(200,0) {\line(0,1){40}}
\put(150,40){\line(0,1){60}} \put(170,40){\line(0,1){60}}
\put(190,40){\line(0,1){20}}
 \put(150,10){\makebox(0,0){{1}}}
 \put(170,10){\makebox(0,0){{5}}}
 \put(190,10){\makebox(0,0){{10}}}
 \put(150,30){\makebox(0,0){{2}}}
 \put(170,30){\makebox(0,0){{7}}}
 \put(190,30){\makebox(0,0){{9}}}
 \put(160,50){\makebox(0,0){{3}}}
 \put(180,50){\makebox(0,0){{8}}}
 \put(160,70){\makebox(0,0){{4}}}
 \put(160,90){\makebox(0,0){{6}}}
 \put(170,-13){\makebox(0,0){{$\lambda = (3,3,2,1,1)$}}}
\end{picture}
\label{eq:AlternateConstruction}
\end{equation*}

For this new pyramid and compatible choice of Lagrangian, \cref{thm:typeAred}
remains true.
Furthermore, \cref{prop:typeAsubreg} and its proof hold not only for $e_1$ a
subregular and $e_2$ a regular nilpotent, but more generally for any pair of
nilpotent elements $e_1$ and $e_2$ of types $\mu = (\mu_1,\dotsc,\mu_k,1)$ and
$\lambda = (\mu_1,\dotsc,\mu_k+1)$, respectively.
\end{rem}

\begin{eg}
Consider $\fsl_4$, and $e_1$ a nilpotent of type $(2,2)$.
This is covered by the subregular nilpotent~$e_2$, and so the construction will
produce an algebra $U(\fg) \qqq{\chi_2} \fm_2$.
The associated graded algebra of $U(\fg, e_2)$ is the ring of functions on the
Slodowy slice $\vS_{\chi_2}$, and the associated graded algebra of the reduced
space is $\Cpoly[\big]{\fg^* \qq{\chi_2} M_2}$.
\begin{gather*}
\begin{aligned}
\vS_{\chi_2}
  & = \set*{
      \begin{pmatrix}
        a       & 1      & 0 & 0 \\
        b-3a^2  & a      & 1 & 0 \\
        c+20a^3 & b-3a^2 & a & d \\
        f       & 0      & 0 & -3a
      \end{pmatrix}
      \st a,b,c,d,f \in \CC} \\
\fg^* \qq{\chi_2} M_2
  & \simeq \set*{
      \begin{pmatrix}
        0                  & 1              & 1                 & 0 \\
        x + \frac{u+v}{4}  & 0              & 0                 & 1 \\
        \frac{-3u+v}{4}    & -2y            & 0                 & 1 \\
        z + \frac{u+v}{2}y & \frac{u-3v}{4} & x + \frac{u+v}{4} & 0
      \end{pmatrix}
      \st u,v,x,y,z \in \CC}
\end{aligned} \\
\begin{aligned}
\Cpoly[\big]{\vS_{\chi_2}}
  & = \Cpoly{a,b,c,d,f}, &
\Cpoly[\big]{\fg^* \qq{\chi_2} M_2}
  & = \Cpoly{u,v,x,y,z}
\end{aligned}
\end{gather*}
These are isomorphic as Poisson algebras, as shall be shown below.

Consider the ring homomorphism
$\varphi \colon \Cpoly[\big]{\vS_{\chi_2}} \to \Cpoly[\big]{\fg^* \qq{\chi_2} M_2}$
defined by
\begin{align*}
\varphi(a) & = \tfrac{-1}{3} y   & \varphi(b) & = x   & \varphi(c) & = 2z - \tfrac{8}{3} xy   &
\varphi(d) & = v + x + y^2       & \varphi(f) & = -u - x - y^2.
\end{align*}
The non-zero Poisson brackets are given by the formulae:
\begin{align*}
\poibr{a,d} & = \tfrac{-1}{24} d     & \poibr{c,d} & = \tfrac{1}{6} bd  & &&
  \poibr{u,y} & = \tfrac{1}{8}  (u + x + y^2) &
  \poibr{u,z} & = \tfrac{1}{4} x (u + x + y^2) \\
\poibr{a,f} & = \tfrac{1}{24} f      & \poibr{c,f} & = \tfrac{-1}{6} bf & &&
  \poibr{v,y} & = \tfrac{-1}{8} (v + x + y^2) &
  \poibr{v,z} & = \tfrac{-1}{4} x (v + x + y^2) \\
\poibr{d,f} & = \mathrlap{\tfrac{-27}{2} a^3 + ab - \tfrac{1}{8} c} & & & &&
  \poibr{u,v} & = \mathrlap{\tfrac{-1}{4} \pp[\big]{ z + xy + 2(u+v)y }}.
\end{align*}
It can be checked that this map is a ring isomorphism and also preserves the
Poisson bracket; it hence induces an isomorphism of the Poisson varieties
$\vS_{\chi_2} \simeq \fg^* \qq{\chi_2} M_2$.
Furthermore, this map preserves the characteristic polynomial.

Thus, we know that our algebra $U(\fg) \qqq{\chi_2} \fm_2$ is a deformation
quantisation of the Slodowy slice $\vS_{\chi_2}$.
Since there is, up to isomorphism, a unique such deformation quantisation which
is $G$\nb-equivariant, and the \W-algebra $U(\fg,e_2)$ is such a deformation
quantisation, it follows that $U(\fg) \qqq{\chi_2} \fm_2 \simeq U(\fg,e_1) \qqq{\kappa} \fk$
is isomorphic to $U(\fg,e_2)$.
\end{eg}

\section{The representation theory of W-algebras}
\label{chap:CatOWalg}

%%fakesubsection

The construction of quantum Hamiltonian reduction by stages has a number of
applications to the representation theory of \W-algebras.
\Cref{thm:typeAred} has an immediate corollary relating the categories of
modules over $U(\fg,e_1)$ and $U(\fg,e_2)$.

\begin{cor}
Let $e_1$ and $e_2$ be two nilpotent elements of $\fsl_n$ such that
$e_2$ covers~$e_1$ in the dominance ordering;
then the quantum Hamiltonian reduction by stages construction produces an
adjoint pair of functors
$\Mod{U(\fg,e_1)} \leftrightarrows \Mod{\pp[\big]{U(\fg) \qqq{\chi_2} \fm_2}}$.
If \cref{conj:typeAred} holds, then there exists an adjunction
$\Mod{U(\fg,e_1)} \leftrightarrows \Mod{U(\fg,e_2)}$ for any pair of nilpotents
$e_2 \ge e_1$.
\end{cor}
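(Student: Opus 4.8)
The plan is to extract the adjunction from the bimodule structure produced by quantum Hamiltonian reduction by stages. By \cref{thm:typeAred} (together with \cref{thm:2StageReduction}), we have a homomorphism of algebras $\varphi \colon U(\fg,e_1) \qqq{\kappa} \fk \to U(\fg) \qqq{\chi_2} \fm_2$, and the intermediate object $U(\fg,e_1) \big/ U(\fg,e_1)\Phi_K(\cI_\kappa)$ carries commuting actions: a left action of $U(\fg,e_1)$ by left multiplication, and a right action of $U(\fg) \qqq{\chi_2} \fm_2$ induced through $\varphi$ (the invariants picking out exactly the part where the right action is well-defined on cosets). So the first step is to verify carefully that
\begin{equation*}
B \coloneqq \pp[\big]{U(\fg,e_1) \big/ U(\fg,e_1)\Phi_K(\cI_\kappa)}^\fk
\end{equation*}
is a $\pp{U(\fg,e_1), U(\fg) \qqq{\chi_2} \fm_2}$-bimodule; this is essentially bookkeeping on \cref{fig:QHRstages}, using that $\fk$ is unipotent so invariants under $K$ and under $\fk$ coincide.

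Second, from any bimodule $B$ one obtains the standard adjoint pair $\pp{B \otimes_{U(\fg) \qqq{\chi_2} \fm_2} -} \dashv \Hom_{U(\fg,e_1)}(B, -)$ between the module categories, with $B \otimes_{U(\fg) \qqq{\chi_2} \fm_2} -$ going from $\Mod{\pp{U(\fg) \qqq{\chi_2} \fm_2}}$ to $\Mod{U(\fg,e_1)}$ and its right adjoint $\Hom_{U(\fg,e_1)}(B, -)$ going back. (Alternatively, and perhaps more naturally given the direction of $\varphi$, restriction along $\varphi$ composed with the invariants functor $(-)^\fk$ gives a functor $\Mod{U(\fg,e_1)} \to \Mod{\pp{U(\fg) \qqq{\chi_2} \fm_2}}$ with a left adjoint built from $U(\fg,e_1) \big/ U(\fg,e_1)\Phi_K(\cI_\kappa)$ and induction; one should spell out explicitly which functor is being named.) This yields the first assertion. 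For the second assertion, if \cref{conj:typeAred} holds then $U(\fg) \qqq{\chi_2} \fm_2 \simeq U(\fg,e_2)$, so the same adjunction reads as $\Mod{U(\fg,e_1)} \leftrightarrows \Mod{U(\fg,e_2)}$; and for a general pair $e_2 \ge e_1$ one factors the dominance relation into a chain of covers $e_1 = e^{(0)} \lessdot e^{(1)} \lessdot \dotsb \lessdot e^{(r)} = e_2$ (by \cref{prop:OrbitCover}) and composes the adjunctions stage by stage, using that a composite of left adjoints is left adjoint to the composite of right adjoints.

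The main obstacle is not deep but is the one place genuine care is required: pinning down precisely \emph{which} functor in each direction is being asserted, and checking that the invariants functor $(-)^\fk$ (equivalently $(-)^K$) behaves well enough — e.g. that it is exact on the relevant category, which holds because $K$ is unipotent — so that the adjunction survives taking invariants. One must also confirm that the bimodule $B$ is the right one, i.e. that the functor $B \otimes -$ is genuinely valued in $U(\fg,e_1)$-modules and not merely in modules over some larger algebra; this comes down to the identity
\begin{equation*}
\varphi \pp[\big]{(a + \Acoset[H]{\eta}) + \pp{\cAh \qqq{\eta} H} \Phi_K(\cI_\kappa)} = a + \Acoset{\gamma}
\end{equation*}
from the proof of \cref{thm:2StageReduction}, reinterpreted in the $\hbar \mapsto 1$ specialisation. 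Everything else — existence of the unit and counit, the triangle identities — is formal once the bimodule is in hand.
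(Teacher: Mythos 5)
Your overall strategy is the same as the paper's: identify a bimodule coming from the reduction by stages, derive the adjunction by tensor--hom, and compose along a chain of covering relations (\cref{prop:OrbitCover}) for a general pair $e_1 \le e_2$. The specialisation of $\Phi_K(\cI_\kappa)$ at $\hbar = 1$ is what the paper denotes $\fk_\kappa$, so that part is fine.

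However, there is a genuine slip in the definition of the bimodule. You correctly state first that the \emph{quotient} $U(\fg,e_1) \big/ U(\fg,e_1)\fk_\kappa$ carries commuting actions, but then define $B$ as the $\fk$\nb-invariants of that quotient. The invariants are not the bimodule --- they are the algebra $U(\fg,e_1) \qqq{\kappa} \fk \simeq U(\fg) \qqq{\chi_2} \fm_2$ itself, and they are \emph{not} a left $U(\fg,e_1)$\nb-module: if $\overline{b}$ is $\fk$\nb-invariant and $a \in U(\fg,e_1)$, then $[x, ab] = [x,a]b + a[x,b]$ for $x \in \fk$, and the first term $[x,a]b$ is not in $U(\fg,e_1)\fk_\kappa$ for general $a$, so left multiplication does not preserve the invariant subspace. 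The correct object, as the paper uses, is the quotient $U(\fg,e_1) \big/ U(\fg,e_1)\fk_\kappa$ (no invariants): it carries a left $U(\fg,e_1)$\nb-action by multiplication, and a right action by the subalgebra of invariants $\pp[\big]{U(\fg,e_1) \big/ U(\fg,e_1)\fk_\kappa}^\fk \simeq U(\fg) \qqq{\chi_2} \fm_2$, since right multiplication by an invariant coset is well-defined on the quotient precisely because of $\fk$\nb-invariance. Once $B$ is corrected in this way, the rest of your argument --- the tensor--hom adjunction, the appeal to \cref{conj:typeAred}, and the composition of adjunctions along a chain of covers --- is exactly what the paper does and is correct. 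The cautious remark you make at the end (``one must confirm that $B \otimes -$ is genuinely valued in $U(\fg,e_1)$\nb-modules'') was in fact pointing at the right worry; resolving it is exactly dropping the invariants from the definition of $B$.
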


\begin{proof}
Note that the quotient $U(\fg,e_1) / U(\fg,e_1)\fk_\kappa$ is a
$\pp[\big]{U(\fg,e_1), U(\fg) \qqq{\chi_2} \fm_2}$\nb-bimodule, where the left module
structure comes from left multiplication by $U(\fg,e_1)$ and the right module
structure comes from the fact that
$U(\fg) \qqq{\chi_2} \fm_2 \simeq \smash{\cramped{\pp[\big]{U(\fg,e_1) / \fk_\kappa}^\fk}}$.
This proves the existence of the first adjunction.

If \cref{conj:typeAred} holds, then the latter algebra is isomorphic to
$U(\fg,e_2)$.
Since adjunctions can be composed, and their composition is itself an
adjunction, we can form such an adjunction for any pair of nilpotents
$e_2 \ge e_1$ by composing along a sequence of covering relations.
\end{proof}

There are also applications to the \W-algebraic analogue of the
{\scshape \MakeLowercase{BGG}} category~$\cO$:
a full subcategory of $\Mod{U(\fg,e)}$ whose definition we shall recall in the
next \namecref{sec:CatOWalg}, and which has been studied in,
e.g.~\cite{BGK:HWTWAlg,Web:CatOWAlg}.
In \cite{Los:CatOWAlg}, Loseu investigates its structure and constructs an
equivalence between it and a certain subcategory of Whittaker modules in
$\Mod{U(\fg)}$.
The objective of this \namecref{chap:CatOWalg} is to prove a similar
equivalence in type~A, relating the \W-algebraic categories~$\cO$ for different
nilpotents to one another.
In what follows we shall always assume \cref{conj:typeAred} holds.

\subsection{Categories~\texorpdfstring{$\mathbfcal{O}$}{O} and other related
categories for W-algebras}

\label{sec:CatOWalg}

To discuss the categories~$\cO$ for the \W-algebra $U(\fg,e)$, we need to fix a
choice of parabolic subalgebra $\fp \subseteq \fg$ such that $(e,h,f)$ is contained in
the Levi subalgebra $\fl \subseteq \fp$.
Further, we shall fix a maximal torus $\ft$ of the centraliser $\liez[\fg]{e}$,
noting that $\ft \subseteq \fl$.

In place of the choice of parabolic, Loseu instead chooses a cocharacter~$\theta$
of~$T$ viewed as an element of~$\ft$;
this uniquely determines a parabolic as the positive eigenspaces of $\theta$.
Different choices of~$\theta$ will only matter inasmuch as they determine different
parabolics, so as far as we are concerned the two points of view are equivalent.

This choice of parabolic and maximal torus allows us to define a pre-order on
the weights of $\ft$:
$\lambda \ge \mu$ if and only if $\lambda - \mu$ is a linear combination of the weights of $\ft$
acting on $\fp$.
The existence of an embedding $U(\ft) \hookrightarrow U(\fg,e)$
(cf.~\cite[Theorem~3.3]{BGK:HWTWAlg}) allows any $U(\fg,e)$\nb-module to be
decomposed into generalised weight spaces with respect to $\ft$.

Note also that, as proven by Premet, $Z(\fg,e) \coloneqq Z(U(\fg,e))$ is isomorphic to
the ordinary centre of the universal enveloping algebra $U(\fg)$, and that the
natural map $Z(\fg) \to U(\fg,e)$ is an isomorphism onto the centre.
Thus, central characters of $U(\fg)$ can be translated to central characters of
$U(\fg,e)$.

This allows for a number of different full subcategories of $\Mod{U(\fg,e)}$ to
be defined, the objects of which satisfy various subsets of the following
conditions.
\begin{enumerate}[
  label=($\cO$\arabic*),
  ref=$\cO$\arabic*]
\item \label[condition]{cond:wOroof}
    The $\ft$\nb-weights are contained in a finite union of sets of the form
    $\set{\mu \st \mu \le \lambda}$.

\item \label[condition]{cond:wOfd}
    The generalised weight spaces with respect to $\ft$ are finite-dimensional.

\item \label[condition]{cond:wOtss}
    The action of $\ft$ on the module is semisimple.

%\item \label[condition]{cond:wOZc}
%    The centre $Z(U(\fg,e)) \simeq Z(\fg)$ acts by generalised central character~$\chi$.

\item \label[condition]{cond:wOZss}
    The action of $Z(\fg,e) \coloneqq Z(U(\fg,e))$ on the module is semisimple.
\end{enumerate}
The notation used for these categories differs amongst different papers.
We will mostly keep to a pared-down version of the notation used in Webster's
papers \cite{Web:CatOWAlg}, but since the machinery and proof of Loseu's work
\cite{Los:CatOWAlg} is extremely important here, we shall present it as well.
Loseu's notation leaves the nilpotent~$e$ implicit, which would render it
ambiguous in the context of this paper.
\begin{table}[h]
\centering
\begin{tabular}{cccccccc}
\toprule
Conditions
    & 1     %\labelcref*{cond:wOroof}
    & 1,2   %\labelcref*{cond:wOroof}, \labelcref*{cond:wOfd}
    & 1,3   %\labelcref*{cond:wOroof}, \labelcref*{cond:wOtss}
    & 1,2,3 %\labelcref*{cond:wOroof}, \labelcref*{cond:wOfd}, \labelcref*{cond:wOtss}
    & 1,2,4 %\labelcref*{cond:wOroof}, \labelcref*{cond:wOfd}, \labelcref*{cond:wOZss} \\
    \\
\midrule
Notation
    & $\widetilde{\cO}(e,\fp)$
    & $\widehat{\cO}(e,\fp)$
    &
    & $\cO(e,\fp)$
    & $\cO'\mspace{-4mu}(e,\fp)$ \\
Webster
    &
    & $\widehat{\cO}(\mathcal{W}_e,\fp)$
    &
    & $\cO(\mathcal{W}_e,\fp)$
    & $\cO'\mspace{-4mu}(\mathcal{W}_e,\fp)$ \\
Loseu
    & $\widetilde{\cO}(\theta)$
    & $\cO(\theta)$
    & $\widetilde{\cO}^\ft\mspace{-2mu}(\theta)$
    & $\cO^\ft\mspace{-2mu}(\theta)$
    & \\
\bottomrule
\end{tabular}
\caption{Definitions of the W-algebraic categories~$\cO$}
\begin{note}
The full subcategories on which the centre $Z(\fg,e)$ acts by a given
generalised central character $\xi$ are denoted $\smash{\widehat{\cO}(\xi,e,\fp)}$,
and so on.
\end{note}
\label{tab:WalgCatO}
\end{table}

\begin{rem}
There are a number of equivalent ways of phrasing the above conditions, some of
which are used in Loseu's original paper.

\noindent
\Cref{cond:wOroof} is equivalent to:
\begin{enumerate}
\item[{($\cO$1')}] $U(\fg,e)_{>0}$ acts by locally nilpotent endomorphisms.
\end{enumerate}
Further, \cref{cond:wOroof,cond:wOfd} together are equivalent to
\cref{cond:wOroof} and:
\begin{enumerate}
\item[{($\cO$2')}] The $U(\fg,e)^0$\nb-module obtained after taking
            $U(\fg,e)_{>0}$\nb-invariants is of finite dimension, where
            $U(\fg,e)^0 \coloneqq U(\fg,e)_{\ge0} / (U(\fg,e)U(\fg,e)_{>0} \cap U(\fg,e)_{\ge0}$.
\end{enumerate}
\end{rem}

In addition, there are a number of full subcategories of $\Mod{U(\fg)}$ of
interest to us; these are variations on subcategories of modules known as
\emph{generalised Whittaker modules}.
To define these categories, we need to fix a choice of maximal nilpotent
subalgebra $\fn$ along with a character $\chi \colon \fn \to \CC$;
we can then define the shifted Lie algebra $\fn_\chi \coloneqq \set{\xi - \chi(\xi) \st \xi \in \fn}$.
We will again consider full subcategories whose objects satisfy various subsets
of the following conditions.
\begin{enumerate}[
  label=($\mathsfup{Wh}$\arabic*),
  ref=$\mathsfup{Wh}$\arabic*]
\item \label[cond]{cond:Whitroof}
    The shifted Lie algebra $\fn_\chi$ acts by locally nilpotent endomorphisms.
\item \label[cond]{cond:WhitZlf}
    The action of the centre $Z(\fg)$ is locally finite.
\item \label[cond]{cond:Whittss}
    The action of $\ft$ on the module is semisimple.
\item \label[cond]{cond:WhitZss}
    The action of $Z(\fg)$ on the module is semisimple.
\end{enumerate}
\begin{table}[h]
\centering
\begin{tabular}{cccccccc}
\toprule
Conditions
    & 1     %\labelcref*{cond:Whitroof}
    & 1,2   %\labelcref*{cond:Whitroof}, \labelcref*{cond:WhitZlf}
    & 1,3   %\labelcref*{cond:Whitroof}, \labelcref*{cond:Whittss}
    & 1,2,3 %\labelcref*{cond:Whitroof}, \labelcref*{cond:WhitZlf}, \labelcref*{cond:Whittss}
    & 1,2,4 %\labelcref*{cond:Whitroof}, \labelcref*{cond:WhitZlf}, \labelcref*{cond:WhitZss}
      \\
\midrule
Notation
    & $\smash{\Whit(U(\fg), \fn_\chi)}$
    & %$\widehat{\cO}(\cdot,\xi)$
    &
    & %$\cO(\cdot,\xi)$
    & %$\cO'\mspace{-4mu}(\cdot,\xi)$
      \\[0.1em]
Webster
    &
    & $\smash{\bigoplus_\xi} \widehat{\cO}(\xi,\chi)$
    &
    & $\smash{\bigoplus_\xi} \cO(\xi,\chi)$
    & $\smash{\bigoplus_\xi} \cO'\mspace{-4mu}(\xi,\chi)$
      \\[0.1em]
Loseu
    & $\smash{\widetilde{\operatorname{Wh}}}(e,\theta)$
    & $\operatorname{Wh}(e,\theta)$
    & $\smash{\widetilde{\operatorname{Wh}}^{\raisebox{-0.3em}{$\scriptstyle \ft$}}}
        \mspace{-2mu}(e,\theta)$
    & $\smash{\operatorname{Wh}^{\ft}}\mspace{-2mu}(e,\theta)$
    & \\
\bottomrule
\end{tabular}
\caption{Definitions of the Whittaker categories}
\begin{note}
Here $\xi$ ranges over the set of generalised central characters of $U(\fg)$.
\end{note}
\label{tab:WhittakerCats}
\end{table}

%\begin{note}
%So long as we restrict to the case where~$e$ is regular in the Levi
%subalgebra~$\fl$,
%a choice of maximal nilpotent subalgebra $\fn$ with character~$\chi$ is equivalent
%to a choice of nilpotent element~$e$ with a parabolic subalgebra~$\fp$, as far
%as the categories we've defined are concerned.
%To obtain a parabolic $\fp_\chi$, choose a Cartan subalgebra $\fh$ for which $\fn$
%consists of the strictly positive weight spaces, consider $\Delta_\chi$ as the set of
%simple roots not killed by $\chi$, and let $\fp_f$ be generated by the Borel
%$\fh \oplus \fn$ and $\fg_{-\alpha}$ for all $\alpha \in \Delta_\chi$.
%Further, define the nilpotent $e = \smash{\sum_{\alpha \in \Delta_\chi}} E_\alpha$, which is
%regular in the Levi subalgebra $\fl_\chi \subseteq \fp_\chi$.
%
%To obtain a nilpotent from the choice $(e,\fp)$, consider $e$ as a regular
%nilpotent element of $\fl$ and construct a Premet subalgebra $\fm \subseteq \fl$: this
%is a maximal nilpotent subalgebra of $\fl$.
%Finally, consider the maximal nilpotent subalgebra $\fn \subseteq \fg$ generated by
%$\fm$ and the nilradical of $\fp$.
%As usual, consider the character $\chi \coloneqq \killing{e}{\bul} \colon \fn \to \CC$.
%Loseu's notation is thus well-defined.
%\end{note}

\begin{thm}
[{\cite[Theorem~4.1]{Los:CatOWAlg},
  \cite[Theorem~1.2.2(iii)]{Los:QuantSymplWalg},
  \cite[Proposition~7]{Web:CatOWAlg}}]
There are equivalences between each of columns of \cref{tab:WalgCatO} and the
corresponding columns of \cref{tab:WhittakerCats}.
These equivalences still hold if one restricts to a given generalised character
of $Z(\fg)$ and the corresponding character of $Z(\fg,e)$.
\label{thm:CatOWalgEquiv}
\end{thm}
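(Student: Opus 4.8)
The plan is to recover this statement by assembling the three cited results and checking that the categories introduced in \cref{tab:WalgCatO,tab:WhittakerCats} coincide, column by column, with those appearing in \cite{Los:CatOWAlg,Los:QuantSymplWalg,Web:CatOWAlg}, and then to read off the refinement over a fixed central character from Premet's description of the centre $Z(\fg,e)$.

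The functor I would use is (a Kazhdan-graded variant of) Skryabin's equivalence: writing $Q_\chi \coloneqq U(\fg) \big/ U(\fg)\fm_\chi$, the assignment $N \mapsto Q_\chi \otimes_{U(\fg,e)} N$ is an equivalence from $\Mod{U(\fg,e)}$ onto the full subcategory of $\Mod{U(\fg)}$ on which $\fm_\chi$ acts locally nilpotently, with quasi-inverse $M \mapsto M^\fm = \set{v \in M \st \fm_\chi v = 0}$. I would first choose the maximal nilpotent $\fn$ so that $\fm \subseteq \fn$ and $\chi$ extends to a character of $\fn$ compatible with the parabolic~$\fp$, write $\fn = \fm \oplus \fn_0$, and then match the defining conditions across this equivalence: local nilpotence of the part of $\fn_\chi$ coming from $\fn_0$ should correspond, through the embedding $U(\ft) \hookrightarrow U(\fg,e)$ of \cite[Theorem~3.3]{BGK:HWTWAlg} and the grading attached to~$\theta$, to local nilpotence of $U(\fg,e)_{>0}$ together with \cref{cond:wOroof}; the finite-dimensionality of the space of $\fm_\chi$-invariants should correspond to \cref{cond:wOfd} in its reformulation through $U(\fg,e)^0$; and semisimplicity of the $\ft$- and $Z(\fg)$-actions should correspond to \cref{cond:wOtss} and \cref{cond:wOZss}, since the torus and the centre act through the same subalgebras on either side. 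That these matchings hold is exactly the content of \cite[Theorem~4.1]{Los:CatOWAlg} for the first four columns (Loseu's $\widetilde{\cO}$, $\cO$, $\widetilde{\cO}^\ft$ and $\cO^\ft$) and of \cite[Proposition~7]{Web:CatOWAlg} for the fifth column (the one carrying \cref{cond:wOZss}); the structural result \cite[Theorem~1.2.2(iii)]{Los:QuantSymplWalg}, which identifies the Kazhdan-graded pieces $U(\fg,e)_{>0}$ and $U(\fg,e)^0$, is what makes the reformulations of \cref{cond:wOroof,cond:wOfd} legitimate, so I would invoke it for that.

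For the refinement over a fixed generalised character I would use Premet's theorem that the natural map $Z(\fg) \to U(\fg,e)$ is an isomorphism onto $Z(\fg,e)$: Skryabin's functor intertwines the action of $Z(\fg)$ on a Whittaker module with the action of this copy of $Z(\fg)$ inside $U(\fg,e)$, so the equivalence preserves the decomposition of each category into generalised $\xi$-eigenblocks, and hence restricts to an equivalence after fixing~$\xi$ on both sides. I expect the real work to lie not in any single step but in reconciling the conventions of the three sources: I will need to verify that Loseu's cocharacter~$\theta$ and the parabolic~$\fp$ induce the same pre-order on the weights of~$\ft$ (so that \cref{cond:wOroof} here agrees with Loseu's ``roof'' condition), that his finiteness hypothesis on $U(\fg,e)^0$-modules is precisely the finite-dimensionality condition used here, and that a maximal nilpotent $\fn \supseteq \fm$ to which $\chi$ restricts as a character can always be picked compatibly with~$\fp$. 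Each of these is routine, but it must be carried out carefully because the three papers' notational conventions do not line up.
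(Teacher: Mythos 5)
The paper does not prove this theorem: the three citations in the theorem header are the entire argument, and the result is subsequently generalised in \cref{thm:CatOEquiv}, whose proof routes through Loseu's $\heartsuit$--machinery and the decomposition lemma rather than through a direct Skryabin argument. Your plan therefore goes well beyond what the paper commits to on this point, and as an account of what the cited results say and how they combine it is broadly sound: Skryabin's equivalence is indeed the underlying functor, the condition--matching across it (with $\fn=\fm\oplus\fn_0$) is exactly what Loseu's Theorem~4.1 and Webster's Proposition~7 establish for the respective columns, and Premet's isomorphism $Z(\fg)\simeq Z(\fg,e)$ is what makes the central-character refinement automatic, since Skryabin's functor visibly intertwines the two $Z$-actions.

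One inaccuracy to flag: you describe \cite[Theorem~1.2.2(iii)]{Los:QuantSymplWalg} as ``identifying the Kazhdan-graded pieces $U(\fg,e)_{>0}$ and $U(\fg,e)^0$.'' That is not its role. That reference supplies the completed-algebra isomorphism (what this paper later calls the decomposition lemma, and what Loseu uses to produce the functor $\Phi_*$); it is the engine behind matching the $\ft$-weight and finiteness conditions through the equivalence, not a statement about graded pieces of $U(\fg,e)$. Put differently: without the decomposition lemma, the ``matching'' step you describe is not straightforward, because the conditions in \cref{tab:WalgCatO} are phrased on $U(\fg,e)$-modules using the internal grading and $\ft$-action, while the conditions in \cref{tab:WhittakerCats} are phrased on $U(\fg)$-modules; Loseu bridges this via the completed isomorphism, not by inspection of the Skryabin functor alone. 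If you are going to sketch the proof of the cited results rather than just invoke them, that is the step you should highlight as the technical core.

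Also be a little careful about the fifth column: Webster's Proposition~7 covers it, and it is the one for which Loseu's own table has no entry, since Loseu does not impose $Z$-semisimplicity. You correctly separate this out, which is the right reading of the three-way citation.
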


We shall provide a version of this theorem for which the Whittaker categories
lie not in $\Mod{U(\fg)}$, but rather in $\Mod{U(\fg,e')}$ for another nilpotent
$e' \le e$.

\subsection{Equivariant Slodowy slices}
\label{sec:EqvtSlodowy}

The remaining results of this paper %\crefrange{sec:EqvtSlodowy,sec:CatEquiv}
follow the techniques and methodology presented in Loseu's papers
\cite{Los:QuantSymplWalg,Los:FinRepWAlg,Los:CatOWAlg}, but translated to the
context of this paper.
We present them here for clarity and to highlight the changes necessary in our
situation.

Given a nilpotent element $e$ with a good grading~$\Gamma$ given by the semisimple
element $h'$, construct a $\Gamma$\nb-graded \hsl-triple $(e,h,f)$.
Based on this data, Loseu defines the \emph{equivariant Slodowy slice}:
\begin{equation}
\vS~_\chi \coloneqq G \times \vS_\chi \subseteq G \times \fg^* \simeq T^*G.
\end{equation}
This is a symplectic subvariety of the cotangent bundle $T^*G$, and is stable
under a number of group actions.
The group $G$ acts on itself both on the left and right by multiplication, which
this induces corresponding Hamiltonian actions on the cotangent bundle.
Loseu defines the following group actions:
\begin{itemize}
\item $G$ acts by $g \cdot (g_1, \alpha) = (g g_1, \alpha)$.
\item $\CC*$ acts by $t \cdot (g_1, \alpha) = (g_1 \gamma(t)^{-1}, t^{-2} \gamma(t) \alpha)$.
\end{itemize}
Here, $\gamma \colon \CC* \to G$ is the cocharacter determined by exponentiation of
the semisimple element~$h'$.
Recall also that $G$ acts on $\fg^*$ by the coadjoint action
$(g \alpha)(\xi) = \alpha(\Ad_{g^{-1}} \xi)$.

Choosing a Premet subgroup $M$ for $e$, we further define an action of
$M$ on $T^*G$:
\begin{itemize}
\item $M$ acts by $m \cdot (g_1, \alpha) = (g_1 m^{-1}, m \alpha)$.
\end{itemize}
This has moment map $\mu \colon G \times \fg^* \to \fm^*$ given by
$\mu(g,\alpha) = \smash{\res{\alpha}{\fm}}$.
It is therefore clear that $T^*G \qq{\chi} M \simeq \vS~_\chi$, where this is
the usual symplectic Hamiltonian reduction.

Having translated the problem of Hamiltonian reduction of Slodowy slices as
Poisson varieties to that of Hamiltonian reduction of equivariant Slodowy slices
as symplectic varieties, we can now state the following
\namecref{thm:SlodRedByStagesIso}.

\begin{thm}
\label{thm:SlodRedByStagesIso}
The homomorphism $U(\fg,e_1) \qqq{\kappa} \fk \to U(\fg) \qqq{\chi_2} \fm_2$ of
\cref{thm:typeAred} is an isomorphism.
\end{thm}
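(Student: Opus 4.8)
The plan is to deduce the theorem from \cref{cor:2StageReductionIso}, applied with $\cAh = \Uh(\fg)$ (a deformation quantisation of the Poisson manifold $\fg^*$), with $H = M_1$, $K$ and $G = M_2$ the connected unipotent groups of $\fm_1$, $\fk$ and $\fm_2 = \fm_1 \rtimes \fk$, and $\gamma = \chi_2$. By that \namecref{cor:2StageReductionIso} it suffices to show that the classical Hamiltonian reduction by stages yields a Poisson-algebra isomorphism
\begin{equation*}
\pp[\big]{\Cpoly{\fg^*} \qq{\chi_1} M_1} \qq{\kappa} K \isoto \Cpoly{\fg^*} \qq{\chi_2} M_2.
\end{equation*}
Since $\fg^*$ with its Lie--Poisson bracket is not symplectic, I would not invoke the reduction-by-stages machinery for it directly, but would instead realise both sides on the symplectic cotangent bundle $T^*G$, as prepared in \cref{sec:EqvtSlodowy}, and then pass to invariants for the left $G$-action.

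On $T^*G \simeq G \times \fg^*$ I would take the commuting actions: $G$ by $g \cdot (g_1,\alpha) = (g g_1, \alpha)$, and each of $M_1$, $K$, $M_2$ by $m \cdot (g_1,\alpha) = (g_1 m^{-1}, m \alpha)$, with moment maps $(g_1,\alpha) \mapsto \res{\alpha}{\fm_1}$, $\res{\alpha}{\fk}$ and $\res{\alpha}{\fm_2}$. Each of $M_1$, $K$, $M_2$ acts \emph{freely and properly} on $T^*G$, since right translation of a closed subgroup of $G$ is free and proper; hence each of these moment maps is a submersion, $\chi_1$, $\chi_2$ and $\kappa$ lie in the respective images, and the reduced spaces are smooth symplectic manifolds. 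By \cref{rem:NilpGens} we have $M_1 \trianglelefteq M_2 = M_1 \rtimes K$; under the induced splitting $\fm_2^* \simeq \fm_1^* \times \fk^*$ the value $\chi_2$ decomposes as $(\chi_1,\kappa)$ (\cref{cond:SubalgCharacter}), and since $\fk$ annihilates both $\chi_1$ and $\chi_2$ (\cref{cond:KStabilise}, re-derived in the proof of \cref{thm:typeAred}) the functional $\chi_1$ is fixed by all of $M_2$ and $\kappa$ is fixed by $K$. These are exactly the hypotheses of the Hamiltonian reduction by stages theorem for a semidirect product \cite[§5.3]{MMO:HamRed}, which therefore gives a $G$-equivariant symplectomorphism
\begin{equation*}
\Psi \colon \pp[\big]{T^*G \qq{\chi_1} M_1} \qq{\kappa} K \isoto T^*G \qq{\chi_2} M_2.
\end{equation*}
Here the source is the equivariant Slodowy slice $\vS~_{\chi_1} = G \times \vS_{\chi_1}$ (as $\fm_1$ is a genuine Premet subalgebra), but on the target I would be careful \emph{not} to identify $T^*G \qq{\chi_2} M_2$ with $G \times \vS_{\chi_2}$, since $\fm_2$ need not be a Premet subalgebra for $e_2$.

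To conclude, I would pass to left-$G$-invariant functions. Because the left $G$-action is free and proper and commutes with all the moment maps above, a direct computation with invariant rings gives $\Cpoly[\big]{T^*G \qq{\chi_2} M_2}^G \simeq \pp[\big]{\Cpoly{\fg^*} \big/ I_{\chi_2}}^{M_2} = \Cpoly{\fg^*} \qq{\chi_2} M_2$ and, in the same way, $\Cpoly[\big]{\pp{T^*G \qq{\chi_1} M_1} \qq{\kappa} K}^G \simeq \pp[\big]{\Cpoly{\fg^*} \qq{\chi_1} M_1} \qq{\kappa} K$ (recall $\Cpoly{\fg^*} \qq{\chi_1} M_1 = \Cpoly{\vS_{\chi_1}} = \gr U(\fg,e_1)$). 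Hence $\Psi^*$ restricts to a Poisson isomorphism between precisely the two algebras of the first display, so the classical reduction by stages for $\fg^*$ holds; both $\Psi$ and the homomorphism $\varphi$ of \cref{thm:2StageReduction} being ``the identity on representatives'' by construction, this isomorphism is the one induced by $\varphi$. Therefore \cref{cor:2StageReductionIso} applies and $\varphi$ --- equivalently the homomorphism of \cref{thm:typeAred} --- is an isomorphism.

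The step I expect to be the main obstacle is checking that the hypotheses of the reduction-by-stages theorem of \cite{MMO:HamRed} are satisfied \emph{verbatim} here --- in particular that the $K$-action induced on the first reduced space $T^*G \qq{\chi_1} M_1$ is again free and proper and that the induced $K$-moment map takes the value $\kappa$ there, so that the semidirect-product version applies without any modification. A second point, requiring care rather than difficulty, is the equivariance bookkeeping for \cref{cor:2StageReductionIso}: one must check that $\Psi$ is genuinely $G$-equivariant and realised by algebraic (not merely smooth) maps, so that taking $G$-invariants legitimately identifies $\Psi^*$ with the map induced by $\varphi$.
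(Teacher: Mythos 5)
Your argument is correct and follows essentially the same route as the paper's proof: lift to the symplectic manifold $T^*G$ via the equivariant Slodowy slice construction of \cref{sec:EqvtSlodowy}, invoke classical Hamiltonian reduction by stages for semidirect products from \cite{MMO:HamRed}, take $G$\nb-invariants to recover the Poisson reduction of Slodowy slices, and conclude by \cref{cor:2StageReductionIso}. The paper states this very tersely (citing \cite[Theorem~4.2.2]{MMO:HamRed}), whereas you spell out the verification of freeness/properness of the $M_i$ and $K$ actions and the decomposition of $\chi_2 = (\chi_1,\kappa)$, which the paper leaves implicit.
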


\begin{proof}
After taking $G$\nb-invariants of this symplectic reduction, we obtain the
previous Poisson reduction of Slodowy slices.
By \cref{cor:2StageReductionIso}, it therefore suffices to prove that the
quantum Hamiltonian reduction induces an isomorphism of the classical
Hamiltonian symplectic reductions.
However this follows by classical symplectic Hamiltonian reduction by stages for
semidirect products, which can be found in,
e.g.~\cite[Theorem~4.2.2]{MMO:HamRed}.
\end{proof}

Recall now the constructions of \cref{chap:TypeA}.
For any $e_1 \in \fsl_n$ and any nilpotent orbit $\vO_2$ which covers the orbit of
$e_1$, we produce a Premet subalgebra $\fm_1$, a nilpotent $e_2 \in \vO_2$, a
subalgebra $\fm_2$, and a semisimple element~$h'_2$ which gives a grading~$\Gamma$
which is good for~$e_2$.
Choosing $(e_2, h_2, f_2)$ to be a $\Gamma$\nb-graded \hsl-triple, we can define
the Slodowy slice $\vS_{\chi_2}$.
The reduction by stages construction of \cref{chap:TypeA} therefore produces the
following commutative diagram:
\begin{equation}
\begin{tikzcd}[]
T^*G                    \rar[-,double equal sign distance]
  & T^*G                                                                \\
G \times \pp{\chi_2 + \fm_2^{\bot}}     \uar[hookrightarrow] \rar[hookrightarrow]{\iota}
  & G \times \pp{\chi_1 + \fm_1^{\bot}} \uar[hookrightarrow]                            \\
\vS~_{\chi_2}           \uar[twoheadleftarrow,xshift=.60ex]
                            \uar[hookrightarrow,xshift=-.60ex]
                            \rar[hookrightarrow]{\varphi}
  & \vS~_{\chi_1}       \uar[twoheadleftarrow,xshift=.60ex]
                            \uar[hookrightarrow,xshift=-.60ex]
\end{tikzcd}
\label{eq:EqvtSlodowyInclusions}
\end{equation}
Here, the vertical maps
$G \times \smash{\pp{\chi_i + \fm_i^{\bot}}} \hookrightarrow T^*G$
and
$G \times \smash{\pp{\chi_i + \fm_i^{\bot}}} \twoheadrightarrow
  \smash{\vS~_{\chi_i}}$
are the natural maps coming from Hamiltonian reduction, while the inclusions of
$\vS~_{\chi_i}$ come from the natural presentation
$\smash{\vS_{\chi_i} = \chi_i + \pp{\fg/[\fg,f_i]}^* \subseteq \chi_i + \fm_i^\bot}$.
The map $\iota$ is the natural extension of the inclusion
$\chi_2 + \fm_2^\bot \hookrightarrow \chi_1 + \fm_1^\bot$,
and $\varphi$ is defined as the obvious composition of maps.

In this context, we shall consider the following additional actions on $T^*G$,
which preserve each of $\smash{\vS~_{\chi_i}}$:
\begin{itemize}
\item $Q \coloneqq Z_G(e_1,h_1,f_1) \cap Z_G(e_2,h_2,f_2) \cap Z_G(h'_2)$ acts by
        $g_0 \cdot (g_1, \alpha) = (g_1 g_0^{-1}, g_0 \alpha)$.
\item $\widetilde{G} \coloneqq G \times \CC* \times Q$, which acts component-wise.
\end{itemize}

\begin{lem}
The map $\varphi$ is a $\widetilde{G}$\nb-equivariant embedding of symplectic
manifolds.
\end{lem}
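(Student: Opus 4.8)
The plan is to unwind $\varphi$ from the diagram \eqref{eq:EqvtSlodowyInclusions} and then check, in turn, that it is injective, a closed embedding, symplectic, and $\widetilde G$-equivariant, in each case transporting a known property of a single-stage Slodowy reduction across the reduction by stages of \cref{thm:SlodRedByStagesIso}. Concretely, identify $\vS~_{\chi_2}$ with the global slice $G \times \vS_{\chi_2}$ inside $\mu_{M_2}^{-1}(\chi_2) = G \times (\chi_2 + \fm_2^{\bot})$. Since $\fm_1 \subseteq \fm_2$ and, by \cref{cond:SubalgCharacter}, $\chi_2$ restricts to $\chi_1$ on $\fm_1$, one has $\chi_2 + \fm_2^{\bot} \subseteq \chi_1 + \fm_1^{\bot}$, so $\iota$ embeds $G \times \vS_{\chi_2}$ into $\mu_{M_1}^{-1}(\chi_1)$ and $\varphi$ is the composite with the quotient $\mu_{M_1}^{-1}(\chi_1) \twoheadrightarrow \vS~_{\chi_1}$; thus $\varphi$ carries the class of $(g,\alpha)$ with $\alpha \in \vS_{\chi_2}$ to the $M_1$-orbit of $(g,\alpha)$.

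For injectivity, suppose $(g,\alpha)$ and $(g',\alpha')$ with $\alpha,\alpha' \in \vS_{\chi_2}$ have the same $\varphi$-image; then $\alpha' = m\alpha$ for some $m \in M_1 \subseteq M_2$. Because $\fm_1$ is an ideal of $\fm_2$ (\cref{rem:NilpGens}) and $\chi_2$ is a character of $\fm_2$, the fibre $\chi_2 + \fm_2^{\bot}$ is $M_1$-stable, so $\alpha$ and $\alpha'$ lie on one $M_2$-orbit inside $\mu_{M_2}^{-1}(\chi_2)$; as $\vS_{\chi_2}$ meets each such orbit exactly once, $m = e$ and $\varphi$ is injective. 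The same transversality shows that the image of $G \times \vS_{\chi_2}$ in $\vS~_{\chi_1}$ lies in $\mu_K^{-1}(\kappa)$ (the $\fk$-component of any $\alpha \in \vS_{\chi_2}$ being $\kappa$, by \cref{cond:SubalgCharacter}), meets each $K$-orbit there exactly once, and generates $\mu_K^{-1}(\kappa)$ under $K$; that is, under the symplectomorphism $\vS~_{\chi_2} \simeq \vS~_{\chi_1}\qq{\kappa}K = \mu_K^{-1}(\kappa)/K$ provided by the classical shadow of \cref{thm:SlodRedByStagesIso} (reduction by stages for the semidirect product $M_2 = M_1 \rtimes K$), the map $\varphi$ is a section $s$ of $\mu_K^{-1}(\kappa) \twoheadrightarrow \mu_K^{-1}(\kappa)/K$ followed by the inclusion $\mu_K^{-1}(\kappa) \hookrightarrow \vS~_{\chi_1}$. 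Since $K$ is unipotent (indeed abelian), that quotient is a trivial bundle, so $s$ — hence $\varphi$ — is a closed embedding, and $\varphi^{*}\omega_1 = s^{*}\bigl(\omega_1|_{\mu_K^{-1}(\kappa)}\bigr) = \omega_{\mathrm{red}} = \omega_2$, the last identification being symplectic by reduction by stages; thus $\varphi$ realises $\vS~_{\chi_2}$ as a symplectic submanifold of $\vS~_{\chi_1}$.

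For $\widetilde G$-equivariance: equivariance under the left $G$-action is immediate, as that action is left translation on the $G$-factor and fixes every other ingredient of the construction. The group $Q = Z_G(e_1,h_1,f_1) \cap Z_G(e_2,h_2,f_2) \cap Z_G(h'_2)$ preserves $\fm_1$, $\fm_2$, $\chi_1$, $\chi_2$, both Slodowy slices, and all the $M_i$-actions, so acts compatibly at every node of \eqref{eq:EqvtSlodowyInclusions}. The $\CC*$-factor acts through the cocharacter $\gamma_2$ exponentiating $h'_2$: it preserves $\vS~_{\chi_2}$ by construction, and it preserves $\vS~_{\chi_1}$ because $\ad h'_2$ stabilises $\fm_1$ (\cref{prop:h2Good}) and $[h'_2,e_1] = 2e_1$ — the matrix units occurring in $e_1$ being among those occurring in $e_2$, on which $h'_2$ has weight $2$ — so that $t^{-2}\gamma_2(t)$ normalises $M_1$ and fixes $\chi_1$; both $\CC*$-actions are then intertwined by $\iota$ and the quotient maps, hence by $\varphi$.

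The main obstacle — and essentially the only non-formal step — is the transversality used above: that $G \times \vS_{\chi_2}$ is simultaneously a global slice for the $M_2$-action on $\mu_{M_2}^{-1}(\chi_2)$ and, after descent to $\vS~_{\chi_1}$, a global slice for the induced $K$-action on $\mu_K^{-1}(\kappa)$. As $\fm_2$ is not a Premet subalgebra for the $h'_2$-grading, the Gan--Ginzburg slice theorem does not apply verbatim; one must instead invoke Premet's transversality for pairs satisfying \crefrange{PremNil}{PremChar}, available here by \cref{thm:PremetConjProps}, and check that the slice it produces respects the decomposition $\fm_2 = \fm_1 \rtimes \fk$, i.e.\ descends to a slice for the residual $K$-action.
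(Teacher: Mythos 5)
Your argument for injectivity and for $(G\times Q)$- and $\CC*$-equivariance runs parallel to the paper's: injectivity comes down to the observation that two points identified by $\varphi$ lie in one $M_1$-orbit, hence in one $M_2$-orbit, and then the transversality of the slice to $M_2$-orbits forces equality; equivariance under $G$ and $Q$ is read off the construction, and the $\CC*$-action on $\vS~_{\chi_1}$ is justified by \cref{prop:h2Good} together with the (correct, but unstated in the paper) observation that $[h'_2,e_1]=2e_1$. Where you diverge is the symplectic claim. The paper argues directly: both reduced forms are computed by lifting tangent vectors from $\vS~_{\chi_i}$ into $G\times(\chi_i+\fm_i^\bot)$ and evaluating the canonical form on $T^*G$; since the $M_i$-orbits form the respective nilfoliations and $M_1\subseteq M_2$, any $M_1$-lift is in particular an $M_2$-lift, so the two computations coincide and $\varphi^*\omega_1=\omega_2$. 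You instead route through the classical shadow of \cref{thm:SlodRedByStagesIso}: identify $\vS~_{\chi_2}$ with $\vS~_{\chi_1}\qq{\kappa}K$, show the image of $\varphi$ lands in $\mu_K^{-1}(\kappa)$, realise $\varphi$ as a section $s$ of the quotient $\mu_K^{-1}(\kappa)\twoheadrightarrow\mu_K^{-1}(\kappa)/K$, and use the general fact that for a section $(i\circ s)^*\omega = s^*\pi^*\omega_{\mathrm{red}} = \omega_{\mathrm{red}}$. That is correct, but it is the longer path: it requires establishing that $\varphi$ factors through $\mu_K^{-1}(\kappa)$, that the quotient bundle is trivial, and that the reduction-by-stages symplectomorphism matches the one coming from $M_2$; the paper's nilfoliation comparison gets the same conclusion in one stroke without any of that scaffolding. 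What your route buys is perhaps a cleaner conceptual picture — $\varphi$ literally exhibiting $\vS~_{\chi_2}$ as the second-stage reduction — but it does not give you a stronger statement.

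Your closing caveat about transversality is a good observation and not addressed in the paper's proof of this lemma. The step $x=y$ at the end of the injectivity argument requires that $\vS~_{\chi_2}$ meets each $M_2$-orbit in $G\times(\chi_2+\fm_2^\bot)$ at most once, and since $\fm_2$ is not a Premet subalgebra for the $h'_2$-grading, the Gan--Ginzburg slice theorem does not apply as stated. The paper takes this for granted; you correctly note that one must instead appeal to Premet's transversality for pairs satisfying \crefrange{PremNil}{PremChar}, which are supplied by \cref{thm:PremetConjProps}. This same transversality also underlies your factorisation of $\varphi$ through $\mu_K^{-1}(\kappa)$, so it really is the single nontrivial input to both proofs, and your write-up is more careful than the paper's about isolating it.
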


\begin{proof}
That $\varphi$ is $(G \times Q)$\nb-equivariant is manifest from the construction.
To see that $\varphi$ is injective, note that $\varphi(x) = \varphi(y)$ if and only if $\iota(x)$
and $\iota(y)$ lie in the same $M_1$\nb-orbit.
But then $x$ and $y$ lie in the same $M_2$\nb-orbit, as $M_1 \subseteq M_2$, which would
imply that $x = y$.

To see that $\varphi$ is symplectic, note that the $M_i$\nb-orbits form a nilfoliation
of $\smash{\vS~_{\chi_i}}$ in $\smash{G \times \pp{\chi_i + \fm_i^\bot}}$.
Hence, lifting along the $M_2$\nb-orbits can equally well be accomplished by
restricting to lifting along $M_1$\nb-orbits, and so the symplectic forms
will agree.

As in the classical case, the cocharacter $\gamma$ associated to the semisimple
element $h'_2$ gives an action of $\CC*$ on $\smash{\vS~_{\chi_2}}$.
By \cref{prop:h2Good}, the adjoint action $\Ad_{\gamma(t)}$ stabilises $\fm_1$, and
hence this also gives a well-defined action on $\smash{\vS~_{\chi_1}}$.
The map $\varphi$ intertwines these two actions, and in both cases scales the
symplectic form: $t \cdot \omega = t^2 \omega$.
\end{proof}

Putting these facts together yields the following
\namecref{thm:SlodowyEmbedding}.

\begin{thm}
\label{thm:SlodowyEmbedding}
For any pair of nilpotent elements $e_1 \le e_2$ in $\fsl_n$ in the dominance
ordering, there is a $(G \times Q)$\nb-equivariant embedding of symplectic
manifolds $\smash{\vS~_{\chi_2} \hookrightarrow \vS~_{\chi_1}}$.
Taking $G$\nb-invariants yields an embedding of Poisson manifolds
$\smash{\vS_{\chi_2} \hookrightarrow \vS_{\chi_1}}$.
Furthermore, there exist $\CC*$\nb-actions on both sides, intertwined by the
embedding, which scale the symplectic forms (resp.~Poisson bivectors) by a
factor of $t^2$.
This $\CC*$\nb-action is a contracting action on $\vS_{\chi_2}$.
\end{thm}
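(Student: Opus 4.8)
The plan is to obtain the theorem from the preceding lemma in the case where $\vO_2$ covers $\vO_1$, to reduce the general case to that one by composing along a chain of covering relations, and then to descend everything from equivariant Slodowy slices to ordinary Slodowy slices by taking $G$\nb-invariants. In the covering case the data $e_1,\fm_1,e_2,\fm_2,h'_2$ of \cref{sec:GeneralConstruction} is at hand, and \cref{thm:typeAred} together with \cref{prop:h2Good} supply exactly the hypotheses behind \cref{eq:EqvtSlodowyInclusions} and the $\widetilde{G} = G\times\CC*\times Q$\nb-actions on the equivariant Slodowy slices. The preceding lemma then says that $\varphi\colon\vS~_{\chi_2}\hookrightarrow\vS~_{\chi_1}$ is $\widetilde{G}$\nb-equivariant, with $G\times Q$ acting by symplectomorphisms and the $\CC*$\nb-factor --- the one attached to the cocharacter $\gamma$ of $h'_2$ --- scaling the symplectic form, $t\cdot\omega = t^2\omega$; this is precisely the covering case of the theorem's assertions about the embedding $\vS~_{\chi_2}\hookrightarrow\vS~_{\chi_1}$.

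For a general pair $e_1\le e_2$ I would choose a chain of nilpotent orbits $\vO_1 = \vO^{(0)} < \dotsb < \vO^{(m)} = \vO_2$ in which each step is a covering relation --- these are described by \cref{prop:OrbitCover}, and such a chain exists because the dominance ordering is graded --- apply the covering case at each step, and compose. The subtlety is that the equivariant Slodowy slice appearing as the target of one step and the one appearing as the source of the next are built from a priori different choices of nilpotent representative, compatible \hsl-triple, and good grading inside the intervening orbit; since any two such choices within a fixed orbit are $G$\nb-conjugate, I would conjugate so that they agree before composing. A composite of closed $G$\nb-equivariant symplectic embeddings is again one, so, after performing these conjugations, taking $Q \coloneqq \bigcap_i Q^{(i)}$ --- which still contains a maximal torus of $\liez[\fg]{e_2}$, all that later applications need --- produces the $(G\times Q)$\nb-equivariant symplectic embedding $\vS~_{\chi_2}\hookrightarrow\vS~_{\chi_1}$ together with the intertwined, $t^2$\nb-scaling $\CC*$\nb-action of the final step. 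I expect this bookkeeping to be the main obstacle: iterating the construction requires a coherent choice of right-aligned pyramids and Premet subalgebras along the chain, and since the $h'_i$\nb-grading carries a nilpotent subalgebra different from the $\fm_i$ used at the following stage, one must re-conjugate into an aligned frame at each stage rather than applying \cref{sec:GeneralConstruction} verbatim; everything else in the composition is formal.

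Finally I would pass to $G$\nb-invariants. By definition $\vS~_{\chi_i} = G\times\vS_{\chi_i}$ with $G$ acting by left translation on the first factor, so any $G$\nb-equivariant map between two such spaces has the form $(g,s)\mapsto(g,\psi(s))$, and restricting to $\{1\}\times\vS_{\chi_i}$ --- equivalently, passing to the $G$\nb-invariant functions $\Cpoly{\vS_{\chi_i}} = \Cpoly{\vS~_{\chi_i}}^G$ --- extracts a closed embedding $\psi\colon\vS_{\chi_2}\hookrightarrow\vS_{\chi_1}$. It is Poisson because $\varphi$ is symplectic, hence a Poisson morphism, and the Poisson bracket on each $\vS_{\chi_i}$ is the restriction to $G$\nb-invariant functions of the symplectic bracket on $\vS~_{\chi_i}$, which by \cref{sec:SlodowySlices} is the usual Slodowy-slice bracket (the Lie--Poisson bracket of $\fg^*$ restricted to the slice). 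The $\CC*$\nb-action of the previous step commutes with $G$ and so descends to $\CC*$\nb-actions on $\vS_{\chi_2}$ and $\vS_{\chi_1}$ that are intertwined by $\psi$ and scale the Poisson bivectors by $t^2$; and on $\vS_{\chi_2}$ it is the Kazhdan action attached to the good grading $h'_2$, whose unique fixed point is $\chi_2$ and which is contracting because $\gr U(\fg,e_2) = \Cpoly{\vS_{\chi_2}}$ is non-negatively graded with one-dimensional degree-zero part (Gan--Ginzburg; cf.~\cref{sec:SlodowySlices}).
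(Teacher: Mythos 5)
Your proposal takes essentially the same route as the paper, which, for this theorem, records no explicit proof: the preceding lemma handles the covering case, and the theorem is then stated as "Putting these facts together yields the following." You have filled in the three things the paper leaves unsaid --- the composition along a saturated covering chain, the need to take $Q$ to be an intersection of the groups arising at each step, and the contraction argument via the non-negativity of the Kazhdan grading on $\gr U(\fg,e_2) = \Cpoly{\vS_{\chi_2}}$ --- and these are the right things to fill in.

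There is, however, a genuine gap in your justification of the second sentence of the theorem. The claim "$\varphi$ is symplectic, hence a Poisson morphism" is false: a symplectic embedding $\iota\colon(M,\omega_M)\hookrightarrow(N,\omega_N)$ satisfying $\iota^*\omega_N=\omega_M$ is \emph{not} a Poisson morphism unless it is open. (The standard example: the inclusion $\RR^2\hookrightarrow\RR^4$ of coordinate symplectic planes pulls back $\{q_2,p_2\}=1$ to $0$.) What a symplectic embedding does give you is that the image is a symplectic submanifold, so the target's Poisson bivector projects along the symplectic orthogonal complement to recover the source's Poisson bivector --- a Dirac-type reduction, not the pullback property that "Poisson morphism" usually denotes. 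For the same reason your parenthetical that the bracket on $\vS_{\chi_i}$ is "the Lie--Poisson bracket of $\fg^*$ restricted to the slice" is a misreading of \cref{sec:SlodowySlices}: $\vS_{\chi_i}$ is not a Poisson subvariety of $\fg^*$, and the bracket it inherits is again a reduction bracket, not a restriction. So the step from "$\varphi$ is a $G$-equivariant symplectic embedding" to "the induced map $\vS_{\chi_2}\hookrightarrow\vS_{\chi_1}$ is Poisson" needs a different argument (or a weaker reading of the theorem's phrase "embedding of Poisson manifolds"). Note that the paper itself asserts this without proof, and the downstream use in \cref{thm:DecompLem} rests on the $\widetilde{G}$-equivariant symplectic data and the tangent-space splitting rather than on this Poisson-morphism property, so the omission does not propagate; but as written your justification of that sentence is not correct.

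Two smaller points: you do not need the dominance order to be graded, only that any $\mu<\lambda$ is joined by a saturated chain, which holds in any finite poset; and the bookkeeping you flag about re-aligning pyramids between steps is real and is not addressed in the paper either, so your acknowledgment of it is appropriate rather than a defect.
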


\subsection{The decomposition lemma}
\label{sec:DecompLem}

Consider the point $x = (1,\chi_2)$; the embedding of \cref{thm:SlodowyEmbedding}
induces an inclusion of symplectic vector spaces
$\varphi_* \colon T_x \smash{\vS~_{\chi_2}} \hookrightarrow T_x \smash{\vS~_{\chi_1}}$.
This, in turn, induces an inclusion
$\pp{\fg / [\fg, f_2]}^* \hookrightarrow \pp{\fg / [\fg, f_1]}^*$,
and we let $W$ denote its image.
Finally, we define the subspace $V \subseteq \liez[\fg]{e_1}$ as follows:
\begin{equation}
V \coloneqq W^\bot \cap \liez[\fg]{e_1} = \set{ \xi \in \liez[\fg]{e_1}
  \st \alpha(\xi) = 0 \text{ for all } \alpha \in W}.
\label{eq:Vdefn}
\end{equation}
This vector space has a symplectic form expressed by $\omega(x,y) = \chi_2([x,y])$.
Note that if $e_1 = 0$, then $V = [\fg, f_2]$.

The symplectic form on $T_x \vS~_{\chi_1}$ is given by the expression
\begin{equation*}
\omega(\xi + \alpha, \eta + \beta) = \chi_2([\xi,\eta]) - \dpair{\xi,\beta} + \dpair{\eta,\alpha},
\end{equation*}
so the symplectic complement $\pp[\big]{\varphi_* T_x \smash{\vS~_{\chi_2}}}^{\bot_\omega}$ is
$\set{(\xi, \ad_\xi^* \chi_2) \st \xi \in V}$.
Projecting onto the first component identifies this with $V$, but we'll instead
identify the it with $V^*$; they are isomorphic as symplectic $Q$\nb-modules.
This gives a $(\CC* \times Q)$\nb-equivariant symplectic isomorphism
$\smash{\psi \colon T_x \vS~_{\chi_2} \oplus V^* \to T_x \vS~_{\chi_1}}$,
given by $\psi(v,w) = \varphi_*(v) + w$.

The standard considerations of Fedosov quantisation
(cf.~\cite[Section~2.2]{Los:QuantSymplWalg})
yield $(G \times Q)$\nb-invariant, homogeneous, degree~2 star products on each
of $\power{\Cpoly{\smash{\vS~_{\chi_i}}}}{\hbar}$, the Moyal-Weyl star product on
$\power{\Cpoly{V^*}}{\hbar}$, and finally on the product
$\power{\Cpoly{\smash{\vS~_{\chi_2}} \times V^*}}{\hbar}$.
Since the star products are differential, they induce star products on the
completions
$\power{\Cpoly{\smash{\vS~_{\chi_1}}}^\wedge_{Gx}}{\hbar}$ and
$\power{\Cpoly{\smash{\vS~_{\chi_2}} \times V^*}^\wedge_{Gx}}{\hbar}$.
Applying the argument of \cite[Theorem~3.3.1]{Los:QuantSymplWalg} yields a
$\widetilde{G}$\nb-equivariant $\Cpoly{\hbar}$\nb-algebra isomorphism
\begin{equation}
\Phi_\hbar \colon \power{\Cpoly{\vS~_{\chi_1}}^\wedge_{Gx}}{\hbar} \to
       \power{\Cpoly{\vS~_{\chi_2} \times V^*}^\wedge_{Gx}}{\hbar}.
\label{eq:EqvtSlodStarIso}
\end{equation}
Taking $G$\nb-invariants produces the following analogue of
\cite[Proposition~2.1]{Los:CatOWAlg}.

\begin{thm}
\label{thm:DecompLem}
There is a $(\CC* \times Q)$\nb-equivariant $\Cpoly{\hbar}$\nb-algebra
isomorphism
\begin{equation*}
\Phi_\hbar \colon \power{\Cpoly{\vS_{\chi_1}}^\wedge_{\chi_2}}{\hbar} \to
       \power{\Cpoly{\vS_{\chi_2}}^\wedge_{\chi_2}}{\hbar}
       \widehat{\otimes}_{\Cpower{\hbar}} \power{\Cpoly{V^*}^\wedge_0}{\hbar}
\end{equation*}
satisfying:
\begin{enumerate}
\item $\Phi_\hbar(\sum_{i=0}^\infty f_i \hbar^{2i})$ contains only even powers of $\hbar$.
\item The cotangent map
     $d_0(\Phi_\hbar)^* \colon \liez[\fg]{e_2} \otimes V \to \liez[\fg]{e_1}$
     co-incides with $\psi$.
\item For $\iota_1,\iota_2$ the respective embeddings of $\mathfrak{q}$ into the
      domain and codomain of $\Phi_\hbar$, then $\Phi_\hbar \circ \iota_1 = \iota_2$.
\end{enumerate}
\end{thm}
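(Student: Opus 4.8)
The plan is to obtain \cref{thm:DecompLem} directly from the $\widetilde{G}$-equivariant isomorphism \eqref{eq:EqvtSlodStarIso} by passing to $G$-invariants, exactly in the way \cite[Proposition~2.1]{Los:CatOWAlg} is extracted from \cite[Theorem~3.3.1]{Los:QuantSymplWalg}. All of the genuine content — the existence of an equivariant, homogeneous, even star-product isomorphism of the completed algebras — is already packaged in \eqref{eq:EqvtSlodStarIso}; what remains is to identify the $G$-invariant parts of the two algebras and to check that the three listed properties descend.

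First I would record the elementary identifications. Since $\vS~_{\chi_i} = G \times \vS_{\chi_i}$ with $G$ acting freely by left translation on the first factor alone, one has $\Cpoly{\vS~_{\chi_i}}^{G} = \Cpoly{\vS_{\chi_i}}$, and the $Gx$-adic completion restricts on $G$-invariants to the completion of $\Cpoly{\vS_{\chi_i}}$ at the image of $x = (1,\chi_2)$ in $\vS_{\chi_i}$: for $i = 2$ this is the base point, and for $i = 1$ it is the point $\varphi(1,\chi_2)$, which we abbreviate to $\chi_2$ following Loseu. Likewise $\vS~_{\chi_2} \times V^* = G \times (\vS_{\chi_2} \times V^*)$ with $G$ acting only on the first factor, so taking $G$-invariants of the right-hand side of \eqref{eq:EqvtSlodStarIso} and completing $\hbar$-adically yields the completed tensor product $\power{\Cpoly{\vS_{\chi_2}}^\wedge_{\chi_2}}{\hbar} \widehat{\otimes}_{\Cpower{\hbar}} \power{\Cpoly{V^*}^\wedge_0}{\hbar}$; here one uses that $V^*$ is a genuine linear complement to $\varphi_{*} T_{x} \vS~_{\chi_2}$ inside $T_{x} \vS~_{\chi_1}$, so that the completion of the product at the orbit $Gx \times \{0\}$ splits as the completed tensor product of the two separate completions. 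Because $\Phi_\hbar$ of \eqref{eq:EqvtSlodStarIso} is $\widetilde{G} = (G \times \CC* \times Q)$-equivariant, it restricts to an algebra isomorphism of the $G$-invariant subalgebras, and this restriction is visibly $(\CC* \times Q)$-equivariant; this is the map asserted in the statement.

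It then remains to transport properties (1)--(3). For (1), as in Loseu's construction one works with a symmetric Fedosov connection, so that the star products on both sides — and hence \eqref{eq:EqvtSlodStarIso} — commute with the parity automorphism $\hbar \mapsto -\hbar$; an element of the domain that is a power series in $\hbar^2$ is fixed by this automorphism, so its image is too, i.e.\ it contains only even powers of $\hbar$. For (2), the isomorphism \eqref{eq:EqvtSlodStarIso} is manufactured so that its classical part — the cotangent map at the base point obtained after reducing modulo $\hbar$ — is the symplectomorphism $\psi$ of \cref{sec:DecompLem}; taking $G$-invariants and using the $\fsl_2$-decompositions $\fg = [\fg,f_i] \oplus \liez[\fg]{e_i}$ to identify the relevant cotangent spaces with $\liez[\fg]{e_1}$ and $\liez[\fg]{e_2} \oplus V$ yields exactly $d_0(\Phi_\hbar)^* = \psi$. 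For (3), $Q$ is reductive, so the residual Hamiltonian $Q$-action on each of the two algebras admits a \emph{canonical} $Q$-equivariant quantum comoment map — the one produced by the Fedosov construction — and any $Q$-equivariant algebra isomorphism must intertwine these; hence $\Phi_\hbar \circ \iota_1 = \iota_2$.

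The step I expect to need the most care is verifying that Loseu's machinery genuinely applies in this generality: \cite[Theorem~3.3.1]{Los:QuantSymplWalg} and \cite[Proposition~2.1]{Los:CatOWAlg} are written with $e_1 = 0$ and $\vS_{\chi_1} = \fg^*$, whereas here $\vS~_{\chi_1}$ is the smooth affine symplectic variety containing the image of the $(G \times Q)$-equivariant symplectic embedding $\varphi$ supplied by \cref{thm:SlodowyEmbedding}, with $x = (1,\chi_2)$ playing the role of a point attached to a larger nilpotent in the closure. One checks that the arguments use only that $\vS~_{\chi_1}$ is smooth affine symplectic, that $\varphi$ is an equivariant symplectic embedding with the normal data computed in \cref{sec:DecompLem}, and that $Q$ is reductive — all of which is in hand — and that the vector space $V$ and the group $Q$ defined in \cref{sec:DecompLem} are precisely the ones for which the Fedosov argument yields \eqref{eq:EqvtSlodStarIso}, which is a matter of unwinding definitions rather than a substantive difficulty. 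Beyond that, the one genuinely technical (though routine) point is the interchange of $G$-invariants with the $Gx$-adic completion and the resulting completed tensor-product decomposition.
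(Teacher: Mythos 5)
Your proposal is correct and takes essentially the same route as the paper, which states \cref{thm:DecompLem} as the result of "taking $G$\nb-invariants" in \eqref{eq:EqvtSlodStarIso}, modelled on the passage from \cite[Theorem~3.3.1]{Los:QuantSymplWalg} to \cite[Proposition~2.1]{Los:CatOWAlg}; the paper gives no separate proof environment, so your write-up in fact supplies more detail than the source does. In particular you correctly identify the $G$\nb-invariant subalgebras via the product structure $\vS~_{\chi_i} = G \times \vS_{\chi_i}$, the descent of properties (1)--(3) (parity, classical limit, comoment map), and the real point requiring care — that Loseu's Fedosov argument is being applied with $\vS~_{\chi_1}$ in place of $T^*G$.
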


\subsection{Loseu's machinery}
\label{sec:LoseuPaper}

In order to continue, we recall the machinery Loseu has developed for proving
\cref{thm:CatOWalgEquiv}.
Let the following be given.
\begin{itemize}
\item
    $\fv = \bigoplus_{i \in \ZZ} \fv(i)$ is a graded finite-dimensional vector
    space on which a torus $T$ acting by preserving the grading.
\item
    $A \coloneqq \Sym(\fv)$, with the induced grading $A = \bigoplus_{i \in \ZZ} A_i$ and
    induced $T$-action.
\item
    $(\cA, \circ)$ is an algebra with the same underlying vector space as $A$,
    where the algebra structure comes from a $T$\nb-invariant deformation
    quantisation.
\item
    $\omega_1$ is a symplectic form on $\fv(1)$, where $\omega_1(u,v)$ is the constant term
    of the commutator in $\cA$, and $\fy$ is a lagrangian subspace of $\fv(1)$.
\item
    $\fm \coloneqq \fy \oplus \bigoplus_{i\le0} \fv(i)$.
\item
    $v_1, v_2, \dotsc, v_n$ is a homogeneous basis of $\fv$ such that $v_1, v_2,
    \dotsc, v_m$ form a basis of $\fm$.
    Further, let $d_i$ be the degree of $v_i$ and assume that
    $d_1, d_2, \dotsc, d_m$ are increasing and that all $v_i$ are
    $T$\nb-semi-invariant.
\item
    $A^\heartsuit$ is the subalgebra of $\Cpower{\fv^*}$ consisting of elements
    of the form $\sum_{i\le c} f_i$ for some $c$, where $f_i$ is a homogeneous power
    series of degree $i$.
\item
    $\cA^\heartsuit$ is the algebra $A^\heartsuit$ with multiplication as in
    $\cA$.
    Any element of $\cA^\heartsuit$ can be written as an infinite linear
    combination of monomials $\smash{v_{i_1} \circ \dotsb \circ v_{i_\ell}}$,
    where $i_1 \ge \dotsb \ge i_\ell$ and $\smash{\sum_{j=1}^\ell d_{i_j}} \le c$ for
    some $c$.
    Hence there is a filtration $F_c \cA^\heartsuit$.
\item
    $\theta$ is a co-character of $T$, and $\fv_{\ge0}$ and $\fv_{>0}$ are,
    respectively, the sums of the positive and strictly-positive
    $\ad \theta$-eigenspaces of $\fv$.
    We shall further require that $\fv_{>0} \subseteq \fm \subseteq \fv_{\ge0}$.
\item
    $\cA_{\ge0}, \cA_{>0}, \cA^\heartsuit_{\ge0}, \cA^\heartsuit_{>0}$ are all
    defined analogously.
\item
    $\cA^\wedge \coloneqq \lim \cA / \cA\fm^k$.
    Note that there is an injective algebra homomorphism
    $\cA^\heartsuit \to \cA^\wedge$.
\end{itemize}

\begin{prop}{\normalfont \cite[Proposition~5.1]{Los:CatOWAlg}}
\label{prop:los5.1}
Let $(\cA,\circ)$ and $(\cA',\circ')$ be two different algebras coming from $A$
and $\fv$ as above.
Suppose there is a subspace $\fy \subseteq \fv(1)$ which is Lagrangian for both
symplectic forms, and every element of $A$ can be written as a finite sum of
monomials in both $\cA$ and $\cA'$.
Then any homogeneous $T$\nb-equivariant isomorphism
$\Phi \colon \cA^\heartsuit \to \cA'^\heartsuit$ satisfying
$\Phi(v_i) - v_i \in F_{d_i-2} \cA + (F_{d_i} \cA \cap \fv^2 A)$
extends uniquely to a topological algebra isomorphism
$\Phi \colon \cA^\wedge \to \cA'^\wedge$ with
$\Phi(\cA^\wedge \fm) = \cA'^\wedge \fm$.
\end{prop}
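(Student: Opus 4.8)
The plan is to prove that $\Phi$ is continuous for the $\fm$-adic topologies and then extend it by completion. Everything happens on the single filtered vector space $A^\heartsuit$: the hypothesis that each element of $A$ is a finite sum of monomials in \emph{both} products makes the $F$-filtration, its associated graded $\gr^F = A = \Sym(\fv)$, and the left submodules $\cA^\heartsuit\fm^k$ coincide for the two algebra structures. I would first record the routine facts that $\cA^\heartsuit/\cA^\heartsuit\fm^k \cong \cA/\cA\fm^k$ for every $k$, so that $\cA^\wedge$ is the $\fm$-adic completion of the ring $\cA^\heartsuit$, and that $\cA^\heartsuit$ is complete and separated for $F$. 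This reduces the assertion to the two inclusions $\Phi(\cA^\heartsuit\fm^k)\subseteq(\cA')^\heartsuit\fm^k$ and $\Phi^{-1}((\cA')^\heartsuit\fm^k)\subseteq\cA^\heartsuit\fm^k$ for all $k$: granting them, $\Phi$ extends uniquely (by density) to a continuous algebra map $\cA^\wedge\to(\cA')^\wedge$, $\Phi^{-1}$ extends likewise, the two extensions compose to the identity on the dense subring $\cA^\heartsuit$ and so are mutually inverse, and $\Phi(\cA^\wedge\fm)=(\cA')^\wedge\fm$ is immediate.

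The next step is to understand $\gr\Phi$. The hypothesis gives $\Phi(v_i)-v_i\in F_{d_i}\cA'$, so, since $\Phi$ is an algebra map and the $v_i$ topologically generate $\cA^\heartsuit$, the map $\Phi$ preserves the $F$-filtration; its associated graded is a degree-preserving algebra endomorphism $\gr\Phi$ of $A^\heartsuit$ with $\gr\Phi(v_i)=v_i+q_i$, where $q_i$ is the degree-$d_i$ component of $\Phi(v_i)-v_i$ and therefore lies in $(\fv^2A)_{d_i}$ (the $F_{d_i-2}\cA$-summand contributes nothing in degree $d_i$; homogeneity of $\Phi$ kills it outright). Thus $\gr\Phi$ is congruent to the identity modulo $\fv^2A$, so it is an automorphism of $A^\heartsuit$ whose inverse $\gr(\Phi^{-1})=(\gr\Phi)^{-1}$ has the same property; in particular $\Phi$ and $\Phi^{-1}$ strictly preserve $F$. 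The homogeneity and $T$-equivariance hypotheses are also what guarantee equivariance of the resulting isomorphism in the applications.

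The heart of the matter is to show that $\gr\Phi$ preserves the descending filtration $A^\heartsuit(\gr\fm)^k$, which is exactly $\gr^F$ of the $\fm$-adic filtration $\{\cA^\heartsuit\fm^k\}$ on $\cA^\heartsuit$ (degree by degree, since $\gr^F\cA^\heartsuit=A^\heartsuit$ is a domain). Here I would exploit that $\fm=\fy\oplus\bigoplus_{i\le0}\fv(i)$ absorbs \emph{all} non-positive graded degrees together with the Lagrangian $\fy\subseteq\fv(1)$: any monomial in $\fv$ of graded degree $\le 1$ and polynomial degree $\ge 2$ must contain a generator of non-positive degree, hence one lying in $\gr\fm$. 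Since $d_i\le 1$ for $i\le m$ (the $v_i$ with $i\le m$ being a basis of $\fm$) and $q_i\in(\fv^2A)_{d_i}$, this forces $q_i\in A\cdot\gr\fm$, so $\gr\Phi(v_i)\in A\cdot\gr\fm$ for all $i\le m$ and hence $\gr\Phi(A^\heartsuit(\gr\fm)^k)\subseteq A^\heartsuit(\gr\fm)^k$. I then lift this to $\Phi$: for $x\in\cA^\heartsuit\fm^k$ every $F$-symbol of $x$ lies in $A^\heartsuit(\gr\fm)^k$, hence so does every $F$-symbol of $\Phi(x)$, and peeling off these symbols one graded degree at a time — each step producing a genuine element of $(\cA')^\heartsuit\fm^k$ — the corrections converge because $(\cA')^\heartsuit$ is $F$-complete with $(\cA')^\heartsuit\fm^k$ $F$-closed, so $\Phi(x)\in(\cA')^\heartsuit\fm^k$. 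The identical argument for $\Phi^{-1}$, legitimate because $\gr(\Phi^{-1})$ is again the identity modulo $\fv^2A$, supplies the reverse inclusion, and the first step then concludes.

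I expect the main obstacle to be the bookkeeping relating the two filtrations that underlies the third step: one must check that low $F$-degree forces deep $\fm$-adic order — concretely $F_{-N}\cA^\heartsuit\subseteq\cA^\heartsuit\fm^{\lceil N/r\rceil}$, where $-r$ is the most negative graded degree occurring in $\fv$ — both to license the use of $F$-completeness in upgrading the graded inclusion to the filtered one and to ensure that $\cA^\wedge$ really is the $\fm$-adic completion of $\cA^\heartsuit$ as used in the first step. This again rests on the structural input that every generator of $\fv$ outside $\fm$ has strictly positive degree, so a monomial of very negative degree must carry many $\fm$-factors; one also has to absorb the commutator corrections incurred when reordering $\circ$- or $\circ'$-monomials to collect their $\fm$-factors, each of which lowers $F$-degree and is swallowed by the same convergence. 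With that in place the rest is formal.
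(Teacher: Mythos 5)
The paper does not prove this proposition --- it is recalled verbatim from Loseu (\cite[Proposition~5.1]{Los:CatOWAlg}), so there is no in-text argument to compare against, and I can only assess your proposal on its own terms. The broad strategy you adopt --- reduce to the two $\fm$-adic continuity inclusions, pass to $\gr^F$ and show that $\gr\Phi$ preserves $A^\heartsuit(\gr\fm)^k$, then lift using $F$-completeness --- is the natural one, and the decisive degree count is correct: for $i\le m$ one has $d_i\le 1$, and any monomial of graded degree at most $1$ with at least two factors must contain a generator of non-positive degree, which lies in $\bigoplus_{j\le 0}\fv(j)\subseteq\fm$; this forces $q_i\in A\cdot\gr\fm$ and is exactly why $\fm$ is designed to swallow the whole of $\bigoplus_{j\le 0}\fv(j)$ together with a Lagrangian in degree~$1$.

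Two places deserve tightening. The opening claim that ``the left submodules $\cA^\heartsuit\fm^k$ coincide for the two algebra structures'' conflates two distinct statements: what actually holds (and is what the ``finite sums of monomials'' hypothesis buys you, via a PBW-type argument) is that both submodules have $F$-associated graded equal to $A^\heartsuit(\gr\fm)^k$, not that they coincide as subsets of $A^\heartsuit$; showing that $\Phi$ carries one into the other is precisely the non-trivial content, and it is your third paragraph, not this remark, that does the work. The parenthetical ``homogeneity of $\Phi$ kills [the $F_{d_i-2}$-summand] outright'' is also doing no work and is apt to mislead: your first reason (that a summand in $F_{d_i-2}\cA$ cannot contribute in degree $d_i$) already identifies $q_i$ as the top part of the $\fv^2 A$-summand, and if homogeneity genuinely forced $\Phi(v_i)-v_i$ to be concentrated in degree $d_i$ there would have been no reason to include the $F_{d_i-2}\cA$ term in the hypothesis at all. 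Finally, the lifting step deserves more than the phrase ``peeling off symbols'': passing from ``all graded components of $\Phi(x)$ lie in $A^\heartsuit(\gr\fm)^k$'' to ``$\Phi(x)\in\cA'^\heartsuit\fm^k$'' needs the PBW normal form and the $F$-closedness of $\cA'^\heartsuit\fm^k$ made explicit, together with the reordering estimates you rightly flag at the end; these are precisely the points a full write-up following Loseu would have to nail down.
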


\begin{cor}{\normalfont \cite[Corollary~5.2]{Los:CatOWAlg}}
\label{cor:los5.2}
The isomorphism $\Phi \colon \cA^\wedge \to \cA'^\wedge$ induces an equivalence
of categories $\Phi_* \colon \Whit(\cA,\fm) \to \Whit(\cA',\fm)$, where
$\Whit(\cA,\fm)$ is the category of $\cA^\wedge$\nb-modules which are
annihilated by some $\cA^\wedge \fm^k$.
This equivalence preserves the subcategories on which $\ft$ acts semisimply, and
commutes with the functor of taking $\fm$\nb-invariants;\
i.e.~$\Phi_*(M^\fm) = \Phi_*(M)^\fm$.
%
%Furthermore, there is an equivalence
%$\Phi^0_* \colon \Whit(\cA^0, \fm_0) \to \Whit(\cA'^0, \fm_0)$,
%where $\cA^0$ is defined as $\cA_{\ge0} / (\cA_{\ge0} \cap \cA \cA_{>0})$, and
%$\cA'^0$ is defined analogously.
\end{cor}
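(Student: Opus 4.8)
The plan is to realise $\Phi_*$ as transport of module structure along $\Phi$ and then to derive the two compatibilities formally from the properties of $\Phi$ supplied by \cref{prop:los5.1}. Explicitly, for an $\cA^\wedge$-module $M$ let $\Phi_*(M)$ be $M$ as a vector space, with $\cA'^\wedge$ acting through $\Phi^{-1}$; since $\Phi$ is an isomorphism of topological algebras this defines an equivalence $\Mod{\cA^\wedge}\to\Mod{\cA'^\wedge}$ with quasi-inverse $(\Phi^{-1})_*$. It remains to see that it restricts to an equivalence $\Whit(\cA,\fm)\to\Whit(\cA',\fm)$ and that it has the two stated properties.

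For the first point it suffices to check that $\Phi(\cA^\wedge\fm^k)=\cA'^\wedge\fm^k$ for every $k\ge 1$; granting this, an $\cA^\wedge$-module is killed by $\cA^\wedge\fm^k$ exactly when its image under $\Phi_*$ is killed by $\cA'^\wedge\fm^k$, so $\Phi_*$ carries $\Whit(\cA,\fm)$ isomorphically onto $\Whit(\cA',\fm)$. The case $k=1$ is part of the conclusion of \cref{prop:los5.1}. For the inductive step I would use that $\fm=\fy\oplus\bigoplus_{i\le0}\fv(i)$ is the same subspace in $\cA$ and $\cA'$ and is a subalgebra of each, so that $\cA^\wedge\fm^{k+1}=(\cA^\wedge\fm^k)\,\fm$; applying the continuous algebra homomorphism $\Phi$ together with the $k=1$ case then yields the claim.

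The compatibility with $\ft$-semisimplicity is immediate from the $T$-equivariance of $\Phi$ asserted in \cref{prop:los5.1}: the $\ft$-action on modules is induced from the common torus via the embeddings $U(\ft)\hookrightarrow\cA^\wedge$ and $U(\ft)\hookrightarrow\cA'^\wedge$, which $\Phi$ identifies, so $M$ and $\Phi_*(M)$ carry the same $\ft$-module structure and one is semisimple precisely when the other is. For the invariants functor, note that $M^\fm=\{\,v\in M:\fm v=0\,\}=\{\,v\in M:(\cA^\wedge\fm)v=0\,\}$, the second equality because $\fm v=0$ already forces $(\cA^\wedge\fm)v=0$; the analogous description holds for $\Phi_*(M)$ with $\cA'^\wedge\fm$ in place of $\cA^\wedge\fm$. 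Since $\Phi$ identifies the two left ideals, $\Phi_*(M^\fm)$ and $\Phi_*(M)^\fm$ are the same subspace of the common underlying space, which is the asserted equality.

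The only step requiring genuine care, and hence the main obstacle, is the identity $\Phi(\cA^\wedge\fm^k)=\cA'^\wedge\fm^k$ for all $k$: one must check that passing from the open left ideal $\cA^\wedge\fm$ to its higher powers is compatible with $\Phi$, which relies both on $\fm$ being a common subalgebra of the two quantisations and on $\Phi$ being a filtered topological algebra map as in \cref{prop:los5.1}, not merely a topological one. Everything else is formal bookkeeping.
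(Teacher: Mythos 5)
The paper cites this corollary from Loseu without reproducing a proof, so there is no in-paper argument to compare against; I am evaluating your proposal on its own terms. Your overall structure — transport of module structure along $\Phi$, reading off $\ft$-semisimplicity from $T$-equivariance, and deducing the invariants compatibility from $\Phi(\cA^\wedge\fm)=\cA'^\wedge\fm$ — is sound, and the last two of these are handled correctly.

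The gap is in the inductive step asserting $\Phi(\cA^\wedge\fm^k)=\cA'^\wedge\fm^k$ for all $k$. You correctly have $\cA^\wedge\fm^{k+1}=(\cA^\wedge\fm^k)\fm$, and applying $\Phi$ gives $\Phi(\cA^\wedge\fm^{k+1}) = \bigl(\cA'^\wedge\fm^k\bigr)\,\Phi(\fm)$. But \cref{prop:los5.1} only tells you $\Phi(\cA^\wedge\fm)=\cA'^\wedge\fm$, hence $\Phi(\fm)\subseteq\cA'^\wedge\fm$, not $\Phi(\fm)=\fm$. So for $y\in\fm$ you must write $\Phi(y)=\sum_j b_j n_j$ with $b_j\in\cA'^\wedge$, $n_j\in\fm$, and then a typical element of $\bigl(\cA'^\wedge\fm^k\bigr)\Phi(\fm)$ looks like $a\,m_1\cdots m_k\,b_j\,n_j$; to land in $\cA'^\wedge\fm^{k+1}$ you must move $b_j$ leftward past $m_1,\dotsc,m_k$, which produces commutator terms $[m_i,b_j]$ that you do not control without a further argument. ``$\fm$ is a common subalgebra of the two quantisations'' does not by itself give this.

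In fact the equality $\Phi(\cA^\wedge\fm^k)=\cA'^\wedge\fm^k$ for all $k$ is stronger than what the statement requires. The category $\Whit(\cA,\fm)$ is defined by the condition that \emph{some} power $\cA^\wedge\fm^k$ annihilates $M$, so it suffices that $\Phi$ and $\Phi^{-1}$ interleave the two filtrations: for each $k$ there exist $\ell,\ell'$ with $\cA'^\wedge\fm^{\ell}\subseteq\Phi(\cA^\wedge\fm^k)$ and $\Phi(\cA^\wedge\fm^{\ell'})\subseteq\cA'^\wedge\fm^k$. This is precisely the statement that $\Phi$ is a homeomorphism for the $\fm$-adic topologies in which the completions $\cA^\wedge$ and $\cA'^\wedge$ are taken — which is already part of the conclusion of \cref{prop:los5.1} (``topological algebra isomorphism''). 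You should appeal to that directly rather than attempting the exact equality by induction; the invariants statement still only needs the $k=1$ case, which \cref{prop:los5.1} supplies explicitly.
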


\begin{note}
$\Whit(\cA,\fm)$ can naturally be viewed as the category of $\cA$\nb-modules on
which~$\fm$ acts by locally nilpotent endomorphisms.
This justifies the choice of notation in \cref{tab:WhittakerCats}.
\end{note}

With the constructions as before, we seek to make a set of choices which satisfy
the hypotheses of \cref{prop:los5.1}.
First, we'll fix a maximal torus $T \subseteq Q$,
and pick an arbitrary cocharacter $\theta$, viewed as an element of $\ft$.
Denote by $\fp$ the parabolic subalgebra of $\fg$ consisting of the positive
eigenspaces of $\ad \theta$.
The zero eigenspace is a Levi subalgebra $\fl \subseteq \fp$, which contains each of
$e_i$, $h_i$ and $f_i$ for $i=1,2$.
The good grading $\Gamma$ of $\fg$ induces a good grading of $\fl$, and so one can
pick a Premet subalgebra $\smash{\ul{\fm}} \subseteq \fl$ as usual, with
corresponding shift $\smash{\ul{\fm}[\chi]}$.
Let $\tfm \coloneqq \smash{\ul{\fm}} \oplus \fg_{>0}$, where $\fg_{>0}$ consists
of the strictly positive eigenspaces of the action of $\ad \theta$;
the corresponding shift is $\tfm_\chi \coloneqq \smash{\ul{\fm}[\chi] \oplus \fg_{>0}}$.

\begin{enumerate}
\item Define $\fv \coloneqq \set{\xi - \chi_2(\xi) \st \xi \in \liez{e_1}}$.
     Note that $\fv \simeq \liez[\fg]{e_2} \oplus V$, as shown in \cref{sec:DecompLem}.

\item Choosing $m > 2 + 2d$, where $d$ is the maximum eigenvalue of $\ad h'_2$ on
      $\fg$, define the grading on $\fv$ to be given by
      \begin{equation*}
      \fv(i) = \set{\xi \in \fv \st (\ad h'_2-m \ad \theta)\xi = (i-2)\xi}.
      \end{equation*}

\item Set $\fm$ to be $\smash{\tfm_\chi \cap \fv}$, which satisfies
      $\fv_{>0} \subseteq \fm \subseteq \fv_{\ge0}$ by our choice of $m$.

\item Define $\cA \coloneqq U(\fg,e_1)$ and $\cA' = \Weyl{V} \otimes U(\fg,e_2)$.
\end{enumerate}

These choices satisfy the conditions beginning this \namecref{sec:LoseuPaper}.
It remains to prove the following \namecref{lem:MyLos5.1}.

\begin{lem}
\label{lem:MyLos5.1}
There is an isomorphism
$\Phi \colon U(\fg,e_1)^\heartsuit \to \pp{\Weyl{V} \otimes U(\fg,e_2)}^\heartsuit$
which satisfies the hypotheses of \cref{prop:los5.1}.
\end{lem}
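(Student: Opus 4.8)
The plan is to extract $\Phi$ from the $\hbar$-adic isomorphism $\Phi_\hbar$ of \cref{thm:DecompLem} by a dehomogenisation, in exactly the way Loseu passes from his \cite[Proposition~2.1]{Los:CatOWAlg} to the hypotheses of \cite[Proposition~5.1]{Los:CatOWAlg}. The Fedosov star products appearing in \cref{sec:DecompLem} --- on $\power{\Cpoly{\vS~_{\chi_i}}}{\hbar}$, on $\power{\Cpoly{V^*}}{\hbar}$, and on their completions --- are homogeneous of degree~$2$, so the deformation parameter $\hbar$ carries graded degree~$2$ and all of these algebras are $\ZZ$-graded. There is a standard functor taking a $\CC*$-equivariant isomorphism of such completed $\hbar$-algebras, at a $\CC*$-fixed point, to an isomorphism of the corresponding $\heartsuit$-algebras. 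Applying it to $\Phi_\hbar$: the left-hand algebra $\power{\Cpoly{\vS_{\chi_1}}^\wedge_{\chi_2}}{\hbar}$ dehomogenises to $U(\fg,e_1)^\heartsuit$, using that $\chi_2$ is a fixed point of the $\CC*$-action determined by $h'_2$ (\cref{prop:h2Good}, \cref{thm:SlodowyEmbedding}) and that $\fv = \set{\xi - \chi_2(\xi) \st \xi \in \liez[\fg]{e_1}}$ is the cotangent space to $\vS_{\chi_1}$ at $\chi_2$; while $\power{\Cpoly{\vS_{\chi_2}}^\wedge_{\chi_2}}{\hbar} \widehat{\otimes} \power{\Cpoly{V^*}^\wedge_0}{\hbar}$ dehomogenises to $\pp{\Weyl{V} \otimes U(\fg,e_2)}^\heartsuit$, because $\Weyl{V}$ is the dehomogenisation of the Moyal--Weyl product on $\power{\Cpoly{V^*}}{\hbar}$, the completion there is at the fixed point $0 \in V^*$, and $\fv \simeq \liez[\fg]{e_2} \oplus V$ by \cref{sec:DecompLem}. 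This produces the candidate isomorphism $\Phi$.

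It then remains to check the hypotheses of \cref{prop:los5.1}. Homogeneity of $\Phi$ and $T$-equivariance (recall $T \subseteq Q$) are inherited directly from the $\CC*$- and $Q$-equivariance of $\Phi_\hbar$, and property~(3) of \cref{thm:DecompLem} additionally shows that $\Phi$ intertwines the two embeddings of $\mathfrak{q}$. The common Lagrangian $\fy \subseteq \fv(1)$ works for the commutator forms of both $U(\fg,e_1)$ and $\Weyl{V} \otimes U(\fg,e_2)$ because the identification $\psi \colon \fv \simeq \liez[\fg]{e_2} \oplus V$ of \cref{sec:DecompLem} is symplectic, so the two symplectic structures on $\fv(1)$ agree. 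The remaining ``PBW'' requirement, that every element of $A = \Sym(\fv)$ be a finite sum of monomials in each of the two algebras, is among the conditions already arranged at the end of \cref{sec:LoseuPaper}.

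The substantive point is the leading-term estimate $\Phi(v_i) - v_i \in F_{d_i-2}\cA + (F_{d_i}\cA \cap \fv^2 A)$, and here property~(2) of \cref{thm:DecompLem} is decisive: it identifies the linear part of $\Phi_\hbar$ with the map induced by $\psi$, and $\psi$ is precisely the identification of $\fv$ used to regard each $v_i$ as an element of both $\heartsuit$-algebras. Writing $\Phi_\hbar(v_i) = \sum_{k\ge0} g_k\hbar^k$ with $g_k$ homogeneous of graded degree $d_i - 2k$, homogeneity together with property~(2) forces $g_0 - v_i$ into $F_{d_i}\cA \cap \fv^2 A$, while every $g_k$ with $k \ge 1$ has graded degree at most $d_i - 2$ and so lies in $F_{d_i-2}\cA$; specialising $\hbar = 1$ and summing (property~(1) making this specialisation unambiguous) gives the asserted inclusion. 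I expect the only genuine work to be the dehomogenisation bookkeeping of the first and third paragraphs --- matching the $\hbar$-adic completion at $\chi_2$ with the $\heartsuit$-completion, handling the completed tensor product on the right, and tracking graded degrees through $\hbar = 1$ --- which parallels Loseu's arguments in \cite{Los:QuantSymplWalg, Los:CatOWAlg}; the geometric and deformation-theoretic content has already been isolated in \cref{thm:DecompLem}.
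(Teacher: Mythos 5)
Your proposal follows essentially the same route as the paper's proof: the paper identifies $U(\fg,e_1)^\heartsuit$ and $\pp{\Weyl{V}\otimes U(\fg,e_2)}^\heartsuit$ with the quotients by $\hbar-1$ of the $\CC^*$-finite parts of the two completed $\hbar$-algebras, and then invokes $\Phi_\hbar$ from \cref{thm:DecompLem} together with a citation to \cite[Corollary~3.3.2]{Los:QuantSymplWalg} to settle the hypotheses of \cref{prop:los5.1}. You have simply unpacked the content of that citation --- in particular the leading-term estimate via property~(2) of \cref{thm:DecompLem} and the homogeneity bookkeeping --- which the paper leaves implicit.
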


\begin{proof}
It follows from the considerations of \cref{sec:DecompLem} that $\cA$ and $\cA'$
are both deformation quantisations of $A = \Sym(\fv)$.
Further, since $\cA^\heartsuit$ and $\cA'^\heartsuit$ can be identified with the
respective quotients by $\hbar - 1$ of the $\CC*$\nb-finite parts of
\begin{align*}
& \power{\Cpoly{\vS_{\chi_1}}^\wedge_{\chi_2}}{\hbar} & \text{and}\qquad &
\power{\Cpoly{\vS_{\chi_2}}^\wedge_{\chi_2}}{\hbar} \widehat{\otimes}_{\Cpower{\hbar}}
\power{\Cpoly{V^*}^\wedge_0}{\hbar},
\end{align*}
the isomorphism $\Phi_\hbar$ of \cref{thm:DecompLem} provides the necessary
map~$\Phi$.
This satisfies the hypotheses of \cref{prop:los5.1} by the same considerations
as before (cf.~\cite[Corollary~3.3.2]{Los:QuantSymplWalg}).
\end{proof}

\subsection{Category equivalences}
\label{sec:CatEquiv}

Finally, we shall use this machinery to develop the category equivalences we
need.
First, there is an equivalence
\begin{align*}
\mathcal{K'} \colon \Whit(\cA',\fm) & \to \smash{\widetilde{\cO}(e_2, \fp)},
& \mathcal{K'}(M) & \coloneqq \pp[\big]{M}^{\tfm \cap V}.
\end{align*}
To see that the image lies in $\smash{\widetilde{\cO}(e_2, \fp)}$, it suffices
to note that $U(\fg,e_2)_{>0}$ is generated by the strictly positive eigenspaces
of $\ad \theta$, all of which lie in $\fm$ by construction.
That this functor is an equivalence follows from results on representations of
Heisenberg algebras, and the fact that $\smash{\tfm \cap V}$ is a
Lagrangian subspace of $V$.

Combining this with the equivalence of \cref{cor:los5.2} yields the following
\namecref{thm:CatOEquiv}, which is the main result of this
\namecref{chap:CatOWalg} and generalises \cref{thm:CatOWalgEquiv}.

\begin{thm}
\label{thm:CatOEquiv}
There exists an equivalence of categories
\begin{equation*}
\mathcal{K} \colon \Wh(U(\fg,e_1),\fm) \to \widetilde{\cO}(e_2,\fp).
\end{equation*}
Furthermore, $\mathcal{K}$ induces the following embeddings of categories~$\cO$,
along with their block decompositions:
\begin{align*}
\widetilde{\cO}(e_2,\fp)   & \hookrightarrow \widetilde{\cO}(e_1,\fp) &
\widehat{\cO}(e_2,\fp)     & \hookrightarrow \widehat{\cO}(e_1,\fp) \\
\cO(e_2,\fp)               & \hookrightarrow \cO(e_1,\fp) &
\cO'\mspace{-4mu}(e_2,\fp) & \hookrightarrow \cO'\mspace{-4mu}(e_1,\fp).
\end{align*}
%This equivalence preserves the subcategory on which $\ft$ acts semisimply, and
%also the subcategory on which $U(\fg,e_1)
%Furthermore, $\Wh(U(\fg,e_1),\fm)$ can be regarded as a subcategory of
%$\widetilde{\cO}(e_1,\fp)$, and so there is an inclusion of categories
%$\widetilde{\cO}(e_2,\fp) \hookrightarrow \widetilde{\cO}(e_1,\fp)$.
\end{thm}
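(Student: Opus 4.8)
The plan is to realise $\mathcal{K}$ as a composite of two equivalences that are already in place, and then to produce the claimed embeddings by identifying $\Wh(U(\fg,e_1),\fm)$ with a \emph{full subcategory} of $\widetilde{\cO}(e_1,\fp)$. For the equivalence itself: by \cref{lem:MyLos5.1} the map $\Phi \colon \cA^\heartsuit \to \cA'^\heartsuit$ satisfies the hypotheses of \cref{prop:los5.1}, so I would first invoke \cref{prop:los5.1} to extend it to a topological algebra isomorphism $\cA^\wedge \to \cA'^\wedge$ carrying $\cA^\wedge\fm$ onto $\cA'^\wedge\fm$, and then \cref{cor:los5.2} to obtain an equivalence $\Phi_* \colon \Wh(U(\fg,e_1),\fm) \to \Whit(\cA',\fm)$ which preserves the subcategories on which $\ft$ acts semisimply and commutes with the functor of $\fm$\nb-invariants. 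Composing with the Heisenberg\nb-invariants equivalence $\mathcal{K'} \colon \Whit(\cA',\fm) \to \widetilde{\cO}(e_2,\fp)$, $\mathcal{K'}(M) = M^{\tfm\cap V}$, recorded above then gives $\mathcal{K} \coloneqq \mathcal{K'}\circ\Phi_* \colon \Wh(U(\fg,e_1),\fm) \to \widetilde{\cO}(e_2,\fp)$.

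Next I would verify that $\Wh(U(\fg,e_1),\fm)$ is a full subcategory of $\widetilde{\cO}(e_1,\fp)$. Just as in the proof that $\mathcal{K'}$ lands in $\widetilde{\cO}(e_2,\fp)$, the algebra $U(\fg,e_1)_{>0}$ is generated by its strictly positive $\ad\theta$\nb-eigenspaces, which are spanned by the images of $\fg_{>0}\cap\liez[\fg]{e_1}$; and this last subspace already lies in $\fm$, because $\chi_2$ vanishes on $\fg_{>0}$ (as $e_2\in\fl$ and the Killing form pairs the zero $\ad\theta$\nb-eigenspace trivially with the strictly positive ones), so $\fg_{>0}\cap\liez[\fg]{e_1}$ sits unshifted in $\fv$ and hence in $\tfm_\chi\cap\fv = \fm$. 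Therefore $U(\fg,e_1)_{>0}$ acts by locally nilpotent endomorphisms on every object of $\Wh(U(\fg,e_1),\fm)$, i.e.~$(\cO1')$ holds, and $\Wh(U(\fg,e_1),\fm)$ is precisely the full subcategory of $\widetilde{\cO}(e_1,\fp)$ of modules on which $\fm$ in addition acts locally nilpotently. The embedding $\widetilde{\cO}(e_2,\fp)\hookrightarrow\widetilde{\cO}(e_1,\fp)$ is then a quasi\nb-inverse of $\mathcal{K}$ followed by this inclusion; being an equivalence followed by a fully faithful inclusion, it is a fully faithful embedding.

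It then remains to check that this embedding restricts along \cref{cond:wOfd,cond:wOtss,cond:wOZss}, compatibly with the block decompositions by the central characters of $Z(\fg,e_1)$ and $Z(\fg,e_2)$. Semisimplicity of the $\ft$\nb-action (\cref{cond:wOtss}) is preserved by $\Phi_*$ by \cref{cor:los5.2}, by $\mathcal{K'}$ (it is a Fock functor), and trivially by the inclusion into $\widetilde{\cO}(e_1,\fp)$. For finiteness of generalised weight spaces (\cref{cond:wOfd}, equivalently $(\cO2')$) I would use that the decomposition $\fv\simeq\liez[\fg]{e_2}\oplus V$ of \cref{thm:DecompLem} is compatible with both the $\ft$\nb-grading and the $\ad\theta$\nb-grading, while passing to invariants over the Lagrangian $\tfm\cap V$ replaces the Weyl\nb-algebra factor $\Weyl{V}$ by a one\nb-dimensional ground state, so that the space of $U(\fg,e_1)_{>0}$\nb-invariants in a module of $\Wh(U(\fg,e_1),\fm)$ and the space of $U(\fg,e_2)_{>0}$\nb-invariants in its image under $\mathcal{K}$ are identified, matching $(\cO2')$. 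Finally, $Z(\cA') = Z(\Weyl{V}\otimes U(\fg,e_2)) = Z(U(\fg,e_2))$, and combining part~(3) of \cref{thm:DecompLem} with Premet's isomorphisms $Z(\fg,e_1)\cong Z(\fg)\cong Z(\fg,e_2)$ shows that $\Phi$ intertwines central characters; hence the embedding carries the $\xi$\nb-block of $\widetilde{\cO}(e_2,\fp)$ into the $\xi$\nb-block of $\widetilde{\cO}(e_1,\fp)$ for every generalised central character $\xi$ of $Z(\fg)$, and in particular it restricts to the embeddings $\widehat{\cO}(e_2,\fp)\hookrightarrow\widehat{\cO}(e_1,\fp)$, $\cO(e_2,\fp)\hookrightarrow\cO(e_1,\fp)$ and $\cO'\mspace{-4mu}(e_2,\fp)\hookrightarrow\cO'\mspace{-4mu}(e_1,\fp)$.

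The main obstacle I anticipate is this last step. Loseu's machinery carries the Whittaker side of the picture across essentially verbatim, but checking that \cref{cond:wOfd} and — above all — the central\nb-character block decomposition survive the passage from the $e_1$\nb- to the $e_2$\nb-picture forces one to keep track of the auxiliary factor $\Weyl{V}$ and of the grading by $\ad h'_2 - m\ad\theta$ simultaneously; in particular one must pin down that the Slodowy\nb-slice embedding of \cref{thm:SlodowyEmbedding} is compatible with the adjoint\nb-quotient maps $\vS_{\chi_i}\to\fg^*/\!/ G$, so that the quantisation $\Phi$ respects the central subalgebras that Premet identifies with $Z(\fg)$.
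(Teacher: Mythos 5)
Your proposal is correct and follows essentially the same route as the paper: the equivalence is defined as $\mathcal{K} = \mathcal{K}'\circ\Phi_*$, assembled from \cref{lem:MyLos5.1}, \cref{prop:los5.1} and \cref{cor:los5.2}, and the embeddings come from realising the domain of $\mathcal{K}$ inside $\widetilde{\cO}(e_1,\fp)$. The only real divergence is presentational. The paper exhibits the embedding via the three-row commutative diagram \cref{eq:CatEquivDiag}, in which the hook arrow
$\Whit\bigl(\Weyl{V_1}\otimes U(\fg,e_1),(\tfm_2)_{\chi_2}\bigr)\hookrightarrow\Whit\bigl(\Weyl{V_1}\otimes U(\fg,e_1),(\tfm_1)_{\chi_1}\bigr)$
(locally nilpotent for the bigger $\tfm_2$ implies the same for the smaller $\tfm_1$) is composed with the $\Phi_*$-equivalences and the Lagrangian-invariants functors along the rows; you instead argue directly that $\fv_{>0}\subseteq\fm$ forces every object of $\Whit(U(\fg,e_1),\fm)$ to satisfy ($\cO$1'), so that $\Whit(U(\fg,e_1),\fm)$ sits as a full subcategory of $\widetilde{\cO}(e_1,\fp)$ and the embedding is a quasi-inverse of $\mathcal{K}$ composed with this inclusion. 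These describe the same embedding; your version avoids the diagram chase, which is arguably cleaner, while the paper's diagram has the side benefit of making visible how the construction sits over Loseu's original equivalence $\Whit(U(\fg),(\tfm_1)_{\chi_1})\simeq\widetilde{\cO}(e_1,\fp)$. One small miscitation: part (3) of \cref{thm:DecompLem} concerns the embeddings of $\mathfrak{q}=\Lie(Q)$, not of the centres, so it is not quite what carries the central-character block decomposition across; the paper itself handles this informally by simply asserting that $\mathcal{K}$ intertwines the actions of $\ft$ and $Z(\fg,e)$, so this is a step left loose in both treatments rather than a gap specific to yours.
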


\begin{proof}
The equivalence $\mathcal{K}$ is defined to be $\mathcal{K'} \circ \Phi_*$.
Let $\fm_1$ and $\fm_2$ be as in \cref{sec:SlodowyReduction},
$V_1 \coloneqq [\fg,f_1]$, and $V$ and $\fm$ be as above.
These functors can then be arranged into a commutative diagram.
\NewDocumentCommand \ots {} {\mspace{-8mu}\otimes\mspace{-3mu}}

\begin{equation}
\begin{tikzcd}[column sep = small]
\Whit(U(\fg), \pp{\tfm_1}_{\chi_1})
        \ar{r}{\sim}
  & \Whit( \Weyl{V_1} \ots U(\fg,e_1), \pp{\tfm_1}_{\chi_1} )
        \ar{r}{\sim}  %"\pp{\cdot}^{\tfm_1 \cap V_1}"]
  & \widetilde{\cO}(e_1, \fp) \\
\Whit( \Weyl{V_1} \ots U(\fg,e_1), \pp{\tfm_2}_{\chi_2} )
        \ar{r}{\sim}
        \ar[hookrightarrow]{ur}
        \ar{d}{\pp{\cdot}^{\tfm_2 \cap V_1}}[swap]{\sim}
  & \Whit( \Weyl{V_1 \oplus V} \ots U(\fg,e_2), \pp{\tfm_2}_{\chi_2} )
        \ar{r}{\sim}  %"\pp{\cdot}^{\tfm_2 \cap (V_1 \oplus V)}"]
        \ar{d}{\pp{\cdot}^{\tfm_2 \cap V_1}}[swap]{\sim}
  & \widetilde{\cO}(e_2, \fp) \\ %\ar[hook]{u} \\
\Whit( U(\fg,e_1), \fm)
        \ar{r}{\sim}[swap]{\Phi_*}
  & \Whit( \Weyl{V} \ots U(\fg,e_2), \fm)
        \ar{r}{\sim}[swap]{\mathcal{K'}}   %"\pp{\cdot}^{\tfm'_2 \cap V}"]
  & \widetilde{\cO}(e_2, \fp) \ar[-, double equal sign distance]{u}
\end{tikzcd}
\label{eq:CatEquivDiag}
\end{equation}

Here, the equivalences between the first two columns of each row are the
functors $\Phi_*$ of \cref{cor:los5.2} in the appropriate settings.
The functors between the second and third columns are the appropriate analogues
of the functor $\mathcal{K'}$, taking invariants with respect to
$\tfm_1 \cap V_1$, $\tfm_2 \cap (V_1 \oplus V)$ and $\tfm \cap V$, respectively.
Since these are all respective Lagrangian subspaces of $V_1$, $V_1 \oplus V$ and $V$,
it follows that they are equivalences.

From the diagram, it can be seen that there is an embedding of categories~$\cO$,
\begin{equation*}
\widetilde{\cO}(e_2,\fp) \hookrightarrow \widetilde{\cO}(e_1,\fp).
\end{equation*}
Since the functor $\mathcal{K}$ intertwines the actions of $\ft$ and $Z(\fg,e)$,
it induces an embedding of each of the above subcategories, and also their block
decompositions with respect to generalised central characters.
\end{proof}

\bibliographystyle{h-amsalphanum}

\begin{thebibliography}{MMO{\etalchar{+}}}

\bibitem[BGK]{BGK:HWTWAlg}
Jonathan Brundan, Simon Goodwin, and Alexander Kleshchev, \emph{Highest weight
  theory for finite {W}\nb-algebras}, Int. Math. Res. Notices \textbf{11}
  (2008), Art.~ID rnn051, 53pp,
  \href{http://arxiv.org/abs/0801.1337}{{\ttfamily arXiv:0801.1337 [math.RT]}}.

\bibitem[EK]{EK:ClassGG}
Alexander Elashvili and Victor Kac, \emph{Classification of good gradings of
  simple {Lie} algebras}, Lie groups and invariant theory (E.B. Vinberg, ed.),
  Amer. Math. Soc. Transl., vol. 213, AMS, 2005, pp.~85--104,
  \href{http://arxiv.org/abs/math-ph/0312030}{{\ttfamily arXiv:math-ph/0312030
  [physics.math-ph]}}.

\bibitem[GG]{GG:QuantSlod}
Wee~Liang Gan and Victor Ginzburg, \emph{Quantization of {Slodowy} slices},
  Int. Math. Res. Notices \textbf{5} (2002), 243--255,
  \href{http://arxiv.org/abs/math/0105225}{{\ttfamily arXiv:math/0105225
  [math.RT]}}.

\bibitem[Los1]{Los:QuantSymplWalg}
Ivan Loseu, \emph{Quantized symplectic actions and {W}\nb-algebras}, J. Amer.
  Math. Soc. \textbf{23} (\noopsort{2010a}2010), 35--59,
  \href{http://arxiv.org/abs/0707.3108}{{\ttfamily arXiv:0707.3108 [math.RT]}}.

\bibitem[Los2]{Los:FinRepWAlg}
\bysame, \emph{Finite dimensional representations of {W}\nb-algebras}, Duke
  Math. J. \textbf{159} (\noopsort{2010b}2011), no.~1, 99--143,
  \href{http://arxiv.org/abs/0807.1023}{{\ttfamily arXiv:0807.1023 [math.RT]}}.

\bibitem[Los3]{Los:CatOWAlg}
\bysame, \emph{On the structure of category~{$\cO$} for {W}\nb-algebras},
  S\'{e}min. Congr. \textbf{25} (\noopsort{2010c}2010), 351--368,
  \href{http://arxiv.org/abs/0812.1584}{{\ttfamily arXiv:0812.1584 [math.RT]}}.

\bibitem[MMO{\etalchar{+}}]{MMO:HamRed}
Jerrold~E. Marsden, Gerard Misiołek, Juan-Pablo Ortega, Matthew Perlmutter,
  and Tudor~S. Ratiu, \emph{Hamiltonian reduction by stages}, Lecture notes in
  mathematics, vol. 1913, Springer, 2007.

\bibitem[Pre]{Pre:TransSlice}
Alexander Premet, \emph{Special transverse slices and their enveloping
  algebras}, Adv. in Math. \textbf{170} (2002), 1--55, with an appendix by
  Serge Skyrabin.

\bibitem[Web]{Web:CatOWAlg}
Ben Webster, \emph{Singular blocks of parabolic category~{$\cO$} and finite
  {W}\nb-algebras}, J. Pure Appl. Algebra \textbf{215} (2011), no.~12,
  2797--2804, \href{http://arxiv.org/abs/0909.1860}{{\ttfamily arXiv:0909.1860
  [math.RT]}}.

\end{thebibliography}
\newcommand{\etalchar}[1]{$^{#1}$}
\newcommand{\noopsort}[1]{}
\providecommand{\bysame}{\leavevmode\hbox to3em{\hrulefill}\thinspace}
\providecommand{\MR}{\relax\ifhmode\unskip\space\fi MR }
% \MRhref is called by the amsart/book/proc definition of \MR.
\providecommand{\MRhref}[2]{%
  \href{http://www.ams.org/mathscinet-getitem?mr=#1}{#2}
}
\providecommand{\href}[2]{#2}

\end{document}